\renewcommand{\epsilon}{\varepsilon}
\numberwithin{equation}{section}
\newtheoremstyle{thmlemcorr}{10pt}{10pt}{\itshape}{}{\bfseries}{.}{10pt}{{\thmname{#1}\thmnumber{ #2}\thmnote{ (#3)}}}
\newtheoremstyle{thmlemcorr*}{10pt}{10pt}{\itshape}{}{\bfseries}{.}\newline{{\thmname{#1}\thmnumber{ #2}\thmnote{ (#3)}}}
\newtheoremstyle{defi}{10pt}{10pt}{\itshape}{}{\bfseries}{.}{10pt}{{\thmname{#1}\thmnumber{ #2}\thmnote{ (#3)}}}
\newtheoremstyle{remexample}{10pt}{10pt}{}{}{\bfseries}{.}{10pt}{{\thmname{#1}\thmnumber{ #2}\thmnote{ (#3)}}}
\newtheoremstyle{ass}{10pt}{10pt}{}{}{\bfseries}{.}{10pt}{{\thmname{#1}\thmnumber{ A#2}\thmnote{ (#3)}}}
\theoremstyle{thmlemcorr}
\newtheorem{theorem}{Theorem}
\numberwithin{theorem}{section}
\newtheorem{lemma}[theorem]{Lemma}
\newtheorem{proposition}[theorem]{Proposition}
\theoremstyle{thmlemcorr*}
\newtheorem{theorem*}{Theorem}
\newtheorem{lemma*}[theorem]{Lemma}
\newtheorem{corollary*}[theorem]{Corollary}
\newtheorem{proposition*}[theorem]{Proposition}
\newtheorem{problem*}[theorem]{Problem}
\newtheorem{conjecture*}[theorem]{Conjecture}
\theoremstyle{defi}
\newtheorem{definition}[theorem]{Definition}
\theoremstyle{remexample}
\newtheorem{remark}[theorem]{Remark}
\theoremstyle{ass}
\newcommand{\ZZ}{\mathbb{Z}}
\newcommand{\Acal}{\mathcal{A}}
\newcommand{\Bcal}{\mathcal{B}}
\newcommand{\Hcal}{\mathcal{H}}
\newcommand{\Lcal}{\mathcal{L}}
\newcommand{\Mcal}{\mathcal{M}}
\newcommand{\Ibb}{\mathbb{I}}
\newcommand{\NN}{\mathbb{N}}
\newcommand{\norm}[1]{\|#1\|}
\newcommand{\abs}[1]{|#1|}
\newcommand{\dd}{\;\mathrm{d}}
\newcommand{\N}{\mathbb{N}}
\newcommand{\R}{\mathbb{R}}
\newcommand{\RR}{\mathbb{R}}
\newcommand{\Z}{\mathbb{Z}}
\newcommand{\weakly}{\rightharpoonup}
\newcommand{\weaklystar}{\overset{*}\rightharpoonup}
\newcommand{\tostar}{\overset{*}\rightarrow}
\newcommand{\eps}{\epsilon}
\newcommand{\epsi}{\epsilon}
\newcommand{\ffi}{\varphi}
\newcommand{\Yrig}{Y_{\rm rig}}
\newcommand{\Ysoft}{Y_{\rm soft}}
 \definecolor{Korange}{rgb}{0.945,0.561,0}
\title[Homogenization problems in BV]{Homogenization in \boldmath{$BV$} of a model for layered composites in finite crystal plasticity}
\author[Elisa Davoli]{Elisa Davoli}
\address{University of Vienna, Faculty of Mathematics, Oskar-Morgenstern-Platz 1, 1090 Vienna, Austria}
\email{elisa.davoli@univie.ac.at}
\author[Rita Ferreira]{Rita Ferreira}
\address{King Abdullah University of Science and Technology (KAUST), CEMSE Division, Thuwal 23955-6900, Saudi Arabia}
\email{rita.ferreira@kaust.edu.sa}
\author[Carolin Kreisbeck]{Carolin Kreisbeck}
\address{Mathematisch Instituut, Universiteit Utrecht, Postbus 80010, 3508 TA Utrecht, The Netherlands}
\email{C.Kreisbeck@uu.nl}
\begin{document}

 
\maketitle

 \begin{abstract}  
 \vspace{-12pt}   

In this work,  we study the effective  behavior of a two-dimensional variational model within finite crystal plasticity for high-contrast bilayered composites. Precisely, we consider materials arranged into periodically alternating thin horizontal strips of an elastically rigid component and a softer one with one active slip system. The energies arising from these modeling assumptions are of integral form, featuring linear growth and non-convex differential constraints.
 We approach this non-standard homogenization problem via Gamma-convergence.  A crucial first step in the asymptotic analysis  is the characterization of rigidity properties of limits of admissible deformations in the space $BV$ of functions of bounded variation. In particular, we prove that, under suitable assumptions, the two-dimensional body may split horizontally into finitely many pieces,  each of which undergoes  shear deformation and global rotation. This allows us to identify a potential candidate for the homogenized limit energy, which we show  
to be a lower bound on the Gamma-limit. 
In the framework of non-simple materials, we present a complete Gamma-convergence result, including an explicit homogenization formula, for a regularized model with an anisotropic penalization in the layer direction.

\vspace{8pt}

 \noindent\textsc{MSC (2010):} 49J45 (primary); 74Q05, 74C15, 26B30
 
 \noindent\textsc{Keywords:} homogenization, $\Gamma$-convergence,  linear growth, composites,  finite crystal plasticity,  non-simple materials.

 \noindent\textsc{Date:} \today.
 \end{abstract}

\section{Introduction} 
Metamaterials are artificially engineered composites whose  heterogeneities  are optimized  to improve structural performances. Due to their special mechanical properties, arising as a result of complex microstructures, metamaterials play a key role in industrial applications and are an increasingly active field of research. Two natural questions when dealing with composite materials are how the  effective material response  is influenced by the geometric distribution of its components, and how the mechanical properties of the components impact the overall  macroscopic behavior  of the metamaterial.

 In what follows, we investigate these questions for a special class of metamaterials with two characteristic features that are of relevance in a number of applications: (i) the material consists of two components arranged in a highly anisotropic way into periodically alternating layers, and (ii) the (elasto)plastic properties of the two components exhibit strong differences, in the sense that one is rigid, while the other one is considerably softer, allowing for large (elasto)plastic deformations. 
The analysis of variational models for such layered high-contrast materials was initiated in \cite{ChK17}. 
There, the authors derive a macroscopic description for a two-dimensional model in the context of geometrically nonlinear but rigid elasticity, assuming that the softer component can be deformed along a single active slip system with linear self-hardening. 
 
These results have been extended to general dimensions,  to energy densities with $p$-growth for $1<p<+\infty$, and to the case  with non-trivial elastic energies, which allows  treating very stiff  (but not necessarily rigid) layers,  see~\cite{ChK18, Chr18}. 

 In this paper, we carry the ideas of~\cite{ChK17} forward to a model for plastic composites without linear hardening, in the spirit of~\cite{CoT05}. This change turns the variational problem in~\cite{ChK17}, having  quadratic growth (cf.~also~\cite{Con06, CDK11}), into one with energy densities that grow merely linearly. 

The main novelty lies in the fact that the homogenization analysis  must be performed  in the class $BV$ of functions of bounded variation (see \cite{AFP00})  to account for concentration phenomena. This  gives rise to conceptual  mathematical difficulties: on the one hand, the standard convolution techniques commonly used for density arguments in $BV$ or $SBV$ cannot be directly applied because they do not
preserve the intrinsic constraints of the problem; on the other hand, constraint-preserving approximations in this weaker setting of $BV$ are rather challenging, as one needs to simultaneously regularize the absolutely continuous part of the distributional
derivative of the functions and  accommodate their jump sets. \\

  To state our results precisely, we first introduce the relevant model with its main modeling hypotheses. Throughout the article, we  analyze two versions of the model, namely with and without regularization.

Let $e_1$ and $e_2$ be the standard unit vectors in $\mathbb{R}^2$, and let    $x=(x_1,x_2)$ denote a generic point in $\mathbb{R}^2$. %
Unless specified otherwise,  $\Omega\subset\R^2$ is an $x_1$-connected,  bounded  domain  with Lipschitz boundary, that is, an open set whose slices in the $x_1$-direction are (possibly empty) open intervals (see Subsection \ref{subs:geom} for the precise definition).  For such a domain $\Omega$, we set 
\begin{align}\label{aOmega}
\text{$a_\Omega:=\inf_{x\in \Omega} x_2$ \qquad and \qquad $b_\Omega:=\sup_{x\in \Omega} x_2$,}
\end{align} 
as well as
\begin{align}\label{cOmega}
\text{$c_\Omega:=\inf_{x\in \Omega} x_1$ \qquad and \qquad $d_\Omega:=\sup_{x\in \Omega} x_1$.}
\end{align}
Assume that $\Omega$ is the reference configuration  of a body with heterogeneities in the form of periodically alternating thin horizontal layers.  
To describe the bilayered structure mathematically, consider the periodicity cell $Y:=[0,1)^2$,  which we  subdivide into 
$Y=\Ysoft\cup \Yrig$ with $\Ysoft:=[0,1)\times [0, \lambda)$ for $\lambda\in (0,1)$ and $\Yrig:=Y\setminus \Ysoft$. All sets are extended by periodicity to  $\R^2$.  The (small) parameter $\eps>0$ describes the thickness of a pair (one rigid, one softer) of fine layers,  and can be viewed as the intrinsic length scale of the system. The  collections of all rigid and soft layers in $\Omega$ can be expressed as $\eps\Yrig\cap\Omega$ and $\eps\Ysoft\cap\Omega$, respectively.  For an illustration of the geometrical assumptions,  see Figure \ref{fig:material}.

\begin{center}
\begin{figure}[h]
\begin{tikzpicture}[scale=1.5]
  \fill [Korange!20] plot [smooth, tension=0.65] coordinates
{(4., -.75)     (3.75, -.25) (2, 0)   
         (.85, -.15)      (.3,  -.5)   (.25, -.75)
        (.4, -1) (.6, -1.25) 
        (0, -1.35) (-.3, -1.5)
        (.1, -1.9)  (1, -2) (2.1, -1.9)  (3.5, -1.5) (4., -.75)};
  \draw(.6,-1.7) node      
        { \begin{normalsize}$\textcolor{black}{\Omega \subset
\RR^2}$\end{normalsize}};       
               \draw[black, thick,
        densely dotted] (2,-1) -- (2.6,-1) -- (2.6, -1.25) --
        (2, -1.25) -- (2,-1);
       \draw[black, thick,
        densely dotted] (.9+4,-3+2.5) -- (3.5+4,-3+2.5) -- (3.5+4,
-4.25+2.5) --         (.9+4, -4.25+2.5) -- (.9+4,-3+2.5);
        \draw[thick,gray!70,
         dashed] (.9+4,-3+2.5) -- (2,-1);
        \draw[thick,gray!70,
         dashed] (2.,-1.25) -- (.9+4,-4.25+2.5);
         \foreach \y in {-4.25+2.5, -4+2.5, -3.75+2.5, -3.5+2.5,
-3.25+2.5} {\fill[fill=Korange!40] (0.9+4,\y+.1) -- (3.5+4,\y+.1)
-- (3.5+4,\y+.25) -- (.9+4,\y+.25) -- cycle;};
 \draw[densely dashed, |-|] (3.6+4,-4.25+2.5) --      (3.6+4,-4+2.5);
  \draw(3.7+4.02,-4.13+2.5) node      
        {\begin{normalsize}$\textcolor{black}{\varepsilon}$\end{normalsize}}; \end{tikzpicture}

\vskip6mm

\begin{tikzpicture}[scale=.8]
%
%

       \draw[black,
        densely dotted] (1,-3) -- (3.5,-3) -- 
        (3.5, -5.5) --
        (1, -5.5) -- (1,-3);       
        
  \fill[fill=Korange!50]
(1,-3) -- (3.5,-3) -- (3.5,-4.5) -- (1,-4.5) -- cycle;      
        
  \draw(1.5,-3.4) node      
        { \begin{normalsize}$\textcolor{black}
        {Y_{\rm rig}} $\end{normalsize}};

 \draw(1.5,-4.9) node      
        { \begin{normalsize}$\textcolor{black}
        {Y_{\rm soft}} $\end{normalsize}};

  \draw(2.25,-6) node      
        { \begin{normalsize}$\textcolor{black}
        {Y=[0,1)^2} $ {reference cell} \end{normalsize}}; 
       
 \draw[densely dashed, |-|] (3.65,-5.5) --      (3.65,-4.5);
  \draw(3.85,-5) node      
        {\begin{normalsize}$\textcolor{black}{\lambda}$\end{normalsize}};
%
 
  \draw(2.8,-5.2) node      
        {\begin{small}$\textcolor{blue}
        {\underrightarrow{  s=e_1  }}$\end{small}};

\end{tikzpicture}
\caption{A bilayered $x_1$-connected domain \(\Omega\)}\label{fig:layeredmaterial}
\label{fig:material}
\end{figure}
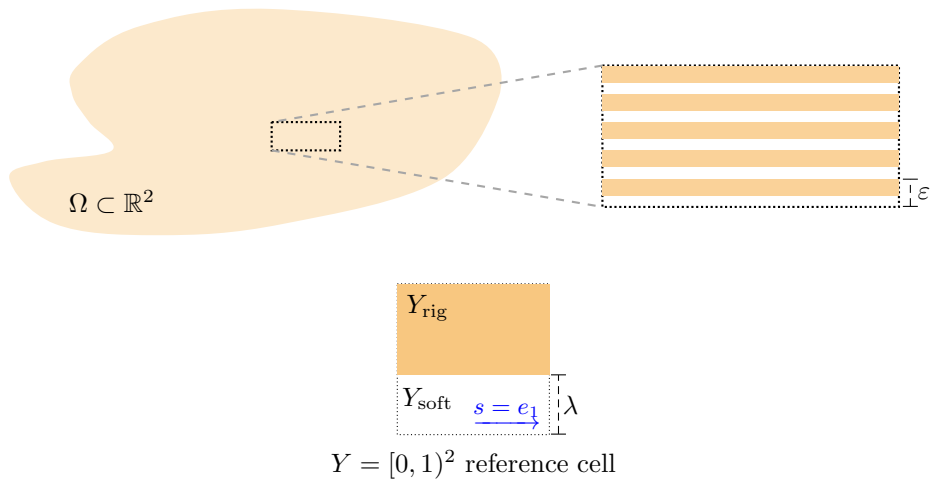
\end{center}

Following the classical theory of elastoplasticity at finite strains (see, e.g., \cite{hill} for an overview), we assume  that  the gradient of any deformation $u:\Omega\to \mathbb{R}^2$ decomposes into the product of an elastic strain, $F_{\rm el}$, and a plastic one, $F_{\rm pl}$. In the literature, different models of finite plasticity have been proposed (see, e.g., \cite{amstutz.vangoethem, davoli.francfort, grandi.stefanelli1, grandi.stefanelli2, naghdi}), as well as alternative descriptions via the theory of structured deformations (see \cite{ChDPFoOw99, ChFo97, DPOw93, BaMaMoOw17} and the references therein).   Here, we adopt the classical model by Lee on finite crystal plasticity introduced in \cite{lee, mielke, mielke2}, according to which the  deformation gradients  satisfy   
\begin{align}\label{multi_decomposition}
\nabla u=F_{\rm el}F_{\rm pl}.
\end{align}
 In addition, we suppose  that 
the elastic behavior of the body is purely rigid,  meaning  that 
\begin{align}\label{Fel}
\text{$F_{\rm el}\in SO(2)$ almost everywhere in $\Omega$,}
\end{align} 
and that the plastic part satisfies 
\begin{align}\label{Fpl}
F_{\rm pl}=\Ibb+\gamma s\otimes m,
\end{align}
 where $s\in \mathbb{R}^2$ with $|s|=1$ is the slip direction of the slip system, $m=s^{\perp}$ is the normal to the slip plane, and the map $\gamma$ measures the amount of slip. Denoting by $\Mcal_s$ the set
$$\Mcal_{s}:=\{F\in \R^{2\times 2}: \det F=1\text{ and $|Fs|
= 1$}\},$$
the multiplicative decomposition~\eqref{multi_decomposition} (under assumptions~\eqref{Fel} and~\eqref{Fpl}) is equivalent to $\nabla u\in \Mcal_s$ almost everywhere in $\Omega$. 
Whereas the material is free to glide along the slip system in the softer phase, it is required that $\gamma$ vanishes on the layers consisting of
a rigid material, i.e.,~$\gamma=0$ in $\eps\Yrig\cap \Omega$.

 Collecting the previous modeling assumptions, we define,  for $\eps>0$, the class $\Acal_\eps$ of \emph{admissible
layered deformations} by
 \begin{align}\label{eq:def-Aep}
\Acal_\eps& := \{ u\in W^{1,1}(\Omega;\R^2): \nabla u \in \Mcal_{s}
\text{ a.e.~in $\Omega$, $\nabla u\in SO(2)$ a.e.~in $\eps\Yrig\cap
\Omega$} \} \nonumber\\&\ =
\{ u\in W^{1,1}(\Omega;\R^2): \nabla u = R(\Ibb+\gamma s\otimes m)
\text{ a.e.~in $\Omega$,} \\ &\hspace{3.5cm} \text{ $R\in L^\infty(\Omega;SO(2))$ and  $\gamma\in L^1(\Omega)$ with $\gamma=0$ a.e.~in $\eps\Yrig\cap
\Omega$} \}. \nonumber
 \end{align}

The elastoplastic energy of a deformation $u\in L_0^1(\Omega;\R^2):=\{u\in L^1(\Omega;\R^2): \int_\Omega u\dd{x}=0\}$, given by
\begin{align}
E_\eps(u) 
&=\begin{cases}
\displaystyle \int_\Omega \abs{\gamma}\dd{x} & \text{for $u\in \Acal_\eps$,}
\\
\infty & \text{otherwise in } L^1_0(\Omega;\R^2), 
\end{cases}\label{eq:defEeps}
\end{align}           
represents the internal energy contribution of the system  during a single incremental step in a time-discrete variational description. This way of modeling excludes preexistent plastic distortions,
and can be considered a reasonable assumption for the first time step of a deformation process. The elastoplastic energy  can
be  complemented with terms modeling the work done by external body  or  surface forces. 

The limit behavior of sequences $(u_\eps)_\eps$ of low energy states for $(E_\eps)_\eps$ gives information about the macroscopic material response of the layered composites. 
 In the following, we  focus the analysis of this asymptotic behavior on the  $s=e_1$
case, when the slip direction is parallel to the orientation of the layers, cf.~also Figure~\ref{fig:layeredmaterial}. 
 Note that different slip directions can be treated  similarly, but the arguments are technically more involved. In fact, for $s\notin\{e_1, e_2\}$, small-scale laminate microstructures on the softer layers need to be taken into account, which requires an extra relaxation step. We refer to~\cite{CoT05} for the relaxation mechanism and to~\cite{ChK17} for the strategy of how to apply it to layered structures. \\

 An important first step towards identifying the limit behavior of the energies $(E_{\eps})_{\eps}$ (in the sense of $\Gamma$-convergence) is the proof of a general statement of asymptotic rigidity for layered structures in the context of functions of bounded variation.  The following result characterizes the weak$^\ast$ limits in $BV$ of  deformations whose gradients coincide pointwise with rotations on the rigid layers of the material.  Note that no additional constraints  are imposed on the softer components at this point.  
   
 \begin{theorem}[Asymptotic rigidity of layered structures in \boldmath{$BV$}]\label{thm:aymrigintro}
Let \(\Omega\subset\RR^2\) be an \(x_1\)-connected domain. 
Assume that $(u_\eps)_\eps\subset W^{1,1}(\Omega;\R^2)$ is a sequence satisfying
\begin{equation}
\label{eq:rigcond}
\begin{aligned}
\text{$\nabla u_\epsi\in
SO(2)$ 
a.e.\! in
$\eps\Yrig\cap \Omega$ for all $\eps$},
\end{aligned}
\end{equation}
  and  that   
$u_\eps\weaklystar u$ in $BV(\Omega;\R^2)$
for some $u\in BV(\Omega;\R^2)$  as $\eps\to 0$. Then,
\begin{equation}
\label{eq:rigcondlim}
\begin{aligned}
\text{
$u(x) =  R(x_2)x +\psi(x_2)$ for { $\Lcal^2$-} a.e. $x\in\Omega$,}
\end{aligned}
\end{equation}
where   $R\in
BV(a_\Omega,b_\Omega;SO(2))$ and $\psi\in BV(a_\Omega,b_\Omega;\R^2)$ (cf.~\eqref{aOmega}). 

 Conversely, any function
\(u\in BV(\Omega; \RR^2)\)  as in  \eqref{eq:rigcondlim}  can be attained as weak$^\ast$-limit in \(BV(\Omega; \RR^2)\) of a sequence  $(u_\eps)_\eps\subset W^{1,1}(\Omega;\R^2)$
satisfying \eqref{eq:rigcond}.   
\end{theorem}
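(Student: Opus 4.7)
The plan is to prove the two implications separately: the forward direction rests on a Liouville-type rigidity within each rigid strip, combined with a $W^{1,1}$-trace estimate across each soft strip to produce a uniform $BV$ bound on the associated piecewise-constant rotations; the converse is obtained by approximating $R$ and $\psi$ by piecewise constants and inserting transition layers inside the soft strips at the scale $\eps$.

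For the forward direction, by the $x_1$-connectedness of $\Omega$ every rigid slab $S_\eps^k:=\Omega\cap\{x_2\in[\eps(k+\lambda),\eps(k+1))\}$ is connected, and $\nabla u_\eps\in SO(2)$ a.e.\ on $S_\eps^k$ forces $u_\eps$ to be locally Lipschitz there, so the classical Liouville--Reshetnyak rigidity yields $u_\eps(x)=R_\eps^k x+b_\eps^k$ on $S_\eps^k$ with $R_\eps^k\in SO(2)$ and $b_\eps^k\in\R^2$. To control the jumps between consecutive strips, I compare the traces of $u_\eps$ on the top and bottom of each intermediate soft strip $T_\eps^k$ via the $W^{1,1}$-inequality: integrated over the $x_1$-slice of $\Omega$, the $L^1$-norm of the trace difference is bounded by $|Du_\eps|(T_\eps^k)$. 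Since this trace difference has the explicit form $(R_\eps^k-R_\eps^{k-1})x_1 e_1+C_k$, the elementary identity $\inf_B\int_c^d|ax_1+B|\,dx_1=|a|(d-c)^2/4$ gives $|R_\eps^k-R_\eps^{k-1}|\le C|Du_\eps|(T_\eps^k)$ on any subdomain where the slice lengths are bounded below, with an analogous control of $|b_\eps^k-b_\eps^{k-1}|$. Summing in $k$, the piecewise-constant functions $\tilde R_\eps(x_2):=R_\eps^{k(x_2)}$ and $\tilde b_\eps(x_2):=b_\eps^{k(x_2)}$ are uniformly bounded in $BV(a_\Omega,b_\Omega)$; one-dimensional $BV$-compactness then yields, along a subsequence, $\tilde R_\eps\to R$ and $\tilde b_\eps\to\psi$ in $L^1$ and a.e., with $R(x_2)\in SO(2)$ a.e.\ by closedness of $SO(2)$. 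Finally, a Poincar\'e estimate on each soft strip shows $\|u_\eps-\tilde R_\eps(x_2)x-\tilde b_\eps(x_2)\|_{L^1(\Omega)}\to 0$, so passing to the limit $u_\eps\to u$ delivers $u(x)=R(x_2)x+\psi(x_2)$ almost everywhere.

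For the converse, given $u$ of the required form, I approximate $R$ by piecewise-constant $R_n\colon(a_\Omega,b_\Omega)\to SO(2)$ taking the value of $R$ at a Lebesgue point of each interval of a dyadic partition, and $\psi$ by the analogous $\psi_n$, so that $R_n\to R$ and $\psi_n\to\psi$ in $L^1$ with $\limsup_n|DR_n|\le|DR|$ and $\limsup_n|D\psi_n|\le|D\psi|$. Given $\eps>0$, I shift each breakpoint of the partition by at most $\eps$ into one of the soft strips $\eps k+\eps[0,\lambda)$, which is possible by their uniform density in $\R$. On the portion of each piece lying outside the shifted breakpoints' soft strips I set $u_\eps^{(n)}(x):=R_n^k x+\psi_n^k$; across every soft strip containing a shifted breakpoint I use the convex combination $u_\eps^{(n)}(x):=(1-t)[R_n^{k-1}x+\psi_n^{k-1}]+t[R_n^kx+\psi_n^k]$ with $t\in[0,1]$ the affine parameter in $x_2$ across the strip, ensuring continuity. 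Then $u_\eps^{(n)}\in W^{1,1}(\Omega;\R^2)$ with $\nabla u_\eps^{(n)}\in SO(2)$ on every rigid strip, and a direct estimate on each transition soft strip (of height $\eps\lambda$) gives $|Du_\eps^{(n)}|(\Omega)\le C(1+|DR_n|+|D\psi_n|)$, uniform in $\eps$ and $n$. A diagonal extraction $\eps_n\to 0$ then yields $u_{\eps_n}^{(n)}\weaklystar u$ in $BV$.

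The hard part is the uniform $BV$ bound for $\tilde R_\eps$: the constant in the trace estimate is proportional to $1/\ell^2$ where $\ell$ is the slice length, which may degenerate near $\partial\Omega$, so the argument must be run first on interior sub-rectangles and then exhausted to cover $(a_\Omega,b_\Omega)$; identifying the limit $R$ as taking values in $SO(2)$ rather than in its convex hull crucially relies on the strong $L^1$ convergence provided by $BV$-compactness (and the closedness of $SO(2)$), rather than a weak passage to the limit which would only yield $|R(x_2)e_1|\le 1$. In the converse direction, the main technicality is the $O(\eps)$ shift of partition breakpoints into soft strips performed without destroying the $L^1$ or $BV$ approximations of $R$ and $\psi$.
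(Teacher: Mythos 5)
Your forward argument is essentially the paper's: Reshetnyak/Liouville rigidity on each rigid slab, the $\min_b\int_0^1|ta+b|\,dt=|a|/4$ estimate applied to the trace difference across soft strips, one-dimensional $BV$-compactness for the piecewise-constant rotations, Poincar\'e on soft strips, and then exhaustion/extension to handle a general $x_1$-connected domain (the paper runs the argument on $(0,1)\times(-1,1)$ and cites \cite[Lemma~A.2]{ChK18} for the exhaustion). The converse, however, takes a genuinely different route. The paper reduces it to a one-dimensional approximation lemma (Lemma~\ref{lem:approx_layers}): it first approximates $R$ and $\psi$ by smooth maps using strict-convergence density in $BV$ (and, for the $SO(2)$-constraint, the manifold-valued $BV$ density theorem of Giaquinta--Mucci), and then composes with a Lipschitz reparametrization $\tilde\ffi_\epsi$ that is ``stopped'' on $\eps I_{\rm rig}$ and ``accelerated'' on $\eps I_{\rm soft}$, followed by Attouch diagonalization; this yields Lipschitz approximants $R_\eps\in W^{1,\infty}(I;SO(2))$ and $\psi_\eps\in W^{1,\infty}(I;\R^2)$ with vanishing derivative on $\eps I_{\rm rig}$, from which $u_\eps(x)=R_\eps(x_2)x+\psi_\eps(x_2)$ is directly admissible. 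Your approach instead uses piecewise-constant approximations of $R$ and $\psi$ by Lebesgue-point values on a dyadic partition, shifts the breakpoints by $O(\eps)$ into soft strips, and bridges across each such strip by affine interpolation. Your route is more elementary in that it avoids the manifold-valued density result (piecewise-constant approximants automatically land exactly in $SO(2)$), but it trades this for the bookkeeping of the breakpoint shift and the explicit $BV$-bound on the transition layers; the paper's reparametrization trick is cleaner once the smooth $SO(2)$-valued approximants are in hand, and, importantly, delivers strict convergence $\int_I|w_\eps'|\to|Dw|(I)$ (not merely $\limsup\le$), which the paper reuses in the proof of Theorem~\ref{thm:SBVinfty} and the upper bound in Theorem~\ref{thm:Gamma_convergence}, whereas for Theorem~\ref{thm:aymrigintro} alone your $\limsup$-bound suffices.
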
  

 To prove the first part of Theorem~\ref{thm:aymrigintro}, we adapt  the arguments in \cite{ChK17} to the $BV$-setting.  The second assertion  follows from a tailored one-dimensional density result in $BV$, which involves approximating functions that  are constant  on the rigid layers (see Lemma \ref{lem:approx_layers} below). 
Up to minor adaptations, analogous  statements  hold in higher dimensions. We refer to Remark \ref{rk:high-dim} for the specific assumptions on the geometry of the set $\Omega$ under which a higher-dimensional counterpart of Theorem \ref{thm:aymrigintro} can be proved.\\

A natural potential candidate for the  limiting behavior of $(E_{\eps})_{\eps}$ in the sense of $\Gamma$-convergence (see \cite{Bra05,Dal93}  for an introduction, as well as the references therein) is the functional
$E: L^1_0(\Omega;\R^2) \color{black}\to [0,\infty]$,  given by
\begin{align}
E(u)= \begin{cases} \displaystyle\int_\Omega |\psi'\cdot Re_1| \dd{x} + \abs{D^su}(\Omega)
& \hbox{ if } u\in  \Acal,\\
\infty & \hbox{ otherwise,} 
\label{eq:defE}
\end{cases}
\end{align} 
where 
\begin{align}\label{eq:def-A}
\begin{array}{l}
\Acal:= \{u\in BV(\Omega;\R^2)\!:\, u(x) =  R(x_2)x +\psi(x_2) \text{ for a.e.\! $x\in\Omega$ with}\\[0.2cm]
\hspace{2cm} \text{$R\in BV(a_\Omega,b_\Omega;SO(2))$, $\psi\in BV(a_\Omega,b_\Omega;\R^2)$}, \text{ and }\det \nabla u=1 \text{ a.e.\! in } \Omega\}. 
\end{array}
\end{align}

We refer to Remark~\ref{rmk:formulationsofEe} for an alternative representation of the functional \(E\).

The next theorem states that $E$ provides indeed a lower bound  for our homogenization problem.

\begin{theorem}[Lower bound on the \boldmath{$\Gamma$}-limit of \boldmath{$(E_\eps)_\eps$}\color{black}]\label{theo:Gamma_convergence_new}
Let \(\Omega\subset\RR^2\) be an \(x_1\)-connected domain, and let \(E_\epsi\) and \(E\) be the functionals introduced in \eqref{eq:defEeps} and \eqref{eq:defE}, respectively.  
Then, every sequence \((u_\eps)_\eps\subset L^1_0(\Omega;\RR^2)\) with uniformly bounded energies,  \(\sup_\epsi E_\eps(u_\eps)<\infty\), has a subsequence that converges weakly$^\ast$ in $BV(\Omega;\RR^2)$ to some \(u\in\Acal \cap L^1_0(\Omega;\RR^2) \). Additionally,
\begin{align}\label{eq:GliminfEeps}
 \Gamma(L^1) \text{-}\liminf_{\eps\to 0} E_\eps \geq E.
\end{align}
\end{theorem}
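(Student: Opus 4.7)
I would decompose the proof into three parts: compactness, verification that the weak$^*$-limit belongs to $\Acal$, and the liminf inequality (the core of the argument).

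For compactness, observe that any $u_\eps\in\Acal_\eps$ satisfies $\nabla u_\eps = R_\eps(\Ibb+\gamma_\eps e_1\otimes e_2)$ with $R_\eps\in SO(2)$ a.e., whence $|\nabla u_\eps|^2 = 2+\gamma_\eps^2$ and $\|\nabla u_\eps\|_{L^1(\Omega)}\le\sqrt{2}\,|\Omega|+E_\eps(u_\eps)$. The zero-mean condition and the $BV$-Poincar\'e inequality yield a uniform $BV$-bound, and standard weak$^*$-compactness produces a subsequence with limit $u\in BV(\Omega;\R^2)\cap L^1_0$. Theorem~\ref{thm:aymrigintro} then delivers the structural form $u(x) = R(x_2)x+\psi(x_2)$ with $R\in BV(a_\Omega,b_\Omega;SO(2))$ and $\psi\in BV(a_\Omega,b_\Omega;\R^2)$.

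To secure $u\in\Acal$, it remains to verify $\det\nabla^a u = 1$ a.e. The first column of $\nabla u_\eps$ equals $\partial_{x_1}u_\eps=R_\eps e_1$, which is a unit vector bounded in $L^\infty$ and weakly$^*$-convergent to $\partial_{x_1}u = Re_1$; since $|Re_1|=1$ a.e., the norm is preserved in the limit and the convergence upgrades to strong in $L^p(\Omega;\R^2)$ for every $p<\infty$. Using the null-Lagrangian representation $\det\nabla u_\eps=\partial_{x_1}(u_\eps^1\partial_{x_2}u_\eps^2)-\partial_{x_2}(u_\eps^1\partial_{x_1}u_\eps^2)$ and passing distributional limits---with the strong $L^p$-convergence of $\partial_{x_1}u_\eps$ handling the cross terms and the weak$^*$ measure convergence of $\partial_{x_2}u_\eps\,dx$ closing the argument---one obtains $\det\nabla^a u = 1$ a.e. Together with the structural form and the identity $R'(x_2) = \theta'(x_2)R(x_2)J$ (where $J$ denotes the $\pi/2$ rotation), this forces the absolutely continuous part of $R'$ to vanish and $Re_2\cdot\psi'^{\rm ac}=1$ a.e.

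For the liminf, my starting point is the pointwise identity $\gamma_\eps = \partial_{x_1}u_\eps\cdot\partial_{x_2}u_\eps = R_\eps e_1\cdot\partial_{x_2}u_\eps$ a.e.\ on $\Omega$. Along a subsequence, $\gamma_\eps\,dx\weaklystar\Gamma$ as signed Radon measures, and the lower semicontinuity of the total variation gives $|\Gamma|(\Omega)\le\liminf_\eps\int_\Omega|\gamma_\eps|\,dx$; the task then reduces to showing $|\Gamma|(\Omega)\ge E(u)$. Combining the strong $L^p$-convergence $\partial_{x_1}u_\eps\to Re_1$ with the weak$^*$ convergence $\partial_{x_2}u_\eps\,dx\weaklystar D_{x_2}u$ as $\R^2$-valued Radon measures, a Reshetnyak-type lower semicontinuity argument---carried out by approximating the $BV$ direction field $Re_1$ by smooth unit vector fields and controlling the error via $|DR|$---identifies $\Gamma$ with an appropriately defined projection $Re_1\cdot D_{x_2}u$. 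The constraints from the determinant condition then turn the absolutely continuous part into $\int_\Omega|Re_1\cdot\psi'^{\rm ac}|\,dx$, while the singular part---concentrated on the horizontal lines where $R$ or $\psi$ jumps (or has a Cantor behavior)---contributes $|D^s u|(\Omega)$, once one verifies that $R_\eps e_1$ effectively aligns with the jumps of $u$ in the limit.

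The main obstacle is the identification of $\Gamma$ in the liminf step. A naive ``strong $L^p$ times weak$^*$ measure'' product argument fails, because the direction field $Re_1$ is only $BV$ (hence discontinuous) and the measures $\partial_{x_2}u_\eps\,dx$ may concentrate precisely on the jump set of $R$, so the two convergences are not decoupled. The remedy is to leverage the layered structure: the rigidity of $SO(2)$ forces $R_\eps$ to be constant on each connected rigid stripe, confining its oscillations to the thin soft layers; reconciling this geometry with the $BV$ regularity of the limiting direction field and with the singular part of $Du$ is the crux of the lower-bound proof.
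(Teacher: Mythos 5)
Your compactness step is structurally the same as the paper's, and the observation that $\partial_1 u_\eps = R_\eps e_1$ is uniformly bounded, unit-modulus, and weakly$^*$-convergent to the unit field $Re_1$ (hence strongly in $L^p$) is correct and is indeed exploited in the paper. However, two places in the proposal have real problems.

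\textbf{Determinant identification.} Your null-Lagrangian route $\det\nabla u_\eps=\partial_{1}(u_\eps^1\partial_{2}u_\eps^2)-\partial_{2}(u_\eps^1\partial_{1}u_\eps^2)$ faces exactly the obstruction you later acknowledge for the liminf: the product $u_\eps^1\,\partial_2 u_\eps^2\,\Lcal^2$ need not converge to $u^1 D_2u^2$ when $u_\eps^1\to u^1$ strongly in $L^p$ but $\partial_2 u_\eps^2\,\Lcal^2$ converges only weakly$^*$ as a measure, and concentration can occur precisely where $u^1$ is discontinuous. The paper handles this carefully in Lemma~\ref{lem:det}, by mollifying $u_\eps$, using $\|\partial_1 u_\eps\|_{L^\infty}=1$ to bound $|\det\nabla u_{\eps,j}|\le C|\nabla u_{\eps,j}|$, and passing $j\to\infty$ by Vitali--Lebesgue, following \cite{FLM05}. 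Your sketch of ``passing distributional limits'' does not supply that step.

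\textbf{Liminf inequality.} Here the issue is not merely technical but directional. You set $\gamma_\eps\Lcal^2\weaklystar\Gamma$ and aim to show $|\Gamma|(\Omega)\ge E(u)$, then to identify $\Gamma$ with the scalar projection $Re_1\cdot D_2 u$. But $Re_1\cdot D_2 u = \gamma\,\Lcal^2 + (Re_1\cdot\varrho)\,|D^su|$, where $\varrho$ is the unit polar density of $(D^su)e_2$, so $|\Gamma|(\Omega)=\int_\Omega|\gamma|\,dx+\int_\Omega|Re_1\cdot\varrho|\,d|D^su|\le E(u)$, with equality only when $\varrho\parallel Re_1$. That alignment is exactly the extra constraint defining $\Acal^\parallel$ and is \emph{false} for general $u\in\Acal$ (e.g.~when $R$ jumps, $[u]=[R]x+[\psi]$ is not parallel to $Re_1$). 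So even granting the identification of $\Gamma$, you would obtain $\liminf_\eps\int|\gamma_\eps|\ge|\Gamma|(\Omega)$ with $|\Gamma|(\Omega)\le E(u)$, which closes nothing. By scalarizing too early you lose the mass of the singular part. The paper's Version~II keeps the tensor: it works with $\gamma_\eps R_\eps e_1\otimes e_2=\nabla u_\eps-R_\eps$, whose pointwise norm is exactly $|\gamma_\eps|$, identifies its weak$^*$ limit as $\nu=Du-R\Lcal^2=(\gamma Re_1\otimes e_2)\Lcal^2+D^s u$ using only $R_\eps\weaklystar R$ in $L^\infty$ (no decoupling of $R_\eps$ from $\partial_2u_\eps$ is required), and gets $|\nu|(\Omega)=E(u)$ by plain lower semicontinuity of total variation. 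The paper's Version~I achieves the same through Reshetnyak's theorem applied to the convex one-homogeneous integrand $F\mapsto\sqrt{|F|^2-2\det F}$, which equals $|\gamma|$ on $\Mcal_{e_1}$ and equals $1$ on the polar density $g_u$ of the singular part since $(D^su)e_1=0$ implies $\det g_u=0$. Either route bypasses the product obstruction you correctly flagged; the remedy you propose (smoothing the direction field $Re_1$ and controlling by $|DR|$) would not help, because the singular measure sits on $J_R$ where no approximation controls the product.
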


The proof of the first assertion is given in Proposition~\ref{con:compactness}. It relies on Theorem~\ref{thm:aymrigintro} in combination with a technical argument about the weak continuity properties of Jacobian determinants (see Lemma~\ref{lem:det}). 
In Section \ref{sec:par}, we   exhibit  two different  proofs of~\eqref{eq:GliminfEeps}: A first one relying on a Reshetnyak's lower semicontinuity  theorem (see, e.g.,~\cite[Theorem~2.38]{AFP00}),  and an   alternative one  exploiting the properties of the admissible
layered deformations.  The identification of $E$ as the $\Gamma$-limit of the sequence $(E_{\eps})_{\eps}$, though, remains an open problem. Indeed, verifying the optimality of the lower bound in Theorem \ref{theo:Gamma_convergence_new}  is rather challenging, as it requires to approximate elements of $\Acal$ by means of sequences in $\Acal_{\eps}$ at least in the sense of the strict convergence  in $BV$. We refer to Remark \ref{rmk:optimalitylb} for a  detailed  discussion of the main difficulties.  Even if the requirement on the convergence of the energies is dropped, recovering the jumps of maps in the effective domain of $E$ under consideration of  the non-standard differential inclusions in $\Acal_\eps$ is by itself another challenging problem.
Solving this problem requires   delicate geometrical constructions, which are currently not available for all elements in $\Acal$. 

Yet, there are two subclasses of physically relevant deformations in $\Acal$ for which we can find suitable approximations by sequences of admissible layered deformations. The precise statement is given in Theorem~\ref{thm:SBVinfty} below. 

The first of these two subclasses is
 $\Acal\cap
SBV_{\infty}(\Omega;\R^2)$ (we refer
to Subsection \ref{subs: SBV} for the definition
of the set $SBV_{\infty}$)
whose
   jump  sets
are given by a    union of finitely many
lines.
 Heuristically, this subclass  describes deformations that break $\Omega$ horizontally into a finite number of pieces,  which may get sheared and rotated individually.
  
 The second subclass is 
\begin{equation}\label{eq:def-Apar}
\begin{array}{l}
\Acal^{\parallel}
:= \big\{u\in BV(\Omega;\R^2)\!:\,  \,u(x) =  Rx +\vartheta(x_2) Re_1+c
\text{ for a.e.\! }x\in  \Omega \text{ with } 
\\[0.2cm]   
\qquad\qquad \qquad \qquad \qquad \ \  R\in SO(2), \text{ $\vartheta\in BV(a_\Omega,b_\Omega)$}, \text{ and
 }  c\in\RR^2 
\big\}.
\end{array}
\end{equation}
 
In comparison with $\Acal$, functions in $\Acal^\parallel$ satisfy two additional constraints,  namely the fact
that the rotation $R$ is constant and that the jumps of functions in $\Acal^\parallel$ are parallel to
$Re_1$. With the notation $\Acal^{\parallel}$, we intend to highlight the second feature. 
The intuition behind maps in $\Acal^\parallel$ are non-trivial macroscopic deformations that (up to a global rotation) may make the material break along finite or infinitely many horizontal lines, induce sliding of the pieces relative to each other, and cause horizontal shearing within each individual piece.  
For an illustration of the two subclasses,  see Figure~\ref{fig:AparAinfty}.

\begin{center}
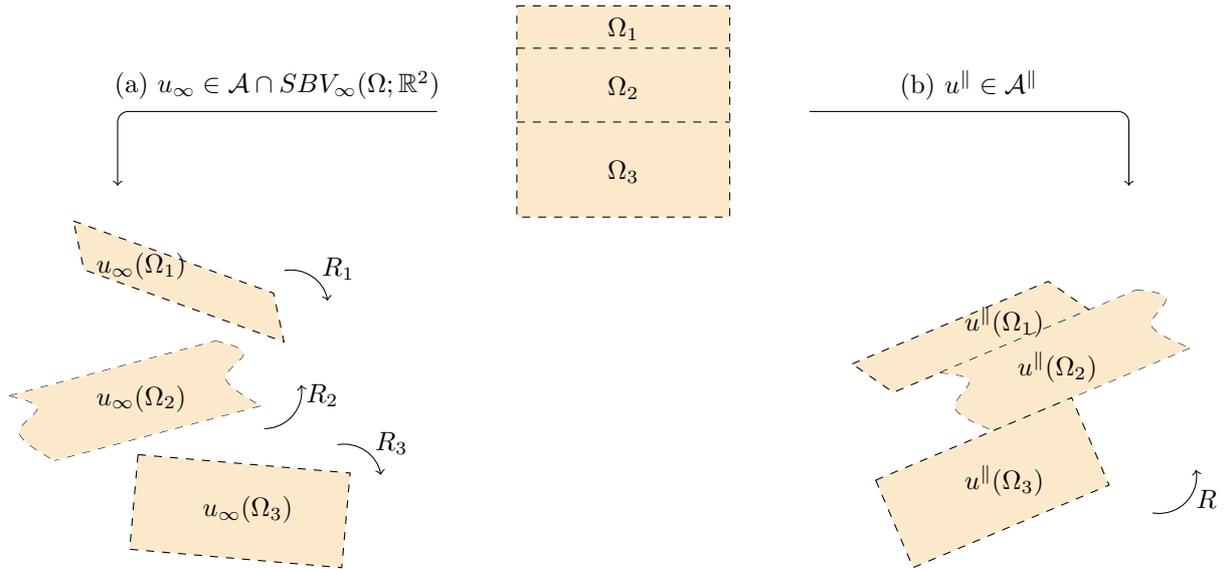
\begin{figure}[h]
\begin{tikzpicture}[scale=.7]   
\draw[dashed, black, fill=Korange!20] 
(0,0) -- (4,0) -- (4, -4) -- (0, -4)
 -- (0,0);

\draw[dashed, black] 
(0,-.8) -- (4,-.8) ;  
  
 \draw[dashed, black] 
(0,-2.2) -- (4,-2.2) ; 

\draw(2,-.45)  node 
        {$\Omega_1$};
        
\draw(2,-1.55)  node 
        {$\Omega_2$};
        
        \draw(2,-3.15)  node 
        {$\Omega_3$};

\draw [rounded corners,->] (5.5,-2) -- ++(6,0pt) -- ++(0,-40pt)
;
\draw(8.5,-1.5)  node 
        {(b) \(u^\parallel \in\mathcal{A}^\parallel  \)};

\draw [rounded corners,->] (-1.5,-2) -- ++(-6,0pt) -- ++(0,-40pt)
;
\draw(-4.5,-1.5)  node 
        {(a) \(u_\infty \in \mathcal{A}\cap SBV_\infty(\Omega;\RR^2)
 \)};
 
\end{tikzpicture} 
 
    {%
        \begin{tikzpicture}[scale=.7]

\draw[rotate=-20,
dashed, black, fill=Korange!20] 
\foreach \y in {3} \foreach \x in {0}{(5,0+\y) -- (9,0+\y) --
(9.5,-.8+\y) -- (5.5,-.8+\y) -- (5,0+\y)} ;

\draw [<-] (10.5,-.4) arc (10:100:20pt);
\draw(10.7,.2)  node 
        {$R_1$};

\draw[rotate=15,
dashed, black] 
\foreach \y in {2.5} \foreach \x in {1.2} {(5-\x,-.8-\y) -- (9-\x,-.8-\y)
 (9.5-\x,-2.2-\y) -- (5.5-\x,-2.2-\y)} ;   

\draw[rotate=15,
dashed, black,name
path=A] 
\foreach \y in {2.5} \foreach \x in {1.2} {plot [smooth, tension=0.65]
coordinates
{(5-\x,-.8-\y)     (5.45-\x, -1.15-\y) (5.-\x, -1.55-\y) (5.5-\x,-2.2-\y)
           }};
\draw[rotate=15,
dashed, black,name
path=B] 
\foreach \y in {2.5} \foreach \x in {1.2} {plot [smooth, tension=0.65]
coordinates
{(9-\x,-.8-\y)     (9.45-\x, -1.15-\y) (9.-\x, -1.55-\y) (9.5-\x,-2.2-\y)
         }          
         }       ;
\tikzfillbetween[of=A and B]{Korange!20};

\draw [<-] (10,-2) arc (10:-90:20pt);
\draw(10.4,-2.2)  node 
        {$R_2$};

\draw[rotate=-5,
dashed, black, fill=Korange!20] 
\foreach \y in {-.5} \foreach \x in {2.2} {(5+\x,-2.2+\y) --
(9+\x,-2.2+\y)
-- (9+\x,-4+\y) -- (5+\x,-4+\y) -- (5+\x,-2.2+\y)} ;
\draw [<-] (11.5,-3.7) arc (10:100:20pt);
\draw(11.7,-3.1)  node 
        {$R_3$};

\draw(7,-.45+.7)  node 
        {$u_\infty(\Omega_1)$};
        
\draw(7,-1.55-.7)  node 
        {$u_\infty(\Omega_2)$};
        
        \draw(9,-3.15-1.2)  node 
        {$u_\infty(\Omega_3)$};

        \end{tikzpicture}  %
        }%
    \hfill%
    {%
  \begin{tikzpicture}[scale=.7]
 \draw[rotate=23,
dashed, black, fill=Korange!20] 
\foreach \y in {0}  \foreach \x in {-0.955}{
(5-.5,0+\y) -- (9-.5,0+\y)
--
(9.5-.5,-.8+\y) -- (5.5-.5,-.8+\y) -- (5-.5,0+\y)};

 \draw[rotate=23,
dashed, black] 
\foreach \y in {0}  \foreach \x in {-0.955}{
(5-\x,-.8+\y) -- (9-\x,-.8+\y)
 (9.5-\x,-2.2+\y) -- (5.5-\x,-2.2+\y)};    

 \draw[rotate=23,
dashed, black, name
path=C] 
\foreach \y in {0}  \foreach \x in {-0.955}{
plot [smooth, tension=0.65] coordinates
{(5-\x,-.8+\y)     (5.45-\x, -1.15+\y) (5.-\x, -1.55+\y) (5.5-\x,-2.2+\y)
           }};
 \draw[rotate=23,
dashed, black, name path=D] 
\foreach \y in {0}  \foreach \x in {-0.955}{           
plot [smooth, tension=0.65] coordinates
{(9-\x,-.8+\y)     (9.45-\x, -1.15+\y) (9.-\x, -1.55+\y) (9.5-\x,-2.2+\y)
         }};  
         \tikzfillbetween[of=C and D]{Korange!20};
         \draw[rotate=23,
dashed, black, fill=Korange!20] 
\foreach \y in {0}  \foreach \x in {-0.955}{ (5+\x,-2.2+\y) --
(9+\x,-2.2+\y)
-- (9+\x,-4+\y) -- (5+\x,-4+\y) -- (5+\x,-2.2+\y)} ;
\draw [<-] (10.6,-.25) arc (10:-100:20pt);
\draw(10.8,-.8)  node 
        {$R$};
        
\draw(7,-.45+3)  node 
        {$u^\parallel(\Omega_1)$};
        
\draw(8,-1.15+2.85)  node 
        {$u^\parallel(\Omega_2)$};
        
        \draw(7,-3.15+2.7)  node 
        {$u^\parallel(\Omega_3)$};

   \end{tikzpicture}%
        
        }%
    \caption{A typical deformation of a reference configuration
    \(\Omega= \Omega_1 \cup \Omega_2 \cup \Omega_3\)
    through maps in (a) \(\mathcal{A}\cap SBV_\infty(\Omega;\RR^2)\)
    and (b) \(\mathcal{A}^\parallel\).}
\label{fig:AparAinfty}
\end{figure}
\end{center}

 \begin{theorem}[Approximation of maps in \boldmath{$(\Acal\cap SBV_{\infty}) \cup
\Acal^\parallel$}]\label{thm:SBVinfty}
 Let \(\Omega\subset\RR^2\) be an \(x_1\)-connected domain 
and $u\in (\Acal\cap SBV_{\infty}(\Omega;\R^2)) \cup
\Acal^\parallel$. Then, there exists a sequence $(u_\eps)_\eps\subset W^{1,1}(\Omega;\R^2)$ such that
$u_\eps\in \Acal_\eps$ for every $\eps$, and $u_\eps\weaklystar
u$ in $BV(\Omega;\R^2)$. 
\end{theorem}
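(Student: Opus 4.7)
Treat the two subclasses separately via explicit constructions.

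For $u\in\Acal^{\parallel}$, write $u(x)=Rx+\vartheta(x_2)Re_1+c$ with $R\in SO(2)$, $\vartheta\in BV(a_\Omega,b_\Omega)$, $c\in\R^2$, and reduce the problem to a one-dimensional layer-preserving approximation. The goal is to produce $\vartheta_\eps\in W^{1,1}(a_\Omega,b_\Omega)$ whose derivative vanishes on every rigid sublayer of the $\eps$-lattice and which approximates $\vartheta$ strictly in $BV$. A natural realization is to assign $\vartheta_\eps$ a trace value of $\vartheta$ on each rigid sublayer and to linearly interpolate across each intervening soft sublayer, thereby pushing the mass of $|D\vartheta|$ (absolutely continuous, jump, and a priori Cantor part) into the soft layers while preserving its total. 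Then $u_\eps(x):=Rx+\vartheta_\eps(x_2)Re_1+c$ lies in $W^{1,1}(\Omega;\R^2)$ with $\nabla u_\eps=R(\Ibb+\vartheta_\eps'(x_2)e_1\otimes e_2)\in \Mcal_{e_1}$ a.e.\ and $\gamma_\eps(x):=\vartheta_\eps'(x_2)$ vanishing on $\eps\Yrig\cap\Omega$, so $u_\eps\in\Acal_\eps$. Strict one-dimensional convergence of $\vartheta_\eps$ lifts to $u_\eps\weaklystar u$ in $BV(\Omega;\R^2)$ by a direct computation.

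For $u\in\Acal\cap SBV_\infty$ with jump set $J_u=\Omega\cap\bigcup_{i=1}^{N-1}\{x_2=y_i\}$, set $y_0:=a_\Omega$, $y_N:=b_\Omega$, and $\Omega_i:=\Omega\cap\{y_{i-1}<x_2<y_i\}$. The structural form $u=R(x_2)x+\psi(x_2)$ combined with $\det\nabla u=1$ a.e.\ and the absence of a Cantor part forces $R$ to be piecewise constant between the $y_i$ and $\psi$ piecewise $W^{1,\infty}$ with $\psi'$ parallel to $Re_1$; hence on each strip $u|_{\Omega_i}(x)=R_ix+\vartheta_i(x_2)R_ie_1+c_i$ is of $\Acal^{\parallel}$ type on $\Omega_i$, and the construction of the previous paragraph yields local admissible approximations $u_\eps^{(i)}$ on $\Omega_i$. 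It remains to glue these across the jump lines: for each $i$ pick an $\eps$-period $[\eps k_i^\eps,\eps(k_i^\eps+1))$ whose soft sublayer $T_i^\eps$ straddles (or lies adjacent to) $y_i$, and redefine $u_\eps$ inside the transition slab $T_i^\eps\times(c_\Omega,d_\Omega)$ so that its gradient lies in $\Mcal_{e_1}$ and its top and bottom traces match the rigid-motion boundary data imposed by the adjacent rigid sublayers, which carry the rotations $R_i$ below and $R_{i+1}$ above.

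\textbf{Main obstacle.} The hard part is precisely this gluing when $R_i\neq R_{i+1}$. Because Reshetnyak rigidity pins $u_\eps$ to an affine rigid motion on each connected rigid sublayer, the transition slab must support a deformation whose gradient passes from the rotation $R_i$ on its bottom trace to $R_{i+1}$ on its top trace while remaining of the form $R(\Ibb+\gamma e_1\otimes e_2)$ with $R\in SO(2)$ a.e. A smooth ansatz $R=R(x_2)$ is immediately ruled out: matching the mixed partials $\partial_2(Re_1)=\partial_1(Re_2+\gamma Re_1)$ under $\partial_1 R=0$ forces the angular speed to vanish, so $R$ would have to be constant. One therefore has to build a piecewise-constant microstructure of admissible states $R(\Ibb+\gamma e_1\otimes e_2)$ inside $T_i^\eps$, interfaced along oblique lines whose normals satisfy the rank-one compatibility conditions between adjacent states, in such a way that the overall configuration connects $R_i$ to $R_{i+1}$. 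The translational case $R_i=R_{i+1}$ is easier and reduces to absorbing the jump through a single concentrated shear $\gamma_\eps\sim \eps^{-1}$ on $T_i^\eps$, in direct analogy with the $\Acal^{\parallel}$ construction. Once such an admissible $W^{1,1}$ gluing is in place, $u_\eps\weaklystar u$ in $BV(\Omega;\R^2)$ follows by combining the strip-wise strict $BV$-convergence with the vanishing Lebesgue measure of $\bigcup_i T_i^\eps\times(c_\Omega,d_\Omega)$.
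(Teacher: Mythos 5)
Your decomposition into $\Acal^\parallel$ and $\Acal\cap SBV_\infty$ matches the paper, and your treatment of $\Acal^\parallel$ (reduce to a one-dimensional approximation whose derivative vanishes on $\eps I_{\rm rig}$, then lift) is essentially the paper's Proposition~\ref{prop:Aparallel} together with Lemma~\ref{lem:approx_layers}. That part of the proposal is sound.

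For $\Acal\cap SBV_\infty$, however, there is a genuine gap: you correctly identify the obstacle (a smooth $R=R(x_2)$ transition is impossible since $\partial_1 R = 0$ forces $R$ constant, so you need a rank-one-compatible piecewise-constant microstructure in $\Mcal_{e_1}$ connecting $R_i$ to $R_{i+1}$ while producing the prescribed jump), but you stop there and do not show that such a microstructure exists. This is the core of the argument, and it is not an a priori consequence of knowing the rank-one compatibility relations. The paper resolves it in Lemma~\ref{lem:existence_admissible1} by an explicit eight-region piecewise affine construction in the soft layer involving an \emph{auxiliary} rotation $S$ chosen so that $Se_1$ and $R^+e_1$ are linearly independent and $S\neq R^\pm$; the jump $\psi^+-\psi^-$ is absorbed by decomposing it as $\alpha R^+ e_1 + \beta S e_1$ and concentrating shears of order $\eps^{-1}$ on sub-slabs, with the four interfaces taken as full-width inclined lines precisely to avoid triple junctions (which are not rank-one compatible in $\Mcal_{e_1}$; see Remark~\ref{rmk:alternativeconst}). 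Asserting that ``one therefore has to build a piecewise-constant microstructure'' does not establish that the compatibility equations admit a global solution with the correct limiting jump, which is exactly what the proof must supply.

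A secondary issue: your claim that the translational case $R_i = R_{i+1}$ reduces to a single concentrated shear on $T_i^\eps$ is only correct when the jump $\psi^+-\psi^-$ is parallel to $Re_1$. For general $u\in\Acal\cap SBV_\infty$ the jump of $\psi$ need not be parallel to $Re_1$ (the constraint $\psi'\cdot Re_2=0$ from $\det\nabla u=1$ acts only on the absolutely continuous part of $D\psi$), and a single shear $R(\Ibb+\gamma_\eps e_1\otimes e_2)$ can only contribute a jump parallel to $Re_1$. The non-parallel translational case again requires the auxiliary-rotation microstructure (cf.\ Remark~\ref{rmk:alternativeconst}(ii)).
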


 As a first step towards proving Theorem~\ref{thm:SBVinfty}, we  establish an admissible piecewise affine approximation for limiting deformations with  a single jump line  (see Lemma \ref{lem:existence_admissible1}).  The construction relies on the characterization of rank-one connections in $\Mcal_{e_1}$ proved in \cite[Lemma 3.1]{ChK17}, with transition lines stretching over the full width of $\Omega$ to avoid triple junctions (see Remark \ref{rmk:alternativeconst}).  In Propositions \ref{prop:SBVinfty} and \ref{prop:Aparallel}, we extend the arguments to $\Acal\cap SBV_{\infty}(\Omega;\R^2)$ and $\Acal^{\parallel}$, respectively. \\

Problems in finite crystal plasticity without additional regularizations are  generally  known to be challenging because of the oscillations of minimizing sequences arising as a byproduct of relaxation mechanisms in the slip systems.  This phenomenon is  one of the main reasons why a full relaxation theory in finite crystal plasticity is still missing (see \cite[Remark 3.2]{CDK13}).  In our setting,  it hampers  the full characterization of weak limits of sequences with uniformly bounded energies. 
  The observation that regularizations can help overcome the above compensated-compactness issue (see also Remark \ref{rk:cc}) motivates the introduction of a penalized version of our problem. 
 After  a  higher-order penalization of
the energy in the layer direction, we obtain the following
$\Gamma$-convergence result.  The attained limit deformations are given by the class $\Acal^{\parallel}$.

\begin{theorem}[$\Gamma$-convergence of the  regularized energies]\label{thm:Gamma_convergence}
Let \(\Omega\subset\RR^2\) be an \(x_1\)-connected domain and
\(\Acal_\epsi\) the set introduced in \eqref{eq:def-Aep}. Fix $p>2$ and $\delta>0$.  
For each $\eps>0$, let \(E^\delta_\epsi: L^1_0(\Omega;\R^2)\to [0,\infty]\) be the functional defined  
by
\begin{equation}\label{eq:def-E-ep-delta}
E^\delta_\eps(u) 
:=\begin{cases}
\displaystyle \int_\Omega \abs{\gamma}\dd{x}  + \delta \norm{\partial_1 u}_{W^{1,p}(\Omega;\R^2)}^p
& \text{for $u\in \Acal_\eps$,}\\ 
\infty & \text{otherwise.} 
\end{cases}
\end{equation}
Then, the family $(E_\eps^\delta)_\eps$ $\Gamma$-converges
with respect to the strong $L^1$-topology to the functional 
$E^\delta:L^1_0(\Omega;\R^2)\to [0,\infty]$ given by 
\begin{align*}
E^\delta(u):= \begin{cases} \displaystyle   \int_\Omega  |\vartheta'(x_2)|
\dd{x}  + \abs{D^su}(\Omega) +\ \delta|\Omega|  & \hbox{ for } u\in \Acal^{\parallel},\\
\infty & \hbox{ otherwise,}
\end{cases}
\end{align*}
 where $\vartheta'$ denotes the approximate differential of $\vartheta$  (cf.~Section~\ref{subs:BV}). 
\end{theorem}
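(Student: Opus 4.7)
The plan is to follow the usual three-step scheme of $\Gamma$-convergence: a compactness and domain-identification statement for sequences with uniformly bounded energies, a liminf inequality, and an explicit recovery sequence for each \(u\in\Acal^\parallel\). The role of the regularization \(\delta\norm{\partial_1 u}_{W^{1,p}(\Omega;\R^2)}^p\) with \(p>2\) is twofold. First, via the Sobolev embedding \(W^{1,p}\embed C^0\) (valid because \(p>2\)), it yields compactness of \((\partial_1 u_\eps)_\eps\) in \(C^0(\overline\Omega;\R^2)\); combined with the structural identity \(\nabla u_\eps=R_\eps(\Ibb+\gamma_\eps e_1\otimes e_2)\), this will provide enough rigidity to shrink the effective domain from \(\Acal\) (as allowed by Theorem~\ref{theo:Gamma_convergence_new}) down to \(\Acal^\parallel\). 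Second, since \(|\partial_1 u_\eps|=1\) pointwise for every \(u_\eps\in\Acal_\eps\), it produces the additive constant \(\delta|\Omega|\) in the limit energy.

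The compactness step is the heart of the argument and, I expect, the main obstacle. Given \((u_\eps)_\eps\subset L^1_0(\Omega;\R^2)\) with \(\sup_\eps E_\eps^\delta(u_\eps)<\infty\), the uniform \(L^1\)-bound on \(\gamma_\eps\) combined with Theorem~\ref{theo:Gamma_convergence_new} gives, along a subsequence, \(u_\eps\weaklystar u\) in \(BV(\Omega;\R^2)\) with \(u\in\Acal\); so \(u(x)=R(x_2)x+\psi(x_2)\) for some \(R\in BV(a_\Omega,b_\Omega;SO(2))\) and \(\psi\in BV(a_\Omega,b_\Omega;\R^2)\). From \(\nabla u_\eps=R_\eps(\Ibb+\gamma_\eps e_1\otimes e_2)\) one reads \(\partial_1 u_\eps=R_\eps e_1\), hence \(|\partial_1 u_\eps|=1\) a.e. The uniform \(W^{1,p}\)-bound and Sobolev embedding then give, up to a further subsequence, \(\partial_1 u_\eps\to R(x_2)e_1\) strongly in \(C^0(\overline\Omega;\R^2)\); so \(x_2\mapsto R(x_2)e_1\) belongs to \(W^{1,p}\cap C^0\) and \(R\) is absolutely continuous on \((a_\Omega,b_\Omega)\). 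Writing \(R(x_2)\) as the rotation of angle \(\theta(x_2)\), a direct computation yields \(\det\nabla u=1+\theta'(x_2)x_1+Re_2\cdot\psi'(x_2)\) on the a.c.\ part of \(Du\); the constraint \(\det\nabla u=1\) built into \(\Acal\) combined with the \(x_1\)-connectedness of \(\Omega\) (which rules out nonvanishing linear functions of \(x_1\)) forces \(\theta'=0\) a.e., and absolute continuity upgrades this to \(R\equiv R_0\) constant. To also pin down the jumps of \(\psi\), the plan is to pass to the weak-\(^*\) limit in the measure identity \(Du_\eps-R_\eps\dd{x}=(\gamma_\eps R_\eps e_1)\otimes e_2\dd{x}\): since \(R_\eps e_1\to R_0 e_1\) uniformly and \(\gamma_\eps\dd{x}\) admits subsequentially a weak-\(^*\) limit \(\mu\in\Mcal(\Omega)\), the product rule for uniform \(\times\) weak-\(^*\) convergence yields \(D(u-R_0 x)=(R_0 e_1\otimes e_2)\mu\). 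The first column of the right-hand side vanishes, so \(u-R_0 x\) depends only on \(x_2\); the second column is parallel to \(R_0 e_1\), so \(u(x)=R_0 x+\vartheta(x_2)R_0 e_1+c\) for some \(\vartheta\in BV(a_\Omega,b_\Omega)\) and \(c\in\R^2\). This places \(u\in\Acal^\parallel\).

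The liminf inequality follows by splitting \(E_\eps^\delta\) into its two contributions. Theorem~\ref{theo:Gamma_convergence_new} gives \(\liminf_\eps\int_\Omega|\gamma_\eps|\dd{x}\geq\int_\Omega|\psi'\cdot R_0 e_1|\dd{x}+|D^s u|(\Omega)=\int_\Omega|\vartheta'(x_2)|\dd{x}+|D^s u|(\Omega)\), while \(|\partial_1 u_\eps|=1\) pointwise implies \(\delta\norm{\partial_1 u_\eps}_{W^{1,p}}^p\geq\delta\norm{\partial_1 u_\eps}_{L^p}^p=\delta|\Omega|\); summing yields the desired bound. For the recovery sequence, given \(u\in\Acal^\parallel\), I invoke the construction underlying Theorem~\ref{thm:SBVinfty} in the \(\Acal^\parallel\)-case to produce \(u_\eps(x)=R_0 x+\vartheta_\eps(x_2)R_0 e_1+c\) with \(\vartheta_\eps\) constant on each rigid strip, smoothly interpolating on each soft strip, and \(\vartheta_\eps\to\vartheta\) strictly in \(BV(a_\Omega,b_\Omega)\). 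With this choice \(\partial_1 u_\eps=R_0 e_1\) is genuinely constant, so \(\delta\norm{\partial_1 u_\eps}_{W^{1,p}}^p=\delta|\Omega|\) exactly, and the plastic contribution rewrites as \(\int_{a_\Omega}^{b_\Omega}\ell(x_2)|\vartheta_\eps'(x_2)|\dd{x_2}\), where \(\ell(x_2)\) denotes the (continuous) length of the horizontal slice of \(\Omega\) at height \(x_2\). Strict convergence in \(BV\) together with Reshetnyak's continuity theorem then yields convergence of this integral to \(\int\ell(x_2)\dd|D\vartheta|(x_2)=\int_\Omega|\vartheta'|\dd{x}+|D^s u|(\Omega)\), so \(E_\eps^\delta(u_\eps)\to E^\delta(u)\), completing the \(\Gamma\)-convergence proof.
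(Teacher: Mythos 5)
Your proposal is correct and follows the same three-step $\Gamma$-convergence scheme as the paper (compactness and domain identification, liminf inequality, recovery sequence), with the same key ingredients (Sobolev embedding $W^{1,p}\embed C^{0,\alpha}$, Theorem~\ref{theo:Gamma_convergence_new}, Lemma~\ref{lem:approx_layers}). However, within each step you take a genuinely different local route. In the compactness step, after obtaining $u\in\Acal$ and $R_\eps\to R$ uniformly, you rederive $\theta'=0$ from the explicit determinant identity $\det\nabla u = 1+\theta'x_1+Re_2\cdot\psi'$ and $x_1$-connectedness (this is essentially the content of Proposition~\ref{prop:charA}, which the paper cites instead), and you then pin down the jump direction by passing the matrix identity $Du_\eps - R_\eps\,\dd x = (\gamma_\eps R_\eps e_1)\otimes e_2\,\dd x$ to the limit via a uniform-times-weak$^\ast$ product rule. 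This gives $D(u-R_0x)=(R_0e_1\otimes e_2)\mu$ directly and hence the $\Acal^\parallel$ form, \emph{bypassing Alberti's rank-one theorem} and Lemma~\ref{lem:charApar}, whereas the paper works with the scalar identity $(\nabla u_\eps)e_2\cdot R_\eps e_2=1$ and the polar decomposition of $D^su$; your route is arguably cleaner. For the liminf, your observation that $\|\partial_1 u_\eps\|_{L^p}^p=|\Omega|$ exactly for every $\eps$ is more elementary than the paper's use of weak lower semicontinuity of the $W^{1,p}$-norm together with $R$ constant. For the recovery, you apply Lemma~\ref{lem:approx_layers} to the \emph{whole} $\vartheta$ and then use Reshetnyak's continuity theorem to handle the weighted integral $\int_{a_\Omega}^{b_\Omega}\ell(x_2)|\vartheta_\eps'|\dd x_2$ on the general domain, while the paper's first proof splits $\vartheta=\vartheta_a+\vartheta_s$, treats the absolutely continuous part via Riemann--Lebesgue with a $1/\lambda$ factor on the soft layers, applies Lemma~\ref{lem:approx_layers} only to $\vartheta_s$, and reduces to a rectangle where $\ell\equiv 1$; you buy a more unified treatment at the price of having to verify that the slice-length $\ell$ is continuous for Lipschitz $x_1$-connected domains (true, but unaddressed in your write-up — the rectangle reduction that the paper performs would sidestep this entirely).
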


The penalization in \eqref{eq:def-E-ep-delta} can be viewed in the spirit of non-simple materials \cite{toupin1, toupin2}.   Working with stored energy densities that depend  on the Hessian of the deformations has proved successful in overcoming lack of compactness in a variety of applications;  see, e.g.,~\cite{ball-currie, DFL04, friedrich.kruzik, mielke.roubicek, podioGuidugli}.  Very recently, there has been an effort towards weakening higher-order regularizations:  It is shown  in \cite{BeKrSc} that the full norm of the Hessian can be replaced by a control of  its  minors (gradient polyconvexity) in the context of locking materials;
for solid-solid phase transitions, an anisotropic second-order penalization  is  considered in \cite{davoli.friedrich}. Along these lines, we introduce  the regularized energies in~\eqref{eq:def-Apar} that penalize the variation of deformations only in the layer direction. This is enough to deduce that the limiting rotation (as $\eps\to 0$) is global and that it determines the direction of the limiting jump.  In Section \ref{sect:regularization}, we provide two alternative proofs of this result: A first one relying on Alberti's rank one theorem (see Section \ref{sec:prel})  in combination with the approximation result in Theorem~\ref{thm:SBVinfty},  and a second one based on separate regularizations of the regular and the singular part of the limiting maps, and inspired by \cite[Lemma~3.2]{CrDe11}.\\

This paper is organized as follows. In Section \ref{sec:prel}, we collect  a few preliminaries, including some background on (special) functions of bounded variation.  Section \ref{sec:as} is devoted to the analysis of asymptotic rigidity  for layered structures in the setting of $BV$-functions.  A characterization of limits of admissible layered deformations is provided in Section \ref{sec:ad}. Eventually, Sections \ref{sec:par} and \ref{sect:regularization} contain the proof of a lower bound for the homogenization problem without regularization (Theorem~\ref{theo:Gamma_convergence_new}) and the full $\Gamma$-convergence analysis of the regularized problem (Theorem~\ref{thm:Gamma_convergence}), respectively.

 \section{Preliminaries} 
\subsection{Notation}\label{sec:prel}
In this section,  unless mentioned otherwise, $\Omega$ is  a bounded domain in $\mathbb{R}^{N}$ with $N\in \mathbb{N}$.  Throughout the rest of the paper, we assume mostly that $N=2$. 

We represent by \(\Lcal^N\)
the \(N\)-dimensional Lebesgue measure
and by \(\Hcal^{N-1}\) the \((N-1)\)-dimensional Hausdorff measure.
 Whenever we write ``a.e.\! in \(\Omega\)", we mean   ``almost everywhere
in \(\Omega\)'' with respect to \(\Lcal^N\lfloor\Omega\). To simplify
the notation, we often omit the expression ``a.e.\! in \(\Omega\)"  in mathematical relations involving Lebesgue  measurable
functions. 
Given a  Lebesgue  measurable
set \(B\subset \RR^N\),  we also use the shorter notation $|B|=\mathcal{L}^N(B)$  for the Lebesgue measure of \(B\), while   the characteristic function of $B$ in $\R^N$ is denoted by \(\mathbbm{1}_B\) and
takes values  \(0\) and \(1\).  
 
The  set  \(SO(N):=\{R\in
\RR^{N\times N}\!:\, R R^T = \Ibb,\, \det R =1\}\), where \(\Ibb\)
is the identity matrix in \(\RR^{N\times N}\),   consists of all  proper rotations.
 We recall that for \(N=2\), \(R\in SO(2)\) if and only if there is \(\theta\in[-\pi,\pi) \) such that
\begin{equation*} 
\begin{aligned}
R = \begin{bmatrix}\cos \theta & -\sin \theta
\\
\sin \theta & \cos \theta \\
\end{bmatrix}.
\end{aligned}
\end{equation*}
 For  two vectors $a, b\in \R^d$,
$a \otimes b:=ab^T$ stands for their tensor product. If $a=(a_1, a_2)^T\in \R^2$, we set $a^\perp := (-a_2, a_1)^T$. 

 We use the standard notation for spaces of vector-valued functions; namely, $L^p_\mu(\Omega;\R^d)$ with $p\in [1, \infty]$ and a positive measure $\mu$  for $L^p$-spaces, $W^{1,p}(\Omega;\R^d)$ with $p\in [1, \infty]$ for Sobolev spaces, $C(\Omega;\R^d)$ for the space of continuous functions, $C^\infty(\Omega;\R^d)$ and $C^\infty_c(\Omega;\R^d)$ for the spaces of smooth functions without and with compact support, and $C^{ 0, \alpha}(\Omega;\R^d)$ with $\alpha\in [0,1]$ for H\"older spaces. We  denote by $C_0(\Omega;\R^d)$ the space of continuous functions that vanish on the boundary of \(\Omega\).   Moreover, $\Mcal(\Omega;\R^d)$ is the space of finite vector-valued Radon measures. 
In the case of scalar-valued functions and measures, we omit the codomain; for instance,  we write $L^1(\Omega)$ instead of $L^1(\Omega;\R)$. 

The duality pairing between $C_0(\Omega;\R^d)$ and  $\Mcal(\Omega;\R^d)$ is represented by $\langle \mu, \zeta \rangle := \int_\Omega \zeta \dd{\mu}$, and   
 $\mu \boldsymbol \otimes \nu$  denotes the product measure of two measures $\mu$ and $\nu$.  \color{black}

Throughout this manuscript, \(\epsi\) stands for a  small (positive)
parameter, and is usually thought of as taking values on a positive
sequence converging to zero.

\subsection{Functions of bounded variation}\label{subs:BV} 
We  adopt the standard notations
for the space $BV(\Omega;\mathbb{R}^d)$ of
 vector-valued  functions of bounded variation, and refer the reader to \cite{AFP00}
 for a thorough treatment of this space. Here, we only recall
 some of its basic properties.

A function $u\in L^1(\Omega;\mathbb{R}^d)$ is  called  a \textit{function of bounded variation}, written $u\in BV(\Omega;\mathbb{R}^d)$, if its distributional derivative $Du$ satisfies $Du\in \mathcal{M}(\Omega;  \mathbb{R}^{d\times N})$. The space \(BV(\Omega;\mathbb{R}^d)\) is a Banach space
when endowed with the norm \(\Vert u\Vert_{BV(\Omega;\mathbb{R}^d)}
:= \Vert u\Vert_{L^1(\Omega;\mathbb{R}^d)} + |Du|(\Omega)\), where \(|Du|\in \Mcal(\Omega)\) is the
 total variation of \(Du\). 

Let  $D^a u$ and $D^s u$ denote the absolutely continuous and the singular part of the Radon--Nikodym decomposition of $Du
$ with respect to \(\Lcal^N\lfloor\Omega\),  and let $D^j u$ and $D^c u$  be  the jump and Cantor parts of $Du
$. The following chain of equalities holds:
\begin{align}\label{Du}
 Du&=D^a u+D^s u= \nabla u \mathcal{L}^N\lfloor\Omega+ D^su = \nabla u \mathcal{L}^N\lfloor\Omega + D^ju+D^c u \nonumber \\
&=\nabla u \mathcal{L}^N\lfloor\Omega+(u^+-u^-)\otimes \nu_u\mathcal{H}^{N-1}\lfloor J_u+D^c u,
\end{align}
where $\nabla u$ is the approximate differential of $u$ (that is, the density of \(D^a u\)), $u^+$ and $u^-$ are the approximate one-sided limits at the jump points, $J_u$ is the jump set of $u$, and $\nu_u$ is the normal to $J_u$ (cf.~\cite[Chapter~3]{AFP00}). 

Following \cite[p.
186]{AFP00}, we can exploit the polar decomposition of a measure and
the fact that all parts of the derivative of \(u\) in~\eqref{Du} are mutually
singular to write $Du=g_u|Du|$ with a map $g_u\in L^1_{|Du|}(\Omega;  \mathbb{R}^{d\times N})$ satisfying $|g_u|=1$ for \(|Du|\)-a.e.\! \(x\in\Omega\) and
\begin{equation*}
\begin{aligned}
 D^au = g_u|D^a u|,  \quad D^su=g_u|D^su|,\quad  D^j u = g_u|D^j u|, \quad D^c u =
g_u|D^c u|.
\end{aligned}
\end{equation*}
Note that%
 \begin{align}
&g_u(x) = \frac{\nabla u(x)}{|\nabla u(x)|} \text{ for } \Lcal^N\text{-a.e.\!
} x\in \Omega \text{ such that } |\nabla u(x)| \not=0, \nonumber\\
& g_u(x) = \frac{u(x^+) - u(x^-)}{|u(x^+) - u(x^-)|}\otimes \nu_u  (x)
\text{ for } \Hcal^{N-1}\text{-a.e.\!
} x\in J_u, \label{eq:polarDj}\\
& g_u(x) = \bar g_u(x)\otimes  n_u (x) 
\text{ for } |D^c u|\text{-a.e.\!
} x\in \Omega \text{  with suitable  Borel maps } \bar g_u:\Omega \to  \RR^{d} ,\,  n_u :\Omega\to \RR^N.
\label{eq:polarDc}
\end{align}
The last equality  relies on  Alberti's rank-one theorem (see \cite{alberti}).
 
 Let $u\in BV(\Omega;\mathbb{R}^d)$ and \((u_j)_{j\in \N} \subset  BV(\Omega;\mathbb{R}^d)\) be a sequence.  One says  that $(u_j)_{j\in \N}$
 weakly* converges to   $u$ in $BV(\Omega;\mathbb{R}^d)$,
 written \(u_j\weaklystar u\) in \(BV(\Omega;\mathbb{R}^d)\),
 if \(u_j \to u\) in \(L^1(\Omega;\mathbb{R}^d)\) and \(Du_j \weaklystar Du\) in \(\Mcal(\Omega;\mathbb{R}^{d\times N})\).  The sequence \((u_j)_{j\in \N}
\) is said to  converge strictly to $u$ in $BV(\Omega;\mathbb{R}^d)$,
 written \(u_j\tostar u\) in \(BV(\Omega;\mathbb{R}^d)\),
 if \(u_j \to u\) in \(L^1(\Omega;\mathbb{R}^d)\) and \(|Du_j|(\Omega)\to
 |Du|(\Omega)\). We recall that strict convergence
 in \(BV(\Omega;\mathbb{R}^d)\)
 implies weak*  convergence
 in \(BV(\Omega;\mathbb{R}^d)\). Moreover, from every bounded
 sequence in \(BV(\Omega;\mathbb{R}^d)\) one can extract a weakly* convergent subsequence  (see~\cite[Theorem~3.23]{AFP00}).

In the one-dimensional setting,  i.e.,~for $\varphi\in BV(a,b;\RR^d)$ with $\Omega=(a, b)\subset \R^N$ and $N=1$,  we  write \(\ffi'\) in place of
 \(\nabla \ffi\) to denote the approximate differential of \(\ffi\). Accordingly, we use the notation
$Du=\varphi'\Lcal^1 +D^s\varphi$
for the decomposition of the distributional derivative of \(\ffi\)
with respect to the Lebesgue measure. 

A function $\varphi \in BV(a,b;  \R^d)$ is called a  jump or Cantor function if $D\varphi=D^j\varphi$ or $D\varphi=D^c\varphi$, respectively.  We denote the sets of all jump and Cantor functions by $BV^j(a,b;\R^d)$ and $BV^c(a,b;\R^d)$, respectively.
As shown in~\cite[Corollary~3.33]{AFP00}, it is a special property of the one-dimensional setting that 
\begin{align}\label{1dsplitting}
BV(a,b;\R^d) = W^{1,1}(a,b;\R^d) + BV^j(a,b;\R^d) + BV^c(a,b;\R^d).
\end{align}

Throughout this paper,  two-dimensional  functions of the form
\begin{equation}
\label{eq:form-u}
\begin{aligned}
u(x) = R(x_2)x + \psi(x_2)
\end{aligned}
\end{equation}
with \(x=(x_1,x_2) \in  \Omega=Q:=  (c,d) \times(a,b)\subset\RR^2\),
where \(R\in BV(a,b;SO(2))\) and \(\psi\in BV(a,b;\RR^2)\), play
a fundamental role. 
 Maps $u$ as in \eqref{eq:form-u} satisfy \(u\in BV( \Omega ;\RR^2)\). Denoting by  \(D_1u
:=Du\otimes e_1\)
and \(D_2u:=Du\otimes e_2\), the first and second columns of \(Du\), respectively,
we have for all \(\zeta\in C_0( \Omega)\) that
\begin{equation*}
\begin{aligned}
\langle D_1u, \zeta\rangle = \, &
\int_{ \Omega} \zeta (x) R(x_2)e_1\dd{x_1}\!\!\dd{x_2},\\
\langle D_2u, \zeta\rangle = \,&\int_{ \Omega} \big(\zeta (x) R(x_2)e_2
+ R'(x_2)x+\psi'(x_2)\big)\dd{x_1}\!\!\dd{x_2}  \\ &+ \int_{ \Omega } \zeta(x)
 x_1 
\dd{x_1}\!\!\dd{D^sR(x_2)e_1} + \int_{ \Omega } \zeta(x)
 x_2 
\dd{x_1}\!\!\dd{D^sR(x_2)e_2}+ \int_{ \Omega} \zeta(x) 
\dd{x_1}\!\!\dd{D^s\psi(x_2)}. 
\end{aligned}
\end{equation*}
Hence, \(Du=D^au + D^s u\) with
\begin{equation}\label{eq:onDu1}
\begin{aligned}
&D^au = \big(R+(R' x+\psi')\otimes e_2)\Lcal^2\lfloor  \Omega,  \\ 
&D^su=\big(\big(x^T \Lcal^1\lfloor(c,d){ \boldsymbol\otimes } D^sR^T\big)^T \color{black} + \Lcal^1\lfloor(c,d){ \boldsymbol\otimes } D^s\psi \big)  \otimes e_2,
\end{aligned}
\end{equation}
where \(\Lcal^1\lfloor(c,d){ \boldsymbol\otimes } D^sR^T\)  and  \(\Lcal^1\lfloor(c,d){ \boldsymbol\otimes } D^s\psi\)   denote the restrictions
to the Borel \(\sigma\)-algebra on  \(\Omega=Q\)  of the product measures
between \(\Lcal^1\lfloor(c,d)\) and \( D^sR^T\)   and \(D^s\psi\), respectively.

We observe further that there exists \(\theta \in BV(a,b; [-\pi,\pi])\)
such that 
\begin{equation}\label{eq:Rtrigon}
\begin{aligned}
R = \begin{bmatrix}\cos \theta & -\sin \theta \\
\sin \theta & \cos \theta\\
\end{bmatrix} \enspace \text{ and } \enspace R' =\theta'\begin{bmatrix}-\sin \theta & -\cos \theta
\\
\cos \theta& -\sin
\theta \\
\end{bmatrix},
\end{aligned} \end{equation}
where the representation of $R'$ follows from the chain rule in BV; see, e.g.,~\cite[Theorem~3.96]{AFP00}.

%


 \subsection{Special functions of bounded variation}\label{subs: SBV}
A function $u\in BV(\Omega;\RR^d)$ is said to be a \textit{special function of bounded variation}, written $u\in SBV(\Omega;\RR^d)$, if the Cantor part of its distributional derivative  satisfies
$$D^c u=0.$$
 In particular,  it holds for every  $u\in SBV(\Omega;\RR^d)$ that
$$Du=\nabla u\mathcal{L}^N  \lfloor \Omega  +(u^+-u^-)\otimes \nu_u\mathcal{H}^{N-1}\lfloor J_u.$$
The space $SBV(\Omega;\RR^d)$ is a  proper subspace  of $BV(\Omega;\RR^d)$(c.f.  \cite[Corollary 4.3]{AFP00}).

Next, we recall the definition of the space $SBV_{\infty}(\Omega;\RR^d)$ of special functions of bounded variation  with  bounded gradient and jump length, which is given by
 \begin{equation*}
 SBV_{\infty}(\Omega;\mathbb{R}^d):=\{u\in SBV(\Omega;\mathbb{R}^d)\!:\,\nabla u\in L^{\infty}(\Omega;  \RR^{d\times N})\text{ and }\mathcal{H}^{N-1}(J_u)<+\infty\}.
 \end{equation*}
It is shown in~\cite{CGP07} that the distributional curl of $\nabla u$ for $u\in SBV_{\infty}(\Omega;  \R^d)$ 
is a measure concentrated on $J_u$.

Finally, we introduce the  space 
\begin{align}\label{eq:def-PC}
PC(a,b;  \R^d ) = SBV_\infty(a, b; \R^d) \cap \{u\in BV(a,b;  \R^d)\!:\, D^a u=0\},
\end{align}
 which contains  piecewise constant one-dimensional
functions
with values in  $\R^d$.

\subsection{ Geometry  of the domain}\label{subs:geom}

In this section, we specify our main assumptions on the geometry of $\Omega$,  which, as mentioned in the Introduction, will mostly be a bounded Lipschitz domain in $\R^2$.  Let us first recall from \cite[Section 3]{ChK18} the definitions of \emph{locally one-dimensional} and \emph{one-dimensional} functions. 
\begin{definition}[Locally one-dimensional functions in the \boldmath{$e_2$}-direction]
Let $\Omega\subset \R^2$ be open. A function $f:\Omega\to  \R^d$ is \emph{locally one-dimensional in the $e_2$-direction} if for every $x\in \Omega$, there exists an open cuboid $Q_x\subset \Omega$,  containing \(x\) and with sides parallel to the standard coordinate axes, such that   for all $y=(y_1, y_2), z=(z_1, z_2)\in Q_x$, 
\begin{align}\label{loc1d}
f(y)=f(z)\qquad \text{if $y_2=z_2$.} 
\end{align}  We say that $f$ is \emph{ (globally)  one-dimensional} in the $e_2$-direction  if~\eqref{loc1d}  holds for every $y,z\in \Omega$. 
\end{definition}
Analogous arguments to those in \cite[Section 3]{ChK18} show that a function  $f\in BV(\Omega;\R^d)$  satisfying $D_1 f=0$ is locally one-dimensional in the $e_2$-direction. The following geometrical requirement is the counterpart of \cite[Definitions 3.6 and 3.7]{ChK18} in our setting. 

\begin{definition}[\boldmath{$x_1$}-connectedness]\label{def:nice-sets}
We say that an open set $\Omega\subset \R^2$ is  \emph{$x_1$-connected}   if for every $t\in \mathbb{R}$, the set  $\{x_2=t\}\cap \Omega$ is a (possibly empty) interval.
\end{definition}

In what follows, we  always assume that the set $\Omega\subset \R^2$ is an $x_1$-connected domain. Under this geometrical assumption, the notions of   locally and globally one-dimensional functions in the $e_2$-direction coincide.  We refer to \cite[Section 3]{ChK18} for an extended discussion on the topic, as well as for some explicit geometrical examples.

\section{Asymptotic rigidity of layered structures in $BV$}\label{sec:as}

In this section, we prove Theorem~\ref{thm:aymrigintro},  which characterizes the asymptotic behavior of deformations of bilayered materials that correspond to rigid body motions on the stiff layers, but do not experience any further structural constraints on the softer layers. This qualitative result is not just limited to applications in crystal plasticity, but can be useful for a larger class of layered composites where fracture may occur.

 We start by introducing some notation. Assume that \(\Omega\subset
\RR^2\) is an  \(x_1\)-connected  domain. For $\eps>0$, let 
\begin{align}
\label{eq:def-B-ep}
\Bcal_\eps& := \{ u\in W^{1,1}(\Omega;\R^2): \text{$\nabla u\in SO(2)$ in $\eps\Yrig\cap \Omega$} \}
 \end{align}
represent  the class of  \emph{layered
deformations with rigid components,}  and let  
\begin{align}
\Bcal_0:= \{&u\in BV(\Omega;\R^2):  \text{ there exists }(u_{\eps})_{\eps}\subset W^{1,1}(\Omega;\R^2) \text{ with $u_\eps\in \Bcal_\eps$ for all $\eps$}\label{eq:def-B0}\\
&\hspace{2.75cm}\text{such that } u_\eps\weaklystar u \text{ in $BV(\Omega;\R^2)$} \} \notag
\end{align} 

 be  the  associated set of asymptotically attainable
deformations.

We aim at proving that \(\Bcal_0\) coincides with the set
of \emph{asymptotically
rigid deformations} given by 
 \begin{align}
\Bcal 
:= \big\{u\in BV(\Omega;\R^2)\!:\, &\,\,  u(x) =  R(x_2)x +\psi(x_2) \text{ for a.e.\! }x\in  \Omega \notag\\
& \text{ with  $R\in BV(a_\Omega, b_\Omega;SO(2))$ and $\psi\in BV(a_\Omega, b_\Omega;\R^2)$}  \big\}, \label{eq:def-B}
\end{align}
cf.~\eqref{aOmega}. 
This identity will be a consequence of Propositions~\ref{prop:asymptotic_rigidity2}
and \ref{prop:approx1} below.

\begin{proposition}[Limiting behavior of maps in $\Bcal_{\eps}$]\label{prop:asymptotic_rigidity2}
 Let  $\Omega=(0,1)\times (-1,1)$. Then, 
\begin{align}\label{B0subsetB} \Bcal_0 \subset \Bcal,
\end{align} 
where \(\Bcal_0\) and \(\Bcal\) are the sets introduced in \eqref{eq:def-B0} and \eqref{eq:def-B}, respectively. 
\end{proposition}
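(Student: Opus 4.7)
My strategy is to combine classical (Liouville/Reshetnyak) rigidity on each connected rigid strip with a quantitative control on the jumps between adjacent rigid motions via the uniform $BV$-bound on $(u_\eps)_\eps$, and then to identify the limit by a direct $L^1$-approximation argument.

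First, since $u_\eps\in W^{1,1}$ and $\nabla u_\eps\in SO(2)$ a.e.\ on each connected component $S_{\eps,k}=(0,1)\times I_k$ of $\eps\Yrig\cap\Omega$ (a convex rectangle), Reshetnyak's rigidity theorem yields $R_{\eps,k}\in SO(2)$ and $b_{\eps,k}\in\R^2$ with $u_\eps(x)=R_{\eps,k}x+b_{\eps,k}$ on $S_{\eps,k}$. I then define piecewise-constant maps $R_\eps\colon(-1,1)\to SO(2)$ and $b_\eps\colon(-1,1)\to\R^2$ by assigning the values $R_{\eps,k}$ and $b_{\eps,k}$ on the $k$-th rigid strip together with the adjacent soft strip.

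The main obstacle is producing a uniform $BV(-1,1)$ bound on $R_\eps$ and $b_\eps$; this is the quantitative mechanism by which the assumed $BV$-bound on $u_\eps$ is transferred to the limiting rotation and translation. Fixing consecutive rigid strips $k,k+1$ separated by the soft strip $(0,1)\times(\tau_1,\tau_2)$ of width $\tau_2-\tau_1=\lambda\eps$, trace continuity of $u_\eps\in W^{1,1}$ at the interfaces $\{x_2=\tau_1\}$ and $\{x_2=\tau_2\}$, combined with the fundamental theorem of calculus in $x_2$, yields for $\Lcal^1$-a.e.\ $x_1\in(0,1)$ the identity
\[
R_{\eps,k+1}\begin{pmatrix}x_1\\ \tau_2\end{pmatrix}+b_{\eps,k+1}-R_{\eps,k}\begin{pmatrix}x_1\\ \tau_1\end{pmatrix}-b_{\eps,k}=\int_{\tau_1}^{\tau_2}\partial_2 u_\eps(x_1,t)\,\dd t.
\]
The left-hand side is affine in $x_1$ with leading coefficient $(R_{\eps,k+1}-R_{\eps,k})e_1$; by equivalence of norms on the finite-dimensional space of $\R^2$-valued affine functions on $[0,1]$, integrating in $x_1$, summing over $k$, and using that the jumps of the two columns of $R_{\eps,\cdot}\in SO(2)$ are comparable, I obtain
\[
\sum_k\bigl(|R_{\eps,k+1}-R_{\eps,k}|+|b_{\eps,k+1}-b_{\eps,k}|\bigr)\le C\bigl(|Du_\eps|(\Omega)+1\bigr),
\]
where the $+1$ absorbs the $O(\eps)$ contributions of $R_{\eps,k+1}e_2\,\tau_2-R_{\eps,k}e_2\,\tau_1$ summed over the $O(\eps^{-1})$ strips. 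This step uses crucially the nondegeneracy of $\Omega$ in the $x_1$-direction.

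With these bounds, $BV$-compactness delivers a (not relabelled) subsequence along which $R_\eps\weaklystar R$ in $BV(-1,1;\R^{2\times 2})$ and $b_\eps\weaklystar b$ in $BV(-1,1;\R^2)$, hence strongly in $L^1$; a further a.e.\ convergent subsequence forces $R(x_2)\in SO(2)$ for a.e.\ $x_2$. Setting $\bar u_\eps(x):=R_\eps(x_2)x+b_\eps(x_2)$, the strong convergence gives $\bar u_\eps(x)\to R(x_2)x+b(x_2)$ in $L^1(\Omega)$. By construction $u_\eps=\bar u_\eps$ on every rigid strip, while on each soft strip the difference $u_\eps-\bar u_\eps$ vanishes in the trace sense at the upper interface and is pointwise bounded by $\int_{\tau_1}^{x_2}|\partial_2 u_\eps(x_1,t)|\,\dd t+O(\eps)$; summing yields $\|u_\eps-\bar u_\eps\|_{L^1(\Omega)}=O(\eps)\to 0$. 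Combining, $u(x)=R(x_2)x+b(x_2)$ a.e., so that $u\in\Bcal$.
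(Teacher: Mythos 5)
Your proof is correct and follows essentially the same route as the paper: Reshetnyak rigidity on each rigid strip, the fundamental theorem of calculus in $x_2$ across the soft layers to transfer the $BV$-bound on $(u_\eps)_\eps$ into a one-dimensional $BV$-bound, $BV$-compactness in one variable, and a Poincar\'e-type estimate showing $\|u_\eps-\bar u_\eps\|_{L^1}=O(\eps)$.

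The one genuine variation is in how the translation part $\psi$ is recovered. You extract a simultaneous $BV(-1,1)$-bound on both the piecewise constant rotations and the piecewise constant translations, via the equivalence-of-norms argument on the $4$-dimensional space of $\RR^2$-valued affine functions on $[0,1]$, and then obtain $b_\eps\to b$ in $L^1$ directly by $BV$-compactness. The paper instead only bounds the rotations in $BV$ (using the weaker minimization $\min_b\int_0^1|ta+b|\,\dd t=|a|/4$, which deliberately discards the translation information), and recovers the translation indirectly by writing $b_\eps=w_\eps-\sigma_\eps$ and passing to the limit on each summand. Both arguments are sound; yours is marginally more work for the same conclusion, but it makes the compactness of $(b_\eps)_\eps$ explicit. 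Two small points worth tightening: (i) the term $R_{\eps,k+1}e_2\tau_2-R_{\eps,k}e_2\tau_1$ is not $O(\eps)$ per se; it should be split as $(R_{\eps,k+1}-R_{\eps,k})e_2\,\tau_2+R_{\eps,k}e_2(\tau_2-\tau_1)$, of which only the second piece is $O(\eps)$ (the first is controlled by the already-bounded rotation jump, with the $\eps$-sum of the second pieces contributing the $O(1)$ you absorb); (ii) for $BV$-compactness you also need a uniform $L^1(-1,1)$-bound on $b_\eps$, which follows from $b_{\eps,k}=u_\eps-R_{\eps,k}x$ on the $k$-th rigid strip and the uniform bound on $\|u_\eps\|_{L^1(\Omega)}$, but this deserves a line.
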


\begin{proof}  The proof is inspired by and generalizes ideas from~\cite[Proposition~2.1]{ChK17}.  Let \(u\in \Bcal_0\). Then, there exists a sequence
 $(u_\eps)_\eps\subset
W^{1,1}(\Omega;\R^2)$  satisfying $\nabla u_\epsi\in
SO(2)$ a.e.\! in
$\eps\Yrig\cap \Omega$ for all $\eps$, and    
$u_\eps\weaklystar u$ in $BV(\Omega;\R^2)$.

Fix \(0<\epsi<1\), and let \(I_\epsi:=\{i\in \Z\!:\, ( \R  \times \eps(i-1, i))\cap \Omega \not= \emptyset \}\). For each
 $i\in I_{\eps}$, we define a strip, $P_\eps^i$, by setting
\begin{equation*}
\begin{aligned}
P_{\eps}^i:=( \R  \times \eps[i-1, i))\cap \Omega.
\end{aligned}
\end{equation*}
Note that
if \(i\in\Z\) is such that \(|i|>1+\lceil\frac{1}{\eps}\rceil\),
then \(i\not\in I_\epsi\). Moreover, defining \(i^+_\epsi:= \max
I_\epsi\) and \(i^-_\epsi:= \min I_\epsi\), then
\begin{itemize}
\item[i)] for \(i^-_\epsi<i<i^+_\epsi\), $P_{\eps}^i$ is the
union of two neighboring connected components of $\eps\Yrig\cap\Omega$
and $\eps\Ysoft\cap\Omega$;
\item[ii)]  we may have \(\eps\Ysoft\cap P_{\eps}^{i^-_\epsi}
=\emptyset \) or \(\eps\Yrig\cap P_{\eps}^{i^+_\epsi}
=\emptyset \).
\end{itemize}
From Reshetnyak's theorem, we infer that on each nonempty rigid layer $\eps\Yrig\cap
P_\eps^i$ with $i\in I_\eps$, the gradient $\nabla u_\eps$ is
constant and coincides with a rotation $R_\eps^i\in SO(2)$. Moreover,
there exists \(b^i_\epsi\in\RR^2\) such that \(u_\epsi(x) = R^i_\epsi
x + b^i_\epsi\) in  $\eps\Yrig\cap
P_\eps^i$.

   Using
these rotations $R_\eps^i$, we define a piecewise constant function,  
  $\Sigma_\eps:
(-1,1) \to \R^{2\times 2}$,  by setting \(\Sigma_\eps(t) = \sum_{i\in
I_\eps} R_\eps^i \mathbbm{1}_{\eps[ i-1,1)}(t)\)
for \(t\in(-1,1) \), where  \(R^{i^+_\epsi}_\epsi:= R^{i^+_\epsi-1}_\epsi\)
if \(\eps\Yrig\cap P_{\eps}^{i^+_\epsi}
=\emptyset \).
We claim that there exist a subsequence of $(\Sigma_\eps)_\eps$,
which we do not relabel,
 and a function $R\in BV( -1,1 ; SO(2))$ such that 
\begin{equation}
\label{eq:convSigma_ep}
\begin{aligned}
\Sigma_\eps \to R \quad \text{ in $L^1(-1,1;\R^{2\times 2})$.}
\end{aligned}
\end{equation}

To prove \eqref{eq:convSigma_ep}, we first observe that the total variation of the one-dimensional function $\Sigma_\eps$  coincides with its pointwise variation, and can be calculated to be 
\begin{align}
|D\Sigma_\eps| (-1,1) &  = \sum_{ i\in I_\epsi \backslash\{i^-_\epsi\}} \abs{R_\eps^i-R_\eps^{i-1}} = \sqrt{2} \sum_{
i\in I_\epsi \backslash\{i^-_\epsi\}} \abs{R_\eps^ie_1-R_\eps^{i-1}e_1}
. \label{eq:var-sigma-ep1}
\end{align}

Next, we show that the right-hand side of \eqref{eq:var-sigma-ep1}
is uniformly bounded. By  linear
interpolation in the $x_2$-direction on the softer layers, it follows
 for all  \(i\in I_\epsi \backslash\{i^-_\epsi\}\) if \(\eps\Yrig\cap P_{\eps}^{i^+_\epsi}
\not=\emptyset\) and   \(i\in I_\epsi \backslash\{i^\pm_\epsi\}\) if \(\eps\Yrig\cap P_{\eps}^{i^+_\epsi}
=\emptyset\) that
\begin{align}
\int_{\eps\Ysoft\cap P_\eps^i} \abs{\nabla u_\eps e_2} \dd{x}
 & = \int^{1}_{0} \int_{\eps(i-1)}^{\eps(i -1+\lambda)} \abs{\partial_2
u_\eps(x_1, x_2)}\dd{x_2}\dd{x_1} \notag \\ &\geq \int^1_{0} \abs{u_\eps(x_1,
\eps(i -1+\lambda)) - u_\eps(x_1, \eps(i-1))}\dd{x_1} \notag \\ & 
=  \int^1_{0} \abs{(R_\eps^{i}
e_1-R_\eps^{i-1}e_1)x_1 + b^{i}_\epsi - b^{i-1}_\epsi}\dd{x_1} \geq \frac14\abs{R_\eps^{i}
e_1-R_\eps^{i-1}e_1}. \label{eq:var-sigma-ep2}
\end{align}
 The first estimate is a consequence of Jensen's inequality, and optimization over translations yields the second one. To be more precise, the last estimate in~\eqref{eq:var-sigma-ep2} is based on the observation that for any 
given $a\in \R^2\backslash\{0\}$,
\begin{equation*}
\begin{aligned}
\min_{b\in \R^2} \int^1_{0} | ta+b|\dd{t} = \min_{\alpha,\,\beta\in \R} \int^1_{0} |(t+\alpha) a+\beta a^\perp|\dd{t} =  |a| \min_{\alpha\in
\R}\int^1_{0} |t+\alpha|\dd{t} =\frac{ |a|}{4}.
\end{aligned}
\end{equation*}
From \eqref{eq:var-sigma-ep1} and \eqref{eq:var-sigma-ep2}, since  $(u_\eps)_\eps\subset W^{1,1}(\Omega;\R^2)$ as a weakly$^\ast$ converging sequence is uniformly bounded in $BV(\Omega;\R^2)$, and
recalling that \(R^{i^+_\epsi}_\epsi  =  R^{i^+_\epsi-1}_\epsi\)
if \(\eps\Yrig\cap P_{\eps}^{i^+_\epsi}
=\emptyset \),  we
conclude that 
\begin{align}
 |D\Sigma_\eps| (-1,1)  \leq 4\sqrt{2}\int_\Omega \abs{\nabla
u_\eps}\dd{x}\leq C. \label{eq:var-sigma-ep}
\end{align}
 The convergence in~\eqref{eq:convSigma_ep} follows now from the weak$^\ast$ relative compactness of  bounded sequences in $BV(-1,1;\R^{2\times 2})$ (see Section~\ref{subs:BV}), together with the fact that strong $L^1$-convergence is length and angle preserving. The latter guarantees that the limit function $R\in BV(-1,1;\R^{2\times 2})$ takes values only in $SO(2)$. 

 Next, we show that there is \(\psi\in BV (-1,1 ;\RR^2) \) such
that 
\begin{equation}
\label{eq:claimuinB}
\begin{aligned}
u(x) = R(x_2)x + \psi(x_2)
\end{aligned}
\end{equation}
for a.e.\!  \(x\in\Omega\), which
implies that \(u\in \Bcal\) and concludes the proof. To this end, we define auxiliary functions $\sigma_\eps$, $b_\eps \in
L^\infty(\Omega;\R^2)$ for $\eps>0$ by setting
 \begin{align*}
 \sigma_\eps(x) = \sum_{i\in I_\eps} (R_\eps^ix) \mathbbm{1}_{P_\eps^i}(x)
 \qquad\text{and}\qquad b_\eps(x)=\sum_{i\in I_\eps} b_\eps^i
\mathbbm{1}_{P_\eps^i}(x)
 \end{align*}
 for $x\in \Omega$, where  \(R^{i^+_\epsi}_\epsi:= R^{i^+_\epsi-1}_\epsi\)
and   \(b^{i^+_\epsi}_\epsi:= b^{i^+_\epsi-1}_\epsi\)
 if \(\eps\Yrig\cap P_{\eps}^{i^+_\epsi}
=\emptyset \). Further, let $w_\eps:=\sigma_\eps+b_\eps$.

By  Poincar{\'e}'s inequality applied in the  $x_2$-direction,
we obtain
\begin{align*}
\int_\Omega \abs{u_\eps-w_\eps}\dd{x}  =\, &\sum_{i\in  I_\eps:\,  \eps\Ysoft\cap P_{\eps}^{i}
\not=\emptyset}
\int^1_{0}\int_{\max\{\eps( i-1),-1\}}^{\min\{\eps (i-1+\lambda),1\}}
\abs{u_\eps-w_\eps}\dd{x_2}\dd{x_1} \\  \leq\, &\eps\lambda \sum_{i\in
I_\eps} \int_{\eps\Ysoft \cap P_\eps^i} \abs{\partial_2 u_\eps-R_\eps^i
e_2}\dd{x}\leq\eps\lambda(\norm{u_\eps}_{W^{1,1}(\Omega;\R^2)}+\abs{\Omega})\leq
C\eps.
\end{align*}
Consequently,
  \begin{align}\label{conv_weps}
 w_\eps\to u\qquad \text{ in $L^1(\Omega;\R^2).$}
 \end{align}
Moreover,   for $x\in \Omega$, 
\begin{equation*}
\begin{aligned}
|\sigma_\epsi (x)  - R(x_2)x| \leq \Big|\sum_{i\in I_{\eps}} (R^i_\epsi - R(x_2))
\mathbbm{1}_{P_\eps^i}(x)\Big| | x| \leq \sqrt2 |\Sigma_\epsi(x_2) - R(x_2)|,
\end{aligned}
\end{equation*}
which, together with \eqref{eq:convSigma_ep}, proves that
\begin{align}\label{conv_sigmaeps}
 \sigma_\eps\to \sigma\qquad \text{ in $L^1(\Omega;\R^2),$}
 \end{align}
where \(\sigma(x):=R(x_2)x \in BV(\Omega;\RR^2) \). \color{black}

Finally, exploiting~\eqref{conv_weps} and  \eqref{conv_sigmaeps}, we conclude that there exists $b\in BV(\Omega;\R^2)$ such that $b_\eps\to b$ in $L^1(\Omega;\R^2)$.  In view of the one-dimensional character  of the stripes \(P^i_\epsi\), we infer that $\partial_1
b=0$. Eventually, identifying \(b\) with a function $\psi\in BV(-1,1;\R^2)$  yields  \eqref{eq:claimuinB}. 

\end{proof}

Next, we prove that the converse inclusion  of~\eqref{B0subsetB}   holds.  In the following,  let $I_{\rm rig}$ be the projection of $Y_{\rm rig}$ onto the second component; that is, $I_{\rm rig}$ corresponds to the $1$-periodic extension of the interval $[\lambda,1)$. Analogously, we write $I_{\rm soft}$ for the $1$-periodic extension of $[0, \lambda)$.

\begin{proposition}[Approximation of maps in $\Bcal$] \label{prop:approx1}
 Let  $\Omega=(0,1)\times (-1,1)$. Then, 
\begin{align}\label{B0supsetB} \Bcal_0 \supset \Bcal.
\end{align} 
Here, \(\Bcal_0\) and \(\Bcal\) are the sets from~\eqref{eq:def-B0} and \eqref{eq:def-B}, respectively. 
\end{proposition}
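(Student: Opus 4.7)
The plan is to build the approximating sequence by combining rigid-motion pieces on the stiff layers with linear-in-$x_2$ interpolations on the soft layers, replacing the one-dimensional ingredients $R$ and $\psi$ by layer-constant approximations furnished by the forthcoming Lemma~\ref{lem:approx_layers}. Using that lemma, I would first construct $R_\eps\in BV(-1,1;SO(2))$ and $\psi_\eps\in BV(-1,1;\RR^2)$, both constant on every connected component of $\eps I_{\rm rig}\cap(-1,1)$, satisfying $R_\eps\weaklystar R$ and $\psi_\eps\weaklystar \psi$ in $BV(-1,1)$, and with total variations uniformly controlled by $|DR|((-1,1))$ and $|D\psi|((-1,1))$. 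The $SO(2)$-valuedness of $R_\eps$ is arranged by lifting $R$ to an angle $\theta\in BV(-1,1)$ via the exponential parametrization~\eqref{eq:Rtrigon}, approximating the scalar $\theta$ by a layer-constant $\theta_\eps$ in the sense of the lemma, and setting $R_\eps(x_2):=\exp(\theta_\eps(x_2)J)$ with $J=\begin{pmatrix}0&-1\\1&0\end{pmatrix}$.

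Next, I would extend $R_\eps$ and $\psi_\eps$ from the rigid layers to all of $(-1,1)$ by affine interpolation in $x_2$ on every soft component of $\eps I_{\rm soft}\cap(-1,1)$, and define $u_\eps(x):=R_\eps(x_2)x+\psi_\eps(x_2)$ for $x\in\Omega$. By construction $u_\eps$ is continuous and piecewise affine on each period strip, so $u_\eps\in W^{1,1}(\Omega;\RR^2)$; moreover, on every rigid layer $R_\eps(x_2)$ equals a constant rotation $R_\eps^i\in SO(2)$ and $\nabla u_\eps=R_\eps^i\in SO(2)$ there, whence $u_\eps\in\Bcal_\eps$.

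To verify $u_\eps\weaklystar u$ in $BV(\Omega;\RR^2)$, it suffices, by the characterization of weak$^\ast$-convergence recalled in Subsection~\ref{subs:BV}, to show that $u_\eps\to u$ in $L^1(\Omega;\RR^2)$ and $\sup_\eps|Du_\eps|(\Omega)<\infty$. The former is immediate from the estimate $\|u_\eps-u\|_{L^1}\leq C\bigl(\|R_\eps-R\|_{L^1(-1,1)}+\|\psi_\eps-\psi\|_{L^1(-1,1)}\bigr)$, which tends to $0$ because strong $L^1$-convergence is contained in weak$^\ast$-convergence in $BV(-1,1)$. For the uniform BV-bound, on each soft component the gradient reads $\nabla u_\eps=R_\eps(x_2)+(R_\eps'(x_2)x+\psi_\eps'(x_2))\otimes e_2$; integrating, the zeroth-order summand contributes at most $\sqrt{2}$ times the layer area while the interpolation summand contributes an amount comparable to $|R_\eps^i-R_\eps^{i-1}|+|\psi_\eps^i-\psi_\eps^{i-1}|$. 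Summing over the layer index and adding the bounded rigid-layer contribution yields $|Du_\eps|(\Omega)\leq C\bigl(|\Omega|+|DR_\eps|((-1,1))+|D\psi_\eps|((-1,1))\bigr)$, uniformly in $\eps$.

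The main obstacle is the nonlinearity of the $SO(2)$-constraint in the first step: ordinary mollification or truncation does not produce $SO(2)$-valued approximants that are simultaneously piecewise constant on the rigid-layer segments. Handling this properly---whether via the angular lift sketched above (with careful treatment of the $2\pi$-ambiguity and of the Cantor part of $R$) or via a construction tailored directly to $SO(2)$-valued $BV$-maps---is the substantive content of Lemma~\ref{lem:approx_layers}; once that lemma is in hand, the two-dimensional construction and the convergence analysis above amount to bookkeeping.
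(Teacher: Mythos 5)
Your proposal is correct and follows essentially the same route as the paper: use Lemma~\ref{lem:approx_layers} to produce one-dimensional approximants of $R$ and $\psi$ with vanishing derivative on $\eps I_{\rm rig}$, set $u_\eps(x):=R_\eps(x_2)x+\psi_\eps(x_2)$, observe that $\nabla u_\eps=R_\eps\in SO(2)$ on the rigid layers, and combine $L^1$-convergence with a uniform $BV$-bound to obtain weak$^\ast$-convergence. Two small remarks: the paper's lemma actually outputs $W^{1,\infty}(I;SO(2))$ and $W^{1,\infty}(I;\R^d)$ functions directly (via a stopped/accelerated time reparametrization of smooth strict approximants), so the separate affine-interpolation step you describe is already subsumed; and for the $SO(2)$-constraint the paper invokes the manifold-valued $BV$ density result of Giaquinta--Mucci rather than the angular lift, though your lift-based alternative would also work once the $2\pi$-ambiguity and Cantor part are handled as you note.
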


\begin{proof}
Let \(u\in\Bcal\), and let  $R\in BV(-1,1;SO(2))$ and $\psi\in BV(-1,1;\R^2)$ be such that 
\begin{align*}
u(x)=R(x_2)x+\psi(x_2)
\end{align*}
for a.e.\! \(x\in \Omega\).
Using Lemma~\ref{lem:approx_layers} below,  as well as the fact that strict convergence implies weak$^\ast$ convergence in $BV$,  we construct sequences $(R_\eps)_\eps\subset W^{1, \infty}(-1,1;SO(2))$ and $(\psi_\eps)_\eps\subset W^{1, \infty} (-1,1;\R^2) $ such that 
\begin{align}
R_\eps'=0
\quad &\text{ and } \quad \psi_\eps'=0\quad \text{\  \ on } \eps I_{\rm rig}\cap (-1,1),  \label{eq:restIrig}\\
R_\eps\weaklystar R \text{ in $BV(-1,1;\R^{2\times 2})$} \quad &\text{ and }\quad  \psi_\eps\weaklystar \psi\quad \text{ in $BV(-1,1;\R^2)$.}\label{eq:conv1}
\end{align}

Define $u_\eps(x) := R_\eps(x_2) x + \psi_\eps(x_2)$ for \(x\in\Omega\). Then, $u_{\eps}\in W^{1, \infty}(\Omega;\R^2)$ for every $\eps$, with 
\begin{align*}
\nabla u_\eps (x) = R_\eps(x_2) + R_\eps'(x_2) x\otimes
e_2 + \psi_\eps'(x_2)\otimes e_2 \end{align*} 
for a.e.\! \(x\in \Omega\).
In particular, \(\nabla u_\epsi = R_\epsi \in SO(2)\) a.e.\! in $\eps\Yrig\cap \Omega$ by \eqref{eq:restIrig}; hence, \(u_\epsi\in \Bcal_\epsi\). Moreover,  \(\sup_\epsi \Vert \nabla u_\epsi\Vert_{L^1(\Omega;\RR^{2\times 2})}<\infty\) and  \(u_\epsi
\to u\) in \(L^1(\Omega;\R^2)\) by \eqref{eq:conv1}, from which we conclude that   $u_\eps\weaklystar u$  in $BV(\Omega;\R^2)$. This completes the proof. 
\end{proof}

 The next lemma states  a one-dimensional approximation
result of $BV$-maps by Lipschitz functions that are constant 
on $\eps I_{\rm rig}$, which was an important ingredient in the previous proof.

\begin{lemma}[\boldmath{$1D$}-approximation by maps  constant on  \boldmath{$\eps I_{\rm rig}$}]\label{lem:approx_layers}
Let  $I=(a, b)\subset \R$ and   $w\in BV(I;\R^d)$.  Then,  
there exists a sequence $(w_\eps)_\eps\subset W^{1, \infty}(I;\R^d)$
with the following three properties:
\begin{itemize}
\item[$(i)$] $w_\eps\to w$ in $L^1(I;\R^d)$;\\[-0.2cm]
\item[$(ii)$] $\displaystyle \int_I |w_\eps'| \dd{t} \to |Dw|(I)$;\\[-0.2cm]
\item[$(iii)$] $w_\eps' = 0$ on $\eps I_{\rm rig}\cap  I$.
\end{itemize}
Moreover, if \(w\) takes values in \(SO(2)\) and \(w\in BV(I;SO(2))\), then  each \(w_\epsi\) may be taken in \(W^{1,\infty}(I;SO(2))\).
\end{lemma}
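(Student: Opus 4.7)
The plan is to build $w_\epsi$ by a reparametrization: introduce an explicit non-decreasing Lipschitz map $\phi_\eps\colon I\to I$ that is constant on the rigid layers and affine (with uniform slope) on the soft layers, and set $w_\eps:=w\circ\phi_\eps$. Concretely, with $c_\eps:=(b-a)/\Lcal^1(\eps I_{\rm soft}\cap I)\to 1/\lambda$, I would define
\begin{align*}
\phi_\eps(t) := a + c_\eps \int_a^t \ONE_{\eps I_{\rm soft}}(s)\,\di s,\qquad t\in I.
\end{align*}
Then $\phi_\eps$ is Lipschitz and non-decreasing with $\phi_\eps(a)=a$, $\phi_\eps(b)=b$, slope $c_\eps$ on the soft layers and $0$ on the rigid ones, and $\sup_{t\in I}|\phi_\eps(t)-t|=O(\eps)$, so $\phi_\eps\to\id_I$ uniformly.

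For Lipschitz $w\in W^{1,\infty}(I;\R^d)$, the chain rule gives $w_\eps'(t)=w'(\phi_\eps(t))\phi_\eps'(t)$, which vanishes on $\eps I_{\rm rig}\cap I$, yielding (iii); uniform continuity of $w$ together with $\phi_\eps\to\id_I$ uniformly yields $w_\eps\to w$ uniformly and hence (i). The key observation for (ii) is that the images under $\phi_\eps$ of consecutive soft sub-intervals are adjacent in $I$ (because $\phi_\eps$ is constant on the rigid layers separating them), so $\phi_\eps$ restricts to a piecewise affine bijection from $\eps I_{\rm soft}\cap I$ onto $I$ modulo a null set. A piecewise change of variables then gives the \emph{exact} identity
\begin{align*}
\int_I |w_\eps'|\,\di t \;=\; \int_{\eps I_{\rm soft}\cap I}|w'(\phi_\eps(t))|\,c_\eps\,\di t \;=\; \int_I |w'|\,\di s \;=\; |Dw|(I)
\end{align*}
for every $\eps$.

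For general $w\in BV(I;\R^d)$, I would invoke the strict density of $W^{1,\infty}(I;\R^d)$ in $BV(I;\R^d)$ to pick $(w_k)_k$ with $w_k\to w$ in $L^1$ and $\int_I|w_k'|\,\di t\to|Dw|(I)$, apply the previous construction to each $w_k$, and extract a diagonal sequence (Attouch's lemma) to obtain $(w_\eps)_\eps$ satisfying (i)--(iii). In the $SO(2)$-valued case, note that composition with the scalar map $\phi_\eps$ preserves values in $SO(2)$, so the construction gives $SO(2)$-valued $w_\eps$ whenever $w\in W^{1,\infty}(I;SO(2))$; for a general $w\in BV(I;SO(2))$, I would first produce Lipschitz $SO(2)$-valued approximants by lifting to an angle $\theta\in BV(I;\R)$, regularizing $\theta$ strictly in $BV$, and composing back via the $1$-Lipschitz map $\vartheta\mapsto R_\vartheta$ from \eqref{eq:Rtrigon}.

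The principal technical point is arranging that (ii) holds as an exact equality for every $\eps$, rather than merely in the limit, since this is what makes the subsequent diagonalization over $k$ and $\eps$ transparent. The exact equality is engineered by choosing $c_\eps$ so that $\phi_\eps(b)=b$ and by exploiting the adjacency of the images of successive soft sub-intervals under $\phi_\eps$; any rescaling error caused by partial soft layers near the boundary of $I$ is absorbed into the constant $c_\eps$ itself. The $SO(2)$-valued lifting is an additional subtlety that is nevertheless harmless in the one-dimensional setting.
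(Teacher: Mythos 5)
Your construction for $\R^d$-valued $w$ is correct and essentially mirrors the paper's proof: both reparametrize via a nondecreasing Lipschitz map that is frozen on $\eps I_{\rm rig}$ and uniformly dilated on $\eps I_{\rm soft}$, approximate $w$ strictly in $BV$ by Lipschitz (resp.\ smooth) intermediates, compose, and diagonalize with Attouch's lemma. Your choice of normalization constant $c_\eps = (b-a)/\Lcal^1(\eps I_{\rm soft}\cap I)$ so that $\phi_\eps(b)=b$ is a genuine, if minor, improvement over the paper's slope-$1/\lambda$ map followed by truncation at $b_\eps$: you get the \emph{exact} identity $\int_I|w_\eps'|\,\di t = |Dw|(I)$ for every Lipschitz intermediate and every $\eps$, whereas the paper only obtains this in the iterated limit $\eps\to0$ then $\delta\to0$. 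Either way the diagonalization works, but yours is cleaner.

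The $SO(2)$-valued case, however, has a real gap. Your plan is to lift $w=R_\theta$ with $\theta\in BV(I;\R)$, regularize $\theta$ strictly by smooth $\theta_k$, and set $v_k:=R_{\theta_k}$. This gives
\begin{align*}
\int_I |v_k'|\,\di t = \sqrt{2}\int_I |\theta_k'|\,\di t \longrightarrow \sqrt{2}\,|D\theta|(I),
\end{align*}
but $|Dw|(I)=|DR_\theta|(I)$ is \emph{strictly smaller} than $\sqrt{2}\,|D\theta|(I)$ whenever $\theta$ has jumps: for a jump of size $[\theta]_j\neq0$, the Euclidean contribution to $|DR_\theta|(I)$ is $|R_{\theta^+}-R_{\theta^-}| = 2\sqrt{2}\,|\sin([\theta]_j/2)|$, which is strictly below $\sqrt{2}\,|[\theta]_j|$. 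Hence the $v_k$ do not converge strictly to $w$, the subsequent $\phi_\eps$-composition (which preserves total variation exactly) inherits the overshoot, and property $(ii)$ fails. The underlying obstruction is geometric: any Lipschitz $SO(2)$-valued curve joining $A\neq B$ has arclength at least the geodesic distance in $SO(2)$, which strictly exceeds the Euclidean chord length $|A-B|$, so no $W^{1,\infty}(I;SO(2))$ sequence can strictly recover the extrinsic total variation of a jump. (As a side remark, the map $\vartheta\mapsto R_\vartheta$ in the Frobenius norm is $\sqrt{2}$-Lipschitz rather than $1$-Lipschitz, though this factor is not what causes the problem.) The paper sidesteps your lifting by invoking the strict density result for manifold-valued $BV$ maps in \cite{GiM07}; the lifting argument as you wrote it does not reproduce that conclusion, and you would either need to cite that result or weaken $(ii)$ to a $\limsup$ bound --- which, it should be said, is all that is actually used in Proposition~\ref{prop:approx1}.
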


\begin{proof}
Let $w\in BV(I;  \R^d)$. By~\cite[Theorem~3.9, Remark~~3.22]{AFP00},  $w$ can be approximated by a sequence of smooth functions $(v_\delta)_\delta\subset C^\infty(\bar{I}; \R^d)$ in the sense of strict convergence in $BV$; that is, 
\begin{align}\label{conv12}
v_\delta\to w \text{ in $L^1(I;  \R^d)$}\quad \text{and}\quad \int_I |v_\delta'|\dd{t}\to |Dw|( I)
\end{align}
as $\delta\to 0$. To obtain property $(iii)$, we will reparametrize $v_\delta$ so that it is \textit{stopped} on the set $\eps I_{\rm rig}$ and \textit{accelerated} otherwise, and eventually apply a diagonalization argument.

We start by introducing  for every $\eps>0$  a Lipschitz function $\varphi_\eps: \RR\to \RR$ defined by
\begin{equation*}
\begin{aligned}
\ffi_\epsi(t):=\begin{cases}
\frac{1}{\lambda} (t-i\epsi) + i\epsi & \text{if } i\epsi \leq t \leq i\epsi + \lambda\epsi,\\
(i+1)\epsi & \text{if } i\epsi
+ \lambda\epsi\leq t < \epsi (i+1),
\end{cases}
\end{aligned}
\end{equation*}
for each \(i\in\ZZ\) and \(t\in \epsi[i, i+1)\). For all \(t\in\RR\), we have \(t\leq \ffi_\epsi(t) \leq t + \epsi(1-\lambda)\) and \(\ffi_\epsi' (t) = \psi (\frac{t}{\epsi})\), where \(\psi\) is the 1-periodic function such that \(\psi(t) =\frac{1}{\lambda}\) if \(0\leq t\leq \lambda\), and  \(\psi(t) =0\) if \(\lambda< t< 1\). By the Riemann--Lebesgue lemma on weak convergence of periodically oscillating
sequences, it follows that $\psi (\frac{\cdot}{\epsi})\weaklystar 1$  in $L^{\infty}(\RR)$.
Thus, $\varphi_{\eps}\weaklystar \ffi$  in $W^{1,\infty}_{loc}(
\R)$, where \(\ffi(t):=t\).
In particular, $\varphi_\eps$ converges uniformly to \(\ffi\)  on every compact set 
$K\subset \RR$. 

Next, we define  for $\eps>0$  a Lipschitz function $\tilde\varphi_\eps: \bar I\to \bar I$
by setting
\begin{equation*}
\begin{aligned}
\tilde\ffi_\epsi(t):=\begin{cases}
\ffi_\epsi(t) & \text{if } a \leq t \leq b_\epsi,
\\
b & b_\epsi\leq t \leq b, 
\end{cases}
\end{aligned}
\end{equation*}
where \(b_\epsi\in (a,b]\) is such that \(\ffi_\epsi (b_\epsi) =b\). Note that by definition of \(\ffi_\epsi\), there exists at least one such  \(b_\epsi\). 
We claim that \(b_\epsi \to b\)  as $\eps\to 0$.  In fact, extracting a subsequence if necessary, we have \(b_\epsi \to c\) for some \(c\in [a,b]\). Then,  
\begin{equation*}
\begin{aligned}
|b-c| = |\ffi_\epsi(b_\epsi) - \ffi(c)| \leq |\ffi_\epsi(b_\epsi) - \ffi_\epsi(c)| + |\ffi_\epsi(c) - \ffi(c)|\leq \tfrac{1}{\lambda}|b_\epsi - c|+ |\ffi_\epsi(c) - \ffi(c)|,
\end{aligned}
\end{equation*}
from which we infer that \(b=c\) by letting \(\epsi\to0\). Because the limit does not depend on the subsequence, the whole sequence \((b_\epsi)_\epsi\) converges to \(b\). Consequently, \(\tilde \ffi_\epsi(t) \to \ffi(t)=t\) for all \(t\in \bar I\), and since also 
\(\Vert \tilde \ffi_\epsi\Vert_{W^{1,\infty}(I)} = O(1)\) as \(\epsi\to0\),  we deduce that 
\begin{equation}
\label{eq:ontildeffiepsi}
\begin{aligned}
\tilde \ffi_\epsi \weaklystar \ffi \text{ in } W^{1,\infty}(I) \quad \text{and} \quad \Vert \tilde \ffi_\epsi - \ffi\Vert_{L^\infty(I)} \to 0.
\end{aligned}
\end{equation}

Finally, we set $w_{\eps, \delta} := v_\delta\circ \tilde\varphi_\eps\in W^{1, \infty}(I; \R^d)$, and observe that
\begin{align*}
\norm{w_{\eps,\delta} - w}_{L^1(I; \R^d)} \leq \norm{v_\delta\circ \tilde\varphi_\eps- v_\delta}_{L^1(I; \R^d)} + \norm{v_\delta - w}_{L^1(I; \R^d)} \quad \text{and}\quad 
\int_I |w_{\eps, \delta}'|\dd{t} = \int_I |v_\delta'\circ
\tilde\varphi_\eps|\, \tilde \varphi_{\eps}' \dd{t}. 
\end{align*}
Hence, by ~\eqref{conv12}, \eqref{eq:ontildeffiepsi},   the boundedness of each $v_\delta$ and $v_\delta'$, and  a weak-strong
convergence argument, it follows that 
\begin{align}
&\lim_{\delta\to 0}\lim_{\eps\to 0}\norm{w_{\eps, \delta} - w}_{L^1  (I;\R^d)}=0, \label{conv23}\\ & \lim_{\delta\to 0}\lim_{\eps\to 0}\int_I |w_{\eps, \delta}'|\dd{t} =\lim_{\delta\to 0} \int_I |v_\delta'\circ
\varphi|\, \varphi' \dd{t} =\lim_{\delta\to
0} \int_I |v_\delta'|\dd{t}=|Dw|( I). \label{conv24}
\end{align}

 In view of~\eqref{conv23}  and~\eqref{conv24},  we  apply  Attouch's diagonalization lemma \cite{Att84} to find a sequence  $(w_\eps)_\eps\subset W^{1,1}(I;\R^d)$  with $w_\eps:=w_{\eps, \delta(\eps)}$ satisfying $(i)$ and $(ii)$. We observe further that  each $w_\eps$ 
satisfies $(iii)$ by construction. 

To conclude, we address  the issue of constraint-preserving approximations for \(w\in BV(I;SO(2))\).  In this case, we argue as above, but replace the density argument leading to~\eqref{conv12} by its analogue for $BV$ functions with values on manifolds, see
\cite[Theorem~1.2]{GiM07}. This allows us to assume that \(v_\delta\in C^\infty(\bar{I}; SO(2)) \), and eventually yields  \(w_\epsi \in W^{1,\infty}(I;SO(2))\).  
\end{proof}

We are now in a position to prove Theorem~\ref{thm:aymrigintro}.

\begin{proof}[Proof of Theorem~\ref{thm:aymrigintro}]
In view of the discussion on  locally and globally one-dimensional  functions in Section~\ref{subs:geom}, it suffices
to prove the   statement  on rectangles with sides parallel
to the axes.  A simple modification
of the proofs of Propositions~\ref{prop:asymptotic_rigidity2}
and~\ref{prop:approx1} shows that these results hold for any
such rectangle. Then, Theorem~\ref{thm:aymrigintro}
follows  by  extension and exhaustion arguments in the spirit of~ \cite[Lemma~A.2]{ChK18}. 
\end{proof}

\begin{remark}[The higher dimensional setting]\label{rk:high-dim}
We point out that the results of Theorem~\ref{thm:aymrigintro} continue to hold for domains $\Omega\subset \R^N$, $N\in \mathbb{N}$, satisfying the flatness and cross-connectedness assumptions in \cite[Definitions 3.6 and 3.7]{ChK18}. We omit the proof here as it follows from that of Theorem~\ref{thm:aymrigintro} up to minor adaptations.  Notice in particular that~\cite[Lemma~A1]{ChK17} provides a higher-dimensional version of~\eqref{eq:var-sigma-ep2}.
\end{remark}

We conclude this section by  characterizing two  special subsets
of \(\Bcal\) (see \eqref{eq:def-B}), which will be useful in the following. Using  \eqref{eq:onDu1},  it can be checked that  
\begin{align}\label{BcapW11}
 \Bcal\cap W^{1,1}(\Omega;\R^2) = \big\{u \in W^{1,1}(\Omega;\R^2)\!:\,
 &\,u(x)
= R(x_2)x +\psi(x_2) \text{ for a.e.\! }x\in  \Omega, 
\\
 &\text{with }  R\in W^{1,1}(a_\Omega,b_\Omega;SO(2))  \text{ and } \psi\in W^{1,1}(a_\Omega,b_\Omega;\R^2)\big
\}\notag
\end{align}
and 
 \begin{align}\label{BcapSBV}
 \Bcal\cap SBV(\Omega;\R^2) = \big\{u\in SBV(\Omega;\R^2)\!:\,&\,u(x)
= R(x_2)x +\psi(x_2) \text{ for a.e.\! }x\in  \Omega,
\\
 &\text{with }  R\in SBV(a_\Omega,b_\Omega;SO(2)) \text{ and }\psi\in SBV(a_\Omega,b_\Omega;\R^2)\big\}. \notag
 \end{align}
 
 By definition,  and accounting  for the fact that \(R\) takes values in \(SO(2)\), the jump set of $u\in \Bcal\cap SBV(\Omega;\R^2)$
is related to the jump sets of  $R$ and $\psi$  via  
\begin{align*}
J_u =  [(c_\Omega,d_\Omega)\times
(J_R\cup J_\psi)]\cap\Omega,
\end{align*} 
cf.~\eqref{cOmega}.

\section{Asymptotic behavior of  admissible layered
deformations}\label{sec:ad}
In this section, we prove Theorem~\ref{thm:SBVinfty},  which characterizes the asymptotic behavior of deformations of bilayered materials that coincide with rigid body rotations on the stiffer layers, and are subject to a single slip constraint on the softer layers. The latter is described with the help of the set 
\begin{equation}\label{Me1}
\begin{aligned}
\Mcal_{e_1}&=\{F\in \R^{2\times 2}\!: \, \det F=1\text{ and $|Fe_1| = 1$}\}\\
&= \{F\in \R^{2\times 2}\!:\, F=R(\Ibb+\gamma e_1\otimes
e_2) \text{ with $R\in SO(2)$ and $\gamma\in \R$}\}. 
\end{aligned}
\end{equation}
As in the previous section, we consider $\Omega=(0,1)\times (-1,1)$ for simplicity.  The results for general $x_1$-connected domains follow as  in the proof of Theorem~\ref{thm:aymrigintro}.

Using the representations of \(\Mcal_{e_1}\)  in~\eqref{Me1}  and recalling the
sets $\mathcal{B}_{\eps}$ introduced in \eqref{eq:def-B-ep},
  the sets of admissible
layered deformations defined in \eqref{eq:def-Aep}
admit the  equivalent representations
\begin{align}
\Acal_\eps &  =  \Bcal_\eps \cap \{u\in W^{1,1}(\Omega;\R^2): \nabla
u\in \Mcal_{e_1}\text{  a.e. in } \Omega\} \notag\\
&=\{ u\in W^{1,1}(\Omega;\R^2): \nabla u = R(\Ibb+\gamma e_1\otimes e_2) \text{ with $R\in L^\infty(\Omega;SO(2))$ and}\notag\\ 
& \qquad \qquad \qquad \qquad \qquad \text{$\gamma\in L^1(\Omega)$ such that $\gamma=0$ in $\eps\Yrig\cap\Omega\}$}. \label{eq:def-Aepalt}
\end{align} 
In the sequel, according to the context, we will  always  adopt the most convenient representation.

In analogy with \(\Bcal_0\) defined in \eqref{eq:def-B0},
we introduce the set 
\begin{align}\label{eq:def-A0}
\Acal_0:= \{u\in BV(\Omega;\R^2): &\text{ there exists }(u_{\eps})_{\eps}\subset W^{1,1}(\Omega;\R^2)\text{  with $u_\eps\in \Acal_\eps$ for all $\eps$ }\\
&\text{ such that  } u_\eps\weaklystar u \text{ in
$BV(\Omega;\R^2)$} \}\notag
\end{align} 
of \emph{asymptotically
admissible deformations}.  We aim at  characterizing $\Acal_0$, or suitable subclasses thereof,  in terms
of the set \(\Acal\) introduced in~\eqref{eq:def-A}. Note that
\begin{align}\label{eq:def-Aalt}
\Acal & = \Bcal \cap \{u\in BV(\Omega;\R^2): \det \nabla u=1\text{   a.e.~in  $\Omega$}\},
\end{align}
where $\mathcal{B}$ is given by \eqref{eq:def-B}.
Moreover, recalling the notation for the distributional derivative of one-dimensional \(BV\)-functions discussed in Section~\ref{subs:BV},   we can equivalently express $\Acal$ as follows.

\begin{proposition}\label{prop:charA} Let   \(\Omega=(0,1) \times (-1,1)\). Then, \(\Acal\) from~\eqref{eq:def-A} admits these  two alternative representations: 
\begin{align}
 \Acal
=  \{ u\in BV(\Omega;\R^2)\!: &\,\,\nabla u(x)= R(x_2)(\Ibb+\gamma( x_2 )
e_1\otimes e_2) \text{ for a.e.\! }x\in  \Omega, \text{ 
with }\notag
\\ 
 &\,\,R\in BV(-1,1;SO(2)),\,  \gamma\in L^1( -1,1),\, \text{and } (D^su)e_1=0\}\label{eq:charAa}
\end{align}
and
\begin{align}\label{charA}
\Acal  =\{u\in BV(\Omega;\R^2)\!:\, & \, u(x) =  R(x_2)x +\psi(x_2) \text{ for a.e.\! }x\in  \Omega, \text{ 
with }  R\in BV(-1,1;SO(2)) \notag
\\
 &\,  \text{and } \psi\in BV(-1,1;\R^2)   \text{  such that 
 $\psi' \cdot Re_2=0$ and $R'=0$ a.e.\!~in $(-1,1)$}\}.
\end{align}
\end{proposition}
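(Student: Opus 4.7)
The plan is to derive both characterizations from the original definition \eqref{eq:def-A} by systematically exploiting the decomposition formula \eqref{eq:onDu1} for $Du$ of a function $u \in \Bcal$.

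First, I would prove the equivalence between \eqref{eq:def-A} and \eqref{charA}. Since membership in $\Acal$ already forces $u(x) = R(x_2)x + \psi(x_2)$ with $R\in BV(-1,1;SO(2))$ and $\psi\in BV(-1,1;\R^2)$, the task reduces to translating the constraint $\det\nabla u = 1$ into conditions on $R$ and $\psi$. From \eqref{eq:onDu1}, $\nabla u(x) = R(x_2) + (R'(x_2)x + \psi'(x_2))\otimes e_2$. Factoring $\nabla u = R\bigl(\Ibb + R^T(R'x+\psi')\otimes e_2\bigr)$, using $\det(\Ibb + a\otimes e_2) = 1 + a\cdot e_2$, and invoking the trigonometric representation \eqref{eq:Rtrigon} (which yields $R'e_1\cdot Re_2 = \theta'$ and $R'e_2\cdot Re_2 = 0$), I obtain
$$\det\nabla u(x) = 1 + \theta'(x_2)\,x_1 + \psi'(x_2)\cdot R(x_2)e_2.$$
Requiring this to equal $1$ for a.e.\ $x\in\Omega = (0,1)\times(-1,1)$ and separating the $x_1$-dependence forces both $\theta' = 0$ (equivalently, $R' = 0$) and $\psi'\cdot Re_2 = 0$ a.e. in $(-1,1)$, which is exactly \eqref{charA}. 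The reverse implication is then an immediate verification using the same formula.

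Next, I would show equivalence with \eqref{eq:charAa}. Assuming \eqref{charA}, the condition $\psi'\cdot Re_2 = 0$ forces $\psi' = \gamma\,Re_1$ with $\gamma := \psi'\cdot Re_1 \in L^1(-1,1)$, and formula \eqref{eq:onDu1} collapses to $\nabla u = R(\Ibb + \gamma e_1\otimes e_2)$. The property $(D^s u)e_1 = 0$ follows directly from \eqref{eq:onDu1}, where $D^s u$ is expressed as a measure tensored with $e_2$.

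The converse direction, namely recovering \eqref{charA} from \eqref{eq:charAa}, is the main step. From the prescribed form of $\nabla u$ one has $\nabla u\,e_1 = R(x_2)e_1$, and the hypothesis $(D^s u)e_1 = 0$ ensures
$$D_1 u = (Du)e_1 = R(x_2)e_1\,\Lcal^2\lfloor\Omega.$$
On the other hand, applying \eqref{eq:onDu1} to the auxiliary map $v(x) := R(x_2)x$ yields $D_1 v = R(x_2)e_1\,\Lcal^2\lfloor\Omega$ as well, since its singular part is concentrated along $e_2$. Hence $w := u - v \in BV(\Omega;\R^2)$ satisfies $D_1 w = 0$. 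Invoking the discussion in Section~\ref{subs:geom} on $x_1$-connected domains, $w$ is globally one-dimensional in the $e_2$-direction, so $w(x) = \psi(x_2)$ with $\psi\in BV(-1,1;\R^2)$ via a slicing/Fubini argument. This produces $u(x) = R(x_2)x + \psi(x_2)$, and the remaining constraints $R' = 0$ and $\psi'\cdot Re_2 = 0$ are then read off by matching the resulting approximate gradient with the prescribed form in \eqref{eq:charAa}. The main technical obstacle is precisely this last step: passing rigorously from $D_1 w = 0$ to a one-dimensional representative in $BV$, where the $x_1$-connectedness hypothesis on $\Omega$ is essential.
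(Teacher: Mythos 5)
Your proposal is correct and follows essentially the same route as the paper's proof, translating $\det\nabla u = 1$ via \eqref{eq:onDu1}--\eqref{eq:Rtrigon}, separating the $x_1$-coefficients to deduce $R'=0$ and $\psi'\cdot Re_2=0$, and recovering the $\Bcal$-structure from $(Du)e_1 = Re_1\,\Lcal^2\lfloor\Omega$. The only cosmetic difference is in the final step, where you subtract the auxiliary map $v(x)=R(x_2)x$ to reduce to $D_1 w = 0$, while the paper applies the Du Bois--Reymond lemma directly to obtain $u(x)=R(x_2)x_1e_1+\phi(x_2)$ and later sets $\psi:=\phi - Rx_2e_2$; both variants hinge on the same one-dimensionality principle tied to $x_1$-connectedness.
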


\begin{proof}
 Let \(\tilde A\)
and \(\hat \Acal\) denote the sets on the right-hand side of \eqref{eq:charAa} and \eqref{charA}, respectively. We will show
that
 \(\Acal\subset\tilde\Acal \cap \hat\Acal \),  \(\hat\Acal\subset\Acal\),
and  \(\tilde\Acal\subset\hat\Acal\),  from which
\eqref{eq:charAa} and \eqref{charA} follow.
  
We start by
proving that \(\Acal\subset\tilde\Acal \cap \hat\Acal \). 
Fix \(u\in\Acal\). Due to \eqref{eq:onDu1}, we have
\((D^su)e_1=0\) and
\begin{align}\label{eq:gradofu}
\nabla u = R+(R'x + \psi')\otimes e_2=R(\Ibb+R^T(R'x + \psi')\otimes
e_2).
\end{align}
We first observe that the condition \(\det\nabla u = 1\) becomes \(1+R^T(R'x + \psi')\cdot e_2 =1\) or, equivalently, \((R'x
+ \psi')\cdot Re_2 =0\). This condition, together with the independence of $R$, $R',$ and $\psi'$
on $x_1$, yields 
\begin{align}\label{eq:bydetcond}
R'e_1\cdot Re_2=0 \qquad \text{and}\qquad (x_2R'e_2+\psi')\cdot
Re_2=0.
\end{align}
Let \(\theta\in BV(-1,1;[-\pi,\pi])\) be as in \eqref{eq:Rtrigon}.
Then, the first condition in \eqref{eq:bydetcond} 
gives \(\theta'  = 0\); consequently, also \(R'  =  0\).
Thus, the second equation in \eqref{eq:bydetcond} becomes \(\psi' \cdot Re_2=0\), which shows that \(u\in \hat \Acal\).
Moreover,  \(\psi'
\cdot Re_2=0\) is  equivalent  to \(R^T\psi' \cdot e_2=0\); hence,
 \(u\in \tilde \Acal\) with \(\gamma := Re_1\cdot\psi'\). Thus,
 \(\Acal\subset\tilde\Acal \cap \hat\Acal \).

Next, we observe that if \(u\in \hat\Acal\), then, using \eqref{eq:gradofu},
we have 
$$\det\nabla u = 1+R^T(R'x
+ \psi')\cdot e_2 =1+R^T\psi' \cdot e_2= 1+\psi' \cdot R e_2
=1.$$ 
Hence, \(u\in\Acal\), which shows that \(\hat\Acal\subset\Acal\).

Finally, we prove that  \(\tilde\Acal\subset\hat\Acal\). Let \(u\in\tilde\Acal\).
Then, $(Du)e_1=(\nabla u) e_1  \Lcal^2\lfloor\Omega + (D^su)e_1
=Re_1  \Lcal^2\lfloor\Omega$. By this identity and 
the Du  Bois-Reymond
lemma  (see   \cite{idczak}, for instance), we can find  $\phi\in BV(-1,1;\mathbb{R}^2)$  such that
$$u(x)=R(x_2)x_1e_1+\phi(x_2).$$

In particular, \(\nabla u(x) = R(x_2)e_1\otimes e_1 +\ (R'(x_2)x_1 e_1
+ \phi'(x_2))\otimes e_2\). Consequently, using the expression
for \(\nabla u\) given by the definition of \(\tilde \Acal\),
together with the independence
of $R$, $R'$, \(\gamma\), and $\phi'$
on $x_1$,
we conclude that
\begin{equation*}
\begin{aligned}
R' = 0 \enspace \text{ and } \enspace \phi'= R
e_2 +  \gamma R e_1.
\end{aligned}
\end{equation*}

Finally, set \(\psi(x_2):= \phi(x_2) -R(x_2)x_2e_2  \) for \(x_2\in
(-1,1)\). Then,  we have \(\psi\in BV(-1,1;\RR^2) \), which  satisfies \(\psi'\cdot Re_2= \gamma R e_1  \cdot Re_2=0\), because \(R \in SO(2)\) in $(-1,1)$, and also  \(u(x) =  R(x_2)x +\psi(x_2)\).   Thus,
\(u\in \hat \Acal\), which  implies   \(\tilde\Acal\subset\hat\Acal\). 
\end{proof}

 The following lemma on weak continuity of Jacobian determinants for gradients in $W^{1,1}(\Omega;\R^2)$ with suitable additional properties will be instrumental in the proof of the
inclusion \(\Acal_0 \subset \Acal\).

\begin{lemma}[Weak continuity properties of Jacobian determinants]\label{lem:det}
 Let $\Omega\subset \R^2$ be a bounded Lipschitz domain, and let 
$(u_\eps)_{\eps} \subset W^{1,1}(\Omega;\R^2)$ be a uniformly bounded sequence
satisfying $\det \nabla u_\eps=1$ a.e.~in $\Omega$ for all $\eps$ and 
\begin{align}\label{Linftybound}
\norm{\partial_1 u_\eps}_{L^\infty(\Omega;\R^{2})}\leq C,
\end{align}
where \(C\) is a positive constant independent of $\eps$.
If $u_\eps\to u$ in $L^1(\Omega;\R^2)$ for some $u\in BV(\Omega;\R^2)$, then
$\det \nabla u=1$ a.e.\!~in $\Omega$.
\end{lemma}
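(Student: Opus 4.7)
My plan is to pass to the limit in the distributional form of the identity $\det\nabla u_\eps=1$. First, by the uniform $L^\infty$-bound \eqref{Linftybound} and Banach--Alaoglu, I extract a (non-relabelled) subsequence so that $\partial_1 u_\eps\weaklystar V$ in $L^\infty(\Omega;\R^2)$. Since $u_\eps\to u$ in $L^1(\Omega;\R^2)$, uniqueness of distributional derivatives forces $D_1 u=V\Lcal^2\lfloor\Omega$, whence $V=(\nabla u)e_1$ almost everywhere in $\Omega$ and $(D^s u)e_1=0$. Together with Alberti's rank-one theorem (cf.~\eqref{eq:polarDc}), this last identity implies that $n_u(x)=\pm e_2$ for $|D^s u|$-a.e.~$x\in\Omega$, so the singular part of $Du$ has ``vertical normal''.

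Next, I use the distributional identity $\det\nabla u_\eps=\partial_2\bigl(u_\eps^{(2)}\partial_1 u_\eps^{(1)}\bigr)-\partial_1\bigl(u_\eps^{(2)}\partial_2 u_\eps^{(1)}\bigr)$, which is legitimate because $u_\eps^{(2)}\partial_1 u_\eps^{(1)}\in L^2(\Omega)\subset L^1_{\loc}(\Omega)$ by the embedding $BV(\Omega)\hookrightarrow L^2(\Omega)$ in dimension two and the $L^\infty$-bound on $\partial_1 u_\eps$. Testing against $\phi\in C_c^\infty(\Omega)$ and using $\det\nabla u_\eps=1$ yields
\[\int_\Omega\phi\dd{x}=\int_\Omega\partial_1\phi\,u_\eps^{(2)}\partial_2 u_\eps^{(1)}\dd{x}-\int_\Omega\partial_2\phi\,u_\eps^{(2)}\partial_1 u_\eps^{(1)}\dd{x}.\]
The second integral couples the strong $L^1$-limit $u_\eps^{(2)}\to u^{(2)}$ with the weak-$*$ $L^\infty$-limit $\partial_1 u_\eps^{(1)}\weaklystar V^{(1)}$, and therefore converges to $-\int_\Omega\partial_2\phi\,u^{(2)}V^{(1)}\dd{x}$.

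The main obstacle is identifying the limit of the remaining integral, which pairs a strongly $L^1$-convergent factor with one converging only weakly-$*$ as a Radon measure. To handle it, I would exploit the uniform Lipschitz regularity of $u_\eps$ in the $x_1$-direction coming from $\|\partial_1 u_\eps\|_{L^\infty}\le C$: viewing $x_1\mapsto u_\eps(x_1,\cdot)$ as a uniformly equicontinuous family taking values in $L^1$ of the corresponding vertical slice, an Arzel\`a--Ascoli argument upgrades the convergence, along a further subsequence, to uniform convergence in the $x_1$-variable. Combined with $\partial_2 u_\eps^{(1)}\Lcal^2\weaklystar D_2 u^{(1)}$ as Radon measures, a slicing/disintegration argument then identifies
\[\int_\Omega \partial_1\phi\,u_\eps^{(2)}\partial_2 u_\eps^{(1)}\dd{x}\longrightarrow \int_\Omega \partial_1\phi\,u^{(2)}\dd D_2 u^{(1)},\]
where $u^{(2)}$ is interpreted via its precise representative.

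Finally, decomposing $D_2 u^{(1)}=\partial_2 u^{(1)}\Lcal^2+D_2^s u^{(1)}$ and performing the analogous integration by parts for $u$, one checks that the singular contributions on both sides of the identity cancel precisely, leaving $\int_\Omega\phi\dd{x}=\int_\Omega\phi\det\nabla u\dd{x}$ for every $\phi\in C_c^\infty(\Omega)$. Hence $\det\nabla u=1$ almost everywhere in $\Omega$. The decisive step is the third one: the asymmetric a priori estimates (strong $L^1$-convergence, an $L^\infty$-bound in one direction only, an $L^1$-bound on the derivative in the complementary direction) must be combined with Alberti's rank-one theorem to pass to the limit in a product of mixed-type convergences, and the cancellation of singular contributions in the final step is precisely what the constraint $(D^su)e_1=0$ delivers.
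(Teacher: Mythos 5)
There is a genuine gap, and it appears at the very first step. The distributional null-Lagrangian identity
\[
\det\nabla u_\eps=\partial_2\bigl(u_\eps^{(2)}\partial_1 u_\eps^{(1)}\bigr)-\partial_1\bigl(u_\eps^{(2)}\partial_2 u_\eps^{(1)}\bigr)
\]
requires \emph{both} products $u_\eps^{(2)}\partial_1 u_\eps^{(1)}$ and $u_\eps^{(2)}\partial_2 u_\eps^{(1)}$ to lie in $L^1_{\loc}(\Omega)$. You justify this only for the first one (via $u_\eps^{(2)}\in L^2$ and $\partial_1 u_\eps^{(1)}\in L^\infty$) and are silent about the second, which is exactly where the trouble is: under the stated hypotheses you have only $u_\eps^{(2)}\in L^2(\Omega)$ and $\partial_2 u_\eps^{(1)}\in L^1(\Omega)$, and by H\"older the product is merely in $L^{2/3}(\Omega)$, not $L^1_{\loc}$. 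Neither the $L^\infty$-bound \eqref{Linftybound} nor the constraint $\det\nabla u_\eps=1$ controls $\partial_2 u_\eps$ in any norm beyond $L^1$, so the distributional identity you want to test against $\phi$ is not even well-posed at the level of $u_\eps$; this is precisely the obstacle that makes weak continuity of Jacobians in $W^{1,1}$ non-trivial. The subsequent passage to the limit in the first integral is also left at the level of a sketch: the Arzel\`a--Ascoli argument needs a compactness criterion for the slice maps $x_1\mapsto u_\eps(x_1,\cdot)$ (equicontinuity in $x_1$ alone is not enough; one also needs pointwise relative compactness of the slices in $L^1$), the pairing $\int\partial_1\phi\,u^{(2)}\dd D_2 u^{(1)}$ of the precise representative with a $BV$-measure needs care on the Cantor part, and the claimed ``exact cancellation of singular contributions'' in the final integration by parts is asserted but not verified.

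For comparison, the paper's proof avoids the distributional determinant entirely. It adapts the scheme of [FLM05, Theorem~2]: one mollifies $u_\eps$ into $u_{\eps,j}:=\varphi_j*u_\eps$ and must show $\det\nabla u_{\eps,j}\to\det\nabla u_\eps$ in $L^1_{\loc}$ as $j\to\infty$ for each fixed $\eps$. In [FLM05] this is done via an $L^2$-bound on cofactors and the estimate $|\det A|\le|\operatorname{adj}A|^2$, which is unavailable here; the crucial replacement in the paper is that the $L^\infty$-bound on $\partial_1 u_\eps$ survives mollification, yielding the linear estimate
\[
|\det\nabla u_{\eps,j}|=|(\partial_1 u_{\eps,j})^\perp\cdot\partial_2 u_{\eps,j}|\le C\,|\nabla u_{\eps,j}|,
\]
after which $\nabla u_{\eps,j}\to\nabla u_\eps$ in $L^1_{\loc}$ and the Vitali--Lebesgue lemma close the argument. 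If you want to rescue your approach, you would need to perform this mollification step first so that the smooth functions $u_{\eps,j}$ genuinely satisfy the null-Lagrangian identity, and only then pass to the double limit; at that point you are essentially reproducing [FLM05] and the observation above is the key input.
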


\begin{proof}
The claim in Lemma~\ref{lem:det} would be an immediate consequence of 
\cite[Theorem~2]{FLM05} if in place of \eqref{Linftybound}, we required 
\begin{equation}\label{(1)inFLM}
\begin{aligned}
(\operatorname{adj} \nabla u_\eps)_\eps \subset L^2(\Omega;\RR^{2\times
2}), 
\end{aligned}
\end{equation}
which, because of the structure of the adjoint matrix in this two-dimensional setting, 
is equivalent to \(\nabla u_\eps\in L^2(\Omega;\RR^{2\times
2})\) for all \(\eps\).
 Even though we are not assuming this here, it is still possible to validate the arguments of \cite[ Proof of Theorem~2]{FLM05}  in  our context,  as we detail next.

Since \(|\operatorname{adj} \nabla u_\eps| = | \nabla u_\eps|\),
it can be checked that in order to mimic the proof  of 
\cite[Theorem~2]{FLM05} with \(N=2\), we are only left to prove the following: If \((\varphi_j)_{j\in\N}\) is a sequence of standard mollifiers and \(\Omega'\) is an arbitrary open set compactly contained in \(\Omega\), then
\((\det \nabla u_{\eps,j})_{j\in\N} \) converges to \(\det \nabla u_\eps\)  in \(L^1(\Omega')\)  as $j\to \infty$ for all \(\eps\), where \( u_{\eps,j} := \varphi_j
* u_\eps\). 

 In Step~4 of the proof of 
\cite[Theorem~2]{FLM05}, this convergence is a consequence of the Vitali--Lebesgue lemma
using  \eqref{(1)inFLM},  the bound \(|\det A| \leq |\operatorname{adj} A|^2\) for all \(A\in \R^{2\times2}\) (see 
\cite[(7)]{FLM05}), and well-known properties of mollifiers. 

Here, similar arguments  can be invoked, but instead of the estimate \(|\det A| \leq |\operatorname{adj}
A|^2\)  for $A\in \R^{2\times 2}$,   we use the fact that  \eqref{Linftybound}  
yields
\begin{equation*}
\begin{aligned}
|\det\nabla u_{\eps,j}|= | (\partial_1 u_{\eps,j})^\perp \cdot \partial_2 u_{\eps,j} | \leq C |\partial_2 u_{\eps,j}| \leq C|\nabla u_{\eps, j}|
\end{aligned}
\end{equation*}
 a.e.~in $\Omega$. 
Hence, since \( u_{\eps,j} \to u_\eps\) in \(W^{1,1}(\Omega';\R^{2})\) and pointwise
a.e.\!~in \(\Omega\)  as $j\to \infty$,  we conclude that \((\det\nabla u_{\eps,j})_{j\in\N} \) converges to \(\det \nabla u_\eps\)  in \(L^1(\Omega')\)   as $j\to \infty$ 
for all \(\eps \) by the Vitali--Lebesgue
lemma.
\end{proof}

We obtain from the following proposition that weak$^\ast$ limits of sequences
in $\Acal_\eps$ belong to $\Acal$.

\begin{proposition}[Asymptotic behavior of sequences 
in $\Acal_{\eps}$]\label{con:compactness}
 Let  $\Omega=(0,1)\times (-1,1)$. Then, 
\begin{align}\label{A0subsetA} \Acal_0 \subset \Acal,
\end{align} 
where \(\Acal_0\) and \(\Acal\) are the sets introduced in \eqref{eq:def-A0} and \eqref{eq:def-A}, respectively. 
\end{proposition}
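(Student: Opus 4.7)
The plan is to combine the asymptotic rigidity result of Theorem~\ref{thm:aymrigintro} with the Jacobian continuity result of Lemma~\ref{lem:det}. The first ingredient pins down the affine-in-$x_1$ structure of the limit, and the second upgrades that structure to the determinant constraint present in $\Acal$ (cf.~the alternative description~\eqref{eq:def-Aalt}).

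Let $u\in \Acal_0$ and let $(u_\epsi)_\epsi\subset W^{1,1}(\Omega;\R^2)$ be a sequence with $u_\epsi\in \Acal_\epsi$ for every $\epsi$ and $u_\epsi\weaklystar u$ in $BV(\Omega;\R^2)$. Since $\Acal_\epsi\subset \Bcal_\epsi$ by construction (compare \eqref{eq:def-Aep} with \eqref{eq:def-B-ep}), the sequence in particular satisfies the hypothesis $\nabla u_\epsi\in SO(2)$ a.e.\! in $\epsi\Yrig\cap\Omega$. Theorem~\ref{thm:aymrigintro} (or, equivalently, Proposition~\ref{prop:asymptotic_rigidity2}) then yields directly the representation $u(x)=R(x_2)x+\psi(x_2)$ for a.e.\! $x\in\Omega$, with $R\in BV(-1,1;SO(2))$ and $\psi\in BV(-1,1;\R^2)$. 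In view of \eqref{eq:def-Aalt}, the proof reduces to checking that $\det\nabla u=1$ a.e.\! in $\Omega$.

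To obtain this identity I would invoke Lemma~\ref{lem:det} on the sequence $(u_\epsi)_\epsi$. Its hypotheses are verified as follows. Uniform boundedness in $W^{1,1}(\Omega;\R^2)$ is automatic from weak$^*$ convergence in $BV$; the identity $\det\nabla u_\epsi=1$ a.e.\! is encoded in the inclusion $\nabla u_\epsi\in \Mcal_{e_1}$ of \eqref{eq:def-Aepalt} together with \eqref{Me1}; and the strong $L^1$-convergence $u_\epsi\to u$ is part of weak$^*$ convergence in $BV$. The decisive point is the uniform $L^\infty$-bound on $\partial_1 u_\epsi$: writing $\nabla u_\epsi = R_\epsi(\Ibb+\gamma_\epsi e_1\otimes e_2)$ with $R_\epsi\in L^\infty(\Omega;SO(2))$ and noting that $(e_1\otimes e_2)e_1=0$, one gets
\[
\partial_1 u_\epsi = (\nabla u_\epsi)e_1 = R_\epsi e_1,
\]
so that $|\partial_1 u_\epsi|=1$ a.e.\! in $\Omega$, uniformly in $\epsi$. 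Lemma~\ref{lem:det} then gives $\det\nabla u=1$ a.e.\! in $\Omega$, and hence $u\in\Acal$, proving \eqref{A0subsetA}.

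I do not expect any substantive obstacle in this argument, because the two genuinely nontrivial steps have already been carried out in Theorem~\ref{thm:aymrigintro} and in Lemma~\ref{lem:det}. What makes both results applicable here is the specific choice $s=e_1$ of slip direction aligned with the layers: this is exactly what yields the pointwise identity $|\nabla u_\epsi e_1|=1$ and bypasses the standard $L^2$-type bound on $\nabla u_\epsi$ that a naive application of weak continuity of determinants would require. Were $s$ different from $\pm e_1$, this step would fail in this simple form and one would need an extra relaxation/laminate argument of the type alluded to in the introduction following~\cite{CoT05,ChK17}.
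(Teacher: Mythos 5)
Your argument is correct and coincides with the paper's proof: both pass through the inclusion $\Acal_\epsi\subset\Bcal_\epsi$ to invoke Proposition~\ref{prop:asymptotic_rigidity2} for the rigid representation, reduce via~\eqref{eq:def-Aalt} to the determinant constraint, and then apply Lemma~\ref{lem:det}, with the decisive observation $\partial_1 u_\epsi=R_\epsi e_1$, hence $|\partial_1 u_\epsi|\equiv 1$, supplying the $L^\infty$-bound. Your closing remark about $s=e_1$ being essential for this last step also matches the paper's emphasis.
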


\begin{proof}
The statement follows from the inclusion \(\Acal_\epsi \subset \Bcal_\epsi\)
(see \eqref{eq:def-Aepalt}) and  the identity \eqref{eq:def-Aalt} in conjunction with 
Proposition~\ref{prop:asymptotic_rigidity2} and Lemma~\ref{lem:det},
observing that the condition $\nabla u_\eps\in \Mcal_{e_1}$ a.e.~in $\Omega$ guarantees $|\partial_1 u_\eps|=|\nabla u_\eps e_1|=1$ a.e.~in $\Omega$, and hence $\norm{\partial_1
u_\eps}_{L^\infty(\Omega;\R^2)}=1$ for any $\eps$.
\end{proof}

The question whether  the set $\Acal$ can be further identified as limiting
set for sequences in $\Acal_{\eps}$, namely, whether the equality \(\Acal_0
= \Acal\)  
  is true,  cannot be answered at this point. However, as stated in Theorem~\ref{thm:SBVinfty}, the inclusions \(\Acal_0\supset \Acal\cap  SBV_\infty(\Omega;\R^2)\)  and  \(\Acal_0\supset
\Acal^\parallel\) hold. Before proving these inclusions, we discuss  a further characterization of some 
special subsets of $\Acal$.
\begin{remark}[Structure of subsets of $\Acal$]\label{rk:subsets-A}
 Similarly to~\eqref{BcapW11}  and \eqref{BcapSBV}, using fine properties of one-dimensional \(BV\) functions, the sets \(\Acal \cap W^{1,1}(\Omega;\R^2)\), \(\Acal \cap SBV(\Omega;\R^2)\), and \(\Acal\cap SBV_\infty(\Omega;\R^2)\) can be characterized as follows.
\vskip1mm

{\parindent=0mm
(a)}  In view of  \eqref{eq:onDu1} and \eqref{charA},  one observes that 
\begin{align*}
\Acal \cap W^{1,1}(\Omega;\R^2) 
&= \{u\in W^{1,1}(\Omega;\R^2) \!:\, 
u(x)= Rx+\theta(x_2)Re_1+c 
\text{ for a.e.\! $x\in\Omega$,}\\
&\hspace{3.4cm} \text{with $R\in SO(2)$, $\theta\in W^{1,1}(-1,1)$, $c\in \mathbb{R}^2$}\}\\ 
&= \{u\in W^{1,1}(\Omega;\R^2)\!:\,  \nabla u(x)= R(\Ibb+ \gamma(x_2) e_1\otimes e_2) \text{ for
a.e.\! $x\in\Omega$, }\\
&\hspace{3.4cm}\text{with $R\in SO(2)$, $\gamma\in L^1(-1,1)$}\}.
\end{align*} 

Additionally, as a consequence of the construction of the recovery sequence in the $\Gamma$-convergence homogenisation result \cite[Theorem~1.1]{ChK17}, we also  know that 
\begin{align*}
\Acal\cap W^{1,1}(\Omega;\R^2) = \{u\in W^{1,1}(\Omega;\R^2)\!:\, &\text{ there exists }(u_{\eps})_{\eps}\subset W^{1,1}(\Omega;\R^2)\text{ with $u_\eps\in \Acal_\eps$ for all $\eps$ }\\
 &\text{ such that } u_\eps\weakly u \text{ in $W^{1,1}(\Omega;\R^2)$}\}. 
\end{align*}

 \vskip1mm

{\parindent=0mm
(b)} Using   \eqref{eq:onDu1} and \eqref{charA} once more, we have 
\begin{align*}
\Acal \cap SBV(\Omega;\R^2) 
= \{u\in SBV(\Omega;\R^2)\!:\, &\, u(x)= R(x_2)x+\psi(x_2) \text{ for a.e.\! $x\in\Omega$,} \\ & \text{with }\,R\in SBV(-1,1;SO(2)) \text{ and } \psi\in SBV(-1,1;\mathbb{R}^2)\\
&\,\text{such that $R'=0$ and $\psi' \cdot Re_2=0$ \!  a.e. in  \((-1,1)\)}\}.
\end{align*}
Note that both $J_R$
and $J_{\psi}$ are given by an at most countable union of points in $(-1, 1)$,  which implies that $J_u$ consists of at most countably many  segments parallel to $e_1$. It is not possible to conclude that the  functions $R$  
are  piecewise constant according to \cite[Definition 4.21]{AFP00}, as we have, a priori, no control  on $\mathcal{H}^0(J_R)$  
(cf.~\cite[Example 4.24]{AFP00}).

\vskip2mm
{\parindent=0mm%
(c)}   With  (b) and  \cite[Theorem 4.23]{AFP00}, and recalling
\eqref{eq:def-PC},  it follows that 
 \begin{align*}
\Acal\cap SBV_\infty(\Omega;\R^2) & = \{u\in SBV_{\infty}(\Omega;\R^2)\!:\, u(x)= R(x_2)x+\psi(x_2) \text{ for a.e.\! $x\in\Omega$,}\\
&\hskip35mm\text{ with  $R \in PC(-1,1;SO(2))$  and  $\psi\in SBV_{\infty}(-1,1;\mathbb{R}^2)$}\\
&\hskip35mm\text{ such that
 $\psi' \cdot Re_2=0$ a.e.\! in \((-1,1)\)} \}.
\end{align*}
Here, both $J_R$
and $J_{\psi}$ are finite sets of points in $(-1, 1)$, and  $J_u$ is given by a finite union of segments
parallel to $e_1$.
 Alternatively, one can express $\Acal\cap SBV_\infty(\Omega;\R^2)$ with the help of a Caccioppoli partition of $\Omega$ into finitely many horizontal strips; precisely, 
 \begin{align*}
\Acal\cap SBV_\infty(\Omega;\R^2) &  =\{ u\in SBV_\infty(\Omega;\R^2)\!:\, \nabla u|_{E_i}=R_i(\Ibb + \gamma_i e_1\otimes e_2), \text{ with $\{E_i\}_{i=1}^{n}$ a partition of $\Omega$ }\\&\hskip37mm\text{such that $E_i=(\R\times I_i)\cap \Omega$ with \(I_i\subset (-1,1)\) for $i=1, \ldots, n$, }\\
&\hskip37mm\text{$R_i\in SO(2)$ and $\gamma_i\in L^1(E_i)$ with $\partial_1\gamma_i=0$ for $i=1, \ldots, n$}  \}. 
\end{align*}

\end{remark}

In the following lemma, we construct an admissible   piecewise affine  approximation for  basic limit deformations in $ \Acal\cap  SBV_\infty(\Omega;\allowbreak\R^2)$  with a non-trivial jump along the horizontal line at $x_2=0$. Based on this construction, we  will then  establish the inclusion  \(\Acal_0\supset \Acal\cap  SBV_\infty(\Omega;\R^2)\)  in  Proposition~\ref{prop:SBVinfty} below.

\begin{lemma}[Approximation of maps in \boldmath{$\Acal\cap SBV_\infty$} with  a single  jump]\label{lem:existence_admissible1}
Let $\Omega=(0,1)\times (-1,1)$, and let $u\in \Acal\cap SBV_\infty(\Omega;\R^2)$ be such that $u(x)=R(x_2)x+\psi(x_2)$ for a.e.\! $x\in \Omega$, where 
\begin{equation*}
\begin{aligned}
R(t):=\begin{cases}
R^+ & \text{if } t\in[0,1)\\
R^- & \text{if }  t\in (-1,0)
\end{cases} 
\quad \text{and} \quad 
\psi(t):=\begin{cases}
\psi^+ & \text{if } t\in [0,1)\\
\psi^- & \text{if }  t\in(-1, 0)
\end{cases} \qquad \text{for } t\in (-1,1),
\end{aligned}
\end{equation*}
with  some \(R^\pm\in SO(2)\) and \(\psi^\pm \in \RR^2\).
Then, there exists a sequence $(u_\eps)_\eps\subset W^{1,1}(\Omega;\RR^2)  $  with \(\int_\Omega u_\epsi\dd{x} = \int_\Omega u\dd{x}\) and $u_\epsi\in \Acal_\eps$ for all \(\epsi,\) and such that  $u_\eps\weaklystar
u$ in $BV(\Omega;\R^2)$.
\end{lemma}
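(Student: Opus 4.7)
The plan is to concentrate the single horizontal jump of $u$ at $\{x_2=0\}$ inside one soft layer of vertical width $\lambda\epsi$ close to that line, and to set $u_\epsi$ equal to the corresponding rigid motion everywhere else. Concretely, for each $\epsi$ pick $k_\epsi\in\Z$ with $k_\epsi\epsi\le 0<(k_\epsi+1)\epsi$ and designate as the \emph{transition layer} the set
\[
S_\epsi := (0,1)\times\bigl(k_\epsi\epsi,(k_\epsi+\lambda)\epsi\bigr)\subset \epsi Y_{\mathrm{soft}}\cap\Omega.
\]
Provisionally, set $u_\epsi(x)=R^- x+\psi^-$ for $x_2\le k_\epsi\epsi$ and $u_\epsi(x)=R^+ x+c_\epsi^+$ for $x_2\ge (k_\epsi+\lambda)\epsi$, with $c_\epsi^+$ to be fixed by matching; both blocks already satisfy $\nabla u_\epsi\in SO(2)$ on every rigid stripe they intersect (no rigid stripe meets $S_\epsi$).

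The algebraic core is to fill $S_\epsi$ by a continuous, piecewise affine map with gradient in $\Mcal_{e_1}$, whose bottom and top traces match those of the external rigid motions. Since the distinct rotations $R^\pm$ are never rank-one compatible, at least two affine pieces are required. Invoking the classification of rank-one connections in $\Mcal_{e_1}$ from \cite[Lemma~3.1]{ChK17}, I select shears $\gamma^\pm\in\R$ (depending only on the fixed $R^\pm$) and a non-horizontal unit normal $\nu\in \mathbb{S}^{1}$ such that
\[
F^-:=R^-(\Ibb+\gamma^- e_1\otimes e_2),\qquad F^+:=R^+(\Ibb+\gamma^+ e_1\otimes e_2)
\]
lie in $\Mcal_{e_1}$ and satisfy $F^- -F^+=a\otimes\nu$ for some $a\in\R^2$. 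I then subdivide $S_\epsi$ by a straight segment $\ell_\epsi$ of normal $\nu$ that stretches from $\{x_1=0\}$ to $\{x_1=1\}$---following the strategy recalled in Remark~\ref{rmk:alternativeconst} to avoid triple junctions---and define $u_\epsi(x)=F^- x+b_\epsi^-$ on the subregion $S_\epsi^-$ adjacent to the bottom of $S_\epsi$ and $u_\epsi(x)=F^+ x+b_\epsi^+$ on the subregion $S_\epsi^+$ adjacent to the top. The translations $b_\epsi^\pm$ and the constant $c_\epsi^+$ are uniquely fixed by continuity along $\ell_\epsi$ and across the two horizontal boundaries of $S_\epsi$, the latter two matchings being automatic (up to an $O(\epsi)$ vertical shift) since $F^\pm-R^\pm=(\gamma^\pm R^\pm e_1)\otimes e_2$ is rank-one with normal $e_2$. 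A final global additive constant restores $\int_\Omega u_\epsi\dd x=\int_\Omega u\dd x$.

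Admissibility $u_\epsi\in\Acal_\epsi$ follows immediately: $\nabla u_\epsi\in SO(2)$ outside $S_\epsi$ (hence on all rigid stripes) and $\nabla u_\epsi\in\Mcal_{e_1}$ inside $S_\epsi$ by construction, with $\gamma=0$ on $\epsi Y_{\mathrm{rig}}\cap\Omega$. For the convergence, $u_\epsi$ coincides with $R^-x+\psi^-$ below $S_\epsi$ and differs from $R^+ x+\psi^+$ above $S_\epsi$ only by a constant of order $\epsi$, while on $S_\epsi$ the function is uniformly bounded; since $|S_\epsi|=O(\epsi)$, this gives $u_\epsi\to u$ in $L^1(\Omega;\R^2)$. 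The bound $\int_\Omega|\nabla u_\epsi|\dd x\le C+O(\epsi)$ with a constant $C$ depending only on $|R^\pm|$ and $|\Omega|$ then yields $u_\epsi\weaklystar u$ in $BV(\Omega;\R^2)$.

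The main obstacle is the algebraic step of producing the intermediate matrices $F^\pm$ together with a workable non-horizontal normal $\nu$: one must simultaneously meet (a) $F^\pm\in\Mcal_{e_1}$, (b) $F^\pm$ rank-one compatible with $R^\pm$ across horizontal interfaces, and (c) $F^-$ rank-one compatible with $F^+$ across $\nu$. This is precisely the content of \cite[Lemma~3.1]{ChK17}; without that explicit input, no purely scaling or density argument can bridge two distinct rotations inside a single soft layer while preserving the differential inclusion into $\Mcal_{e_1}$.
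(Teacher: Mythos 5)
There is a genuine gap, and it occurs precisely at the point the proposal itself flags as the key algebraic step.

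First, the geometry does not close. You claim shears $\gamma^\pm$ ``depending only on the fixed $R^\pm$'' and a fixed non-horizontal normal $\nu$, and then ask the interface $\ell_\eps$ (perpendicular to $\nu$) to stretch from $\{x_1=0\}$ to $\{x_1=1\}$ inside $S_\eps$. But a segment of fixed, nonzero slope joining the two lateral sides of the unit square has vertical extent bounded away from $0$; it cannot sit inside $S_\eps$, whose vertical extent is $\lambda\eps\to0$. To fit the interface inside the soft stripe, its normal must approach $e_2$, and the classification of rank-one connections in $\Mcal_{e_1}$ then forces the shears on either side to blow up like $1/\eps$ (this is exactly why the paper's $\mu^\pm_\eps,\tilde\mu^\pm_\eps,\gamma^\pm_\eps$ all carry a $\tfrac{4}{\eps\lambda}$ term). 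A related symptom: with bounded $\nabla u_\eps$ on a region of height $O(\eps)$, the total variation contributed by $S_\eps$ is $O(\eps)$, so $Du_\eps$ cannot develop the $O(1)$ jump mass of $Du$ on $\{x_2=0\}$, and in fact one checks that with your matching conditions $c^+_\eps\not\to\psi^+$, so $u_\eps\not\to u$ in $L^1$ in general.

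Second, even after scaling the shears correctly, two affine pieces are not enough. Across the single interface $\ell_\eps$, the difference $F^--F^+$ has first column $(R^--R^+)e_1$, so the rank-one vector $a$ is necessarily parallel to $(R^--R^+)e_1$; the translation jump produced by this construction must therefore be parallel to $(R^--R^+)e_1$. The given data $\psi^+-\psi^-\in\R^2$ is arbitrary and will generically not lie on that line. This is exactly why the paper introduces the auxiliary rotation $S$ (with $Se_1$, $R^+e_1$ linearly independent) and the decomposition $\psi^+-\psi^-=\alpha R^+e_1+\beta Se_1$, and then uses two \emph{horizontal} strips with shears $\gamma^\pm_\eps\sim\alpha/\eps,\,\beta/\eps$ to supply the translation, alongside the wedge pieces (with shears $\mu^\pm_\eps,\tilde\mu^\pm_\eps\sim\pm\tfrac{4}{\eps\lambda}$) that supply the rotation mismatch $(R^+-R^-)e_1 x_1$. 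You need, at a minimum, pieces dedicated to the translation in two independent directions plus pieces dedicated to the rotation change; a single diagonal interface collapses these roles and cannot satisfy all the matching conditions. (The paper's Remark~\ref{rmk:alternativeconst}(i) does simplify to four pieces when $R^+\ne\pm R^-$, but it still keeps separate horizontal strips for the translation, and all its shears are $\sim 1/\eps$.)

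In short: the core difficulty you correctly identified (producing the intermediate matrices and the interface geometry) is not resolved by your choices. You would need (a) shears scaling like $1/\eps$, (b) interfaces with $O(\eps)$ slope, and (c) at least one additional shear block per independent direction of the translation jump, which is what the paper's six-piece construction provides.
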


\begin{proof}
We start by observing that for $u$ as in the statement of the lemma, there holds
\begin{equation}\label{eq:expDu}
\begin{aligned}
Du = R \Lcal^2\lfloor \Omega + [(R^+-R^-)e_1x_1
+(\psi^+-\psi^-)]\otimes e_2  \Hcal^1\lfloor
\bigl( (0,1) \times \{0\}\bigr).
\end{aligned}
\end{equation} 
Let \(S\in SO(2)\) be such that \textit{(i)} \(S\not= R^\pm\); \textit{(ii)} \(Se_1 \) and \(R^+e_1\) are linearly independent; \textit{(iii)} ~\(\theta^\pm\in (-\pi, \pi)\setminus\{0\}\) is
 the  rotation angle of \(S^T R^\pm\),  cf.~\eqref{eq:Rtrigon}.   Due to  \textit{(ii)}, there exist \(\alpha\), \(\beta\in \RR\) such that
\begin{equation}
\begin{aligned}\label{eq:p+-p-}
\psi^+ - \psi^- = \alpha R^+e_1 + \beta Se_1.
\end{aligned}
\end{equation}

For each \(\eps>0\), set
\begin{equation}\label{eq::defhl+l-epsi}
\begin{aligned}
\gamma^+_\epsi:= \frac{4\alpha}{\epsi\lambda},\enspace
\gamma^-_\epsi:= \frac{4\beta}{\epsi\lambda},\enspace
\mu^\pm_\epsi:=
\pm\frac{4}{\epsi\lambda} + \tan\Big( \frac{\theta^\pm}{2}\Big),
 \enspace \tilde \mu^\pm_\epsi:=
\pm\frac{4}{\epsi\lambda} - \tan\Big( \frac{\theta^\pm}{2}\Big),
\end{aligned}
\end{equation}
and let  \(V_\epsi \in L^1(\Omega;\RR^{2\times 2})\) 
be the function defined by
\begin{equation}
\label{eq::grad-u-eps}
\begin{aligned}
 V_\epsi(x)  = \begin{cases}
R^+ &\text{if } x\in  (0,1) \times (\epsi \lambda, 1),\\
R^+(\Ibb +\gamma^+_\epsi e_1 \otimes e_2) &\text{if }
x\in  (0,1) \times (\frac{3\epsi\lambda}4, \epsi\lambda ),\\
R^+(\Ibb +  \mu^+_\epsi e_1 \otimes e_2) & \text{if
} x_1\in  (0,1) \text{ and } x_2\in (-\frac{\epsi\lambda}4
 x_1 +\frac{3\epsi\lambda}4 ,\frac{3\epsi\lambda}4 ),\\ 
S(\Ibb +  \tilde\mu^+_\epsi e_1 \otimes e_2) & \text{if
} x_1\in  (0,1) \text{ and } x_2\in (\frac{\epsi\lambda}2,-\frac{\epsi\lambda}4  x_1 +\frac{3\epsi\lambda}4 ),\\  
S(\Ibb +  \gamma^-_\epsi e_1 \otimes e_2) &\text{if
} x\in  (0,1) \times (\frac{\epsi\lambda}4 ,\frac{\epsi\lambda}2  ),\\
S(\Ibb +  \tilde\mu^-_\epsi e_1 \otimes e_2) &  \text{if
} x_1\in  (0,1) \text{ and } x_2\in (\frac{\epsi\lambda}4  x_1,\frac{\epsi\lambda}4), \\
R^-(\Ibb +  \mu^-_\epsi e_1 \otimes e_2) &  \text{if
} x\in  (0,1) \text{ and } x_2\in (0,\frac{\epsi\lambda}4  x_1),\\ 
R^- &\text{if } x\in  (0,1) \times (-1,0),
\end{cases}
\end{aligned}
\end{equation}
see Figure~\ref{figgen}. 
%
%
\begin{center}
\begin{figure}[h]
\begin{tikzpicture}[scale=2.19]
  
\draw[dashed, black] 
(0,0) -- (4,0) -- (4, -2) -- (0, -2)
 -- (0,0);

 \draw[Korange!20,fill=Korange!40] 
(0,0) -- (0,.35) -- (4,.35) -- (4,0)
 -- (0,0);
 
  \draw[Korange!20,fill=Korange!40] 
(0,-2) -- (0,-2.35) -- (4,-2.35) -- (4,-2)
 -- (0,-2);


\draw[dashed,black] 
(0,0.03) -- (0,.35) ;

\draw[dashed,black] 
(4,0.03) -- (4,.35) ;

\draw[dashed,black] 
(0,-2.03) -- (0,-2.35) ;

\draw[dashed,black] 
(4,-2.03) -- (4,-2.35) ;


\draw[dashed,black] 
(0,-.5) -- (4,-.5) ;

\draw[dashed,black] 
(0,-.5) -- (4,-1) ;

\draw[dashed,black] 
(0,-1) -- (4,-1) ;

\draw[dashed,black] 
(0,-1.5) -- (4,-1.5) ;

\draw[dashed,black] 
(0,-2) -- (4,-1.5) ;

\draw(-.2,-2) node     
        {\begin{normalsize} $\textcolor{black}{0}$\end{normalsize}
        };

\draw(-.2,-1.5) node     
        {\begin{normalsize} $\textcolor{black}{\frac{\varepsilon\lambda}{4}}$\end{normalsize}
        };
        
\draw(-.2,-1) node     
        {\begin{normalsize} $\textcolor{black}{\frac{\varepsilon\lambda}{2}}$\end{normalsize}
        };        

\draw(-.2,-.5) node     
        {\begin{normalsize} $\textcolor{black}{\frac{3\varepsilon\lambda}{4}}$\end{normalsize}
        };

\draw(-.2,0) node     
        {\begin{normalsize} $\textcolor{black}{\varepsilon\lambda}$\end{normalsize}
        };

\draw(-.03,-2.5) node     
        {\begin{normalsize} $\textcolor{black}{0}$\end{normalsize}
        };
        
\draw(3.95,-2.5) node     
        {\begin{normalsize} $\textcolor{black}{1}$\end{normalsize}
        };  
        

\draw(3.26,-1.8) node     
        {\begin{normalsize} $\textcolor{black}{R^-(\mathbb{I}
        + \mu^-_\epsi e_1 \otimes e_2)}$\end{normalsize}
        };  

\draw(.64,-1.7) node     
        {\begin{normalsize} $\textcolor{black}{S(\mathbb{I}
        + \tilde\mu_\epsi^- e_1 \otimes e_2)}$\end{normalsize}
        };  
        
\draw(2,-1.25) node     
        {\begin{normalsize} $\textcolor{black}{S(\mathbb{I}
        +  \gamma_\epsi^-  e_1 \otimes
e_2)}$\end{normalsize}
        };                
 
\draw(.64,-.8) node     
        {\begin{normalsize} $\textcolor{black}{S(\mathbb{I}
        + \tilde\mu_\epsi^+ e_1 \otimes
e_2)}$\end{normalsize}
        };                     

\draw(3.26,-.7) node     
        {\begin{normalsize} $\textcolor{black}{R^+(\mathbb{I}
        + \mu^+_\epsi e_1 \otimes e_2)}$\end{normalsize}
        };

\draw(2,-.25) node     
        {\begin{normalsize} $\textcolor{black}{R^+(\mathbb{I}
        + \gamma^+_\epsi e_1 \otimes
e_2)}$\end{normalsize}
        };       

 
\draw(2,.18) node     
        {\begin{normalsize} $\textcolor{black}{R^+}$\end{normalsize}
        };  
 
\draw(2,-2.17) node     
        {\begin{normalsize} $\textcolor{black}{R^-}$\end{normalsize}
        };   
%

\draw(4.2,-1) node     
        {\begin{normalsize} $\textcolor{black}{
        }$\end{normalsize}
        };
\end{tikzpicture}
\caption{Construction of  $V_{\eps}$.}
\label{figgen}
\end{figure}
\end{center}

By construction, each function $V_\eps$ takes values only in $\Mcal_{e_1}$, and its piecewise definition is chosen such that neighboring matrices in Figure~\ref{figgen} are rank-one-connected along their separating lines 
according to \cite[Lemma~3.1]{ChK17}. Hence, there exists  a Lipschitz function \(u_\epsi \in W^{1,\infty}(\Omega;\RR^2)\) 
such that \(\nabla u_\epsi = V_\eps \).   By adding a suitable
constant, we may assume that \(\int_\Omega u_\epsi\dd{x} = \int_\Omega u\dd{x}\).  In view of  the Poincar\'e--Wirtinger inequality  and~\eqref{eq::grad-u-eps},  $(u_\eps)_\eps$ is a uniformly
bounded sequence
in $W^{1,1}(\Omega;\R^2)$  satisfying  $u_\epsi\in \Acal_\eps$ for all \(\epsi\) (cf.~\eqref{eq:def-Aepalt}).

  To prove that  
  $u_\eps\weaklystar
u$ in $BV(\Omega;\R^2)$, it suffices to show that
\begin{equation}
\label{eq::rc_weakconv}
\begin{aligned}
Du_\eps\weaklystar Du \text{  in } \Mcal(\Omega;\R^{2\times 2}),
\end{aligned}
\end{equation}
or, equivalently, in view of \eqref{eq:expDu}, that for every \(\varphi \in C_0(\Omega;\RR^2)\),
\begin{align}
\lim_{\epsi \to 0} \int_\Omega \nabla u_\epsi(x)  \varphi(x)\dd
x & =  \int_\Omega R(x_2)\varphi(x)
\dd x+ \int^1_{ 0}  [(R^+-R^-)e_1x_1
+(\psi^+-\psi^-)]\otimes e_2 \varphi(x_1,0) \dd x_1.\label{eq::rc_weakconv1}
\end{align}

Clearly,  
\begin{align}
\lim_{\epsi\to0} \int_{(0,1) \times [(-1,0) \cup (\epsi \lambda,
1) 
]} \nabla u_\epsi(x) \varphi(x)\dd
x &= \lim_{\epsi\to0} \int_{(0,1) \times [(-1,0) \cup (\epsi \lambda,
1)]} R(x_2)\varphi(x)\dd
x \notag\\
&= \int_\Omega R(x_2)\varphi(x)
\dd x.\label{eq::rc_weakconv2}
\end{align}

Moreover, using  \eqref{eq::defhl+l-epsi}, a change of variables,
and  Lebegue's dominated  convergence theorem  together with the
continuity and boundedness
 of \(\varphi\),  we have 
\begin{align}
&\lim_{\epsi\to0} \int_{(0,1) \times (0,\frac{\epsi\lambda}4  x_1)} \nabla u_\epsi(x) \varphi(x)\dd x \notag\\
&\quad = \lim_{\epsi\to0} \int_0^1\!\! \int_0^{\frac{\epsi\lambda}4
 x_1} R^-\big(\mathbb{I}
        + \tan\big( \tfrac{\theta^-}{2}\big)e_1 \otimes e_2- \tfrac{4}{\epsi\lambda}e_1 \otimes e_2\big) \varphi(x)\dd x_2\!\dd x_1
 \notag\\
&\quad = \lim_{\epsi\to0} \int_0^1\!\! \int_0^{
 x_1} R^-\big(\tfrac{\epsi\lambda}4\mathbb{I}
        + \tfrac{\epsi\lambda}4\tan\big( \tfrac{\theta^-}{2}\big)e_1 \otimes e_2 -e_1
\otimes e_2 \big)
\varphi(x_1,\tfrac{\epsi\lambda}4z)\dd z\!\dd x_1\notag\\
&\quad = - \int_0^1\!\! \int_0^{
 x_1}R^-e_1
\otimes e_2
\varphi(x_1,0)\dd z\!\dd x_1 = - \int_0^1  x_1 R^-e_1 
\otimes e_2
\varphi(x_1,0)\dd x_1.
\label{eq:limve1}
\end{align}
Similarly, 
\begin{align}
&\lim_{\epsi\to0} \int_{(0,1) \times (\frac{\epsi\lambda}4  x_1,\frac{\epsi\lambda}4)} \nabla u_\epsi(x) \varphi(x)\dd x \notag\\
&\quad = \lim_{\epsi\to0} \int_0^1\!\! \int_{x_1}^{
 1} S\big(\tfrac{\epsi\lambda}4\mathbb{I}
        - \tfrac{\epsi\lambda}4\tan\big( \tfrac{\theta^-}{2}\big)e_1
\otimes e_2 -e_1
\otimes e_2 \big)
\varphi(x_1,\tfrac{\epsi\lambda}4z)\dd z\!\dd x_1\notag\\
&\quad =  \int_0^1  (x_1-1)S e_1
\otimes e_2
\varphi(x_1,0)\dd x_1,
\label{eq:limve2}\\
&\lim_{\epsi\to0} \int_{(0,1) \times (\frac{\epsi\lambda}4 ,\frac{\epsi\lambda}2
 )}
\nabla u_\epsi(x) \varphi(x)\dd x \notag\\
&\quad = \lim_{\epsi\to0} \int_0^1\!\! \int_{1}^{
 2} S\big(\tfrac{\epsi\lambda}4\mathbb{I}
        +\beta e_1
\otimes e_2 \big)
\varphi(x_1,\tfrac{\epsi\lambda}4z)\dd z\!\dd x_1 =  \int_0^1 \beta Se_1
\otimes e_2
\varphi(x_1,0)\dd x_1,
\label{eq:limve3}
\\
&\lim_{\epsi\to0} \int_{(0,1) \times (\frac{\epsi\lambda}2,-\frac{\epsi\lambda}4
 x_1 +\frac{3\epsi\lambda}4 )}
\nabla u_\epsi(x) \varphi(x)\dd x \notag\\
&\quad = \lim_{\epsi\to0} \int_0^1\!\! \int_{2}^{
 3-x_1} S\big(\tfrac{\epsi\lambda}4\mathbb{I}
        - \tfrac{\epsi\lambda}4\tan\big( \tfrac{\theta^+}{2}\big)e_1
\otimes e_2 +e_1
\otimes e_2 \big)
\varphi(x_1,\tfrac{\epsi\lambda}4z)\dd z\!\dd x_1\notag\\
&\quad =  \int_0^1  (1-x_1)Se_1 
\otimes e_2
\varphi(x_1,0)\dd x_1,
\label{eq:limve4}
\\
&\lim_{\epsi\to0} \int_{(0,1) \times (-\frac{\epsi\lambda}4
 x_1 +\frac{3\epsi\lambda}4 ,\frac{3\epsi\lambda}4 )}
\nabla u_\epsi(x) \varphi(x)\dd x \notag\\
&\quad = \lim_{\epsi\to0} \int_0^1\!\! \int_{3-x_1}^{
 3} R^+\big(\tfrac{\epsi\lambda}4\mathbb{I}
        + \tfrac{\epsi\lambda}4\tan\big( \tfrac{\theta^+}{2}\big)e_1
\otimes e_2 +e_1
\otimes e_2 \big)
\varphi(x_1,\tfrac{\epsi\lambda}4z)\dd z\!\dd x_1\notag\\
&\quad =  \int_0^1  x_1 R^+e_1 
\otimes e_2
\varphi(x_1,0)\dd x_1,
\label{eq:limve5}
\end{align}
and 
\begin{align}
&\lim_{\epsi\to0} \int_{(0,1) \times (\frac{3\epsi\lambda}4, \epsi\lambda
 )}
\nabla u_\epsi(x) \varphi(x)\dd x \notag\\
&\quad = \lim_{\epsi\to0} \int_0^1\!\! \int_{3}^{
 4} R^+\big(\tfrac{\epsi\lambda}4\mathbb{I}
        +\alpha e_1
\otimes e_2 \big)
\varphi(x_1,\tfrac{\epsi\lambda}4z)\dd z\!\dd x_1 =  \int_0^1
\alpha R^+ e_1
\otimes e_2
\varphi(x_1,0)\dd x_1.
\label{eq:limve6}
\end{align}
 Combining \eqref{eq::rc_weakconv2}--\eqref{eq:limve6} and \eqref{eq:p+-p-}, we finally  obtain \eqref{eq::rc_weakconv1}. 
\end{proof}

\begin{remark}[On the construction in Lemma~\ref{lem:existence_admissible1}]\label{rmk:alternativeconst}
Notice that the main idea of the construction in the proof of Lemma~\ref{lem:existence_admissible1} for dealing with
jumps is to use piecewise affine functions  that are as simple as possible  to accommodate them.  Since triple junctions where two of the three angles add up to $\pi$  are not compatible (compare with~\cite[Lemma~3.1]{ChK17}),  we work with
inclined  interfaces  that stretch over the full width of $\Omega$.

Let  $u\in \Acal\cap SBV_\infty(\Omega;\R^2)$ be as in Lemma~\ref{lem:existence_admissible1},  and assume that either  \(R^+\not= \pm R^-\) or  \(R^+= R^-\). In these cases,  we can simplify  the
construction of  \((u_\epsi)_\epsi\) in the previous proof.
We focus here on stating the counterparts of Figure~\ref{figgen} and \eqref{eq::defhl+l-epsi}, and omit the  detailed calculations, which are very similar to ~\eqref{eq::rc_weakconv2}--\eqref{eq:limve6}.  Note further that these constructions are not just simpler, but also energetically more favorable, 
see Remark~\ref{rmk:optimalitylb} below for more details. 

\begin{itemize}
\item[(i)] 
If  \(R^+\not= \pm R^-\),  we may  replace the construction depicted
in Figure~\ref{figgen} by:
\begin{center}
\begin{figure}[H]
\begin{tikzpicture}[scale=1.5]
  
\draw[dashed, black] 
(0,0) -- (4,0) -- (4, -2) -- (0, -2)
 -- (0,0);

 \draw[Korange!20,fill=Korange!40] 
(0,0) -- (0,.35) -- (4,.35) -- (4,0)
 -- (0,0);
 
  \draw[Korange!20,fill=Korange!40] 
(0,-2) -- (0,-2.35) -- (4,-2.35) -- (4,-2)
 -- (0,-2);


\draw[dashed,black] 
(0,0.03) -- (0,.35) ;

\draw[dashed,black] 
(4,0.03) -- (4,.35) ;

\draw[dashed,black] 
(0,-2.03) -- (0,-2.35) ;

\draw[dashed,black] 
(4,-2.03) -- (4,-2.35) ;


\draw[dashed,black] (0,-.5) -- (4,-.5) ;

\draw[dashed,black] (0,-.5) -- (4,-1.5) ;


\draw[dashed,black] (0,-1.5) -- (4,-1.5) ;


\draw(-.2,-2) node     
        {\begin{small} $\textcolor{black}{0}$\end{small}
        };

\draw(-.2,-1.5) node     
        {\begin{small} $\textcolor{black}{h_\epsi^\rho}$\end{small}
        };

\draw(-.52,-.5) node     
        {\begin{small} $\textcolor{black}{h_\epsi^\rho + \rho\epsi\lambda}$\end{small}
        };

\draw(-.2,0) node     
        {\begin{small} $\textcolor{black}{\varepsilon\lambda}$\end{small}
        };

\draw(-.03,-2.5) node     
        {\begin{small} $\textcolor{black}{0}$\end{small}
        };
        
\draw(3.95,-2.5) node     
        {\begin{small} $\textcolor{black}{1}$\end{small}
        };  
        

\draw(2,-1.77) node     
        {\begin{small} $\textcolor{black}{R^-(\mathbb{I}
        + \tilde \gamma^-_\epsi  e_1 \otimes e_2)}$\end{small}
        };

\draw(1,-1.25) node     
        {\begin{small} $\textcolor{black}{R^-(\mathbb{I}
        + \gamma_\epsi^- e_1 \otimes
e_2)}$\end{small}
        };

\draw(2.95,-.75) node     
        {\begin{small} $\textcolor{black}{R^+(\mathbb{I}
        + \gamma^+_\epsi e_1 \otimes e_2)}$\end{small}
        };

\draw(2,-.25) node     
        {\begin{small}  $\textcolor{black}{R^+(\mathbb{I}
        + \tilde\gamma^+_\epsi e_1 \otimes
e_2)}$\end{small}
        };       

 
\draw(2,.18) node     
        {\begin{small} $\textcolor{black}{R^+}$\end{small}
        };  
 
\draw(2,-2.17) node     
        {\begin{small} $\textcolor{black}{R^-}$\end{small}
        };   
%

\draw(7,0.1) node     
        {\begin{small} $\textcolor{black}{
        \psi^+ - \psi^- = \alpha R^+e_1 + \beta R^-e_1
        }$
        \end{small}
        };

\draw(7,-0.3) node     
        {\begin{small} $\textcolor{black}{
        \theta\in (-\pi,\pi)  \setminus\{0\}   \text{ rotation angle of } (R^-)^T R^+
        }$
        \end{small}
        }; 
        
\draw(7,-0.7) node     
        {\begin{small} $\textcolor{black}{
         \rho\in(0,1), \enspace h_\eps^\rho:= \frac{\eps\lambda - \rho\eps \lambda}{2}
        }$
        \end{small}
        };

\draw(7,-1.15) node     
        {\begin{small} $\textcolor{black}{       
\gamma^+_\epsi:=\frac{1}{\rho\epsi\lambda} + \tan(\frac{\theta}{2}
),\enspace
\gamma^-_\epsi:=\frac{1}{\rho\epsi\lambda} - \tan(\frac{\theta}{2}
)
        }$
        \end{small}
        };                
               
\draw(7,-1.6) node     
        {\begin{small} $\textcolor{black}{
     \tilde \gamma^+_\epsi  \text{ satisfies } \alpha=\lim_{\epsi \to 0} \tilde \gamma^+_\epsi(\epsi
\lambda -
h^\rho_\epsi - \rho\epsi\lambda)
        }$
        \end{small}
        };  
        
\draw(7,-2) node     
        {\begin{small} $\textcolor{black}{
       \tilde \gamma^-_\epsi  \text{ satisfies }    \beta -1=\lim_{\epsi
\to 0} \tilde \gamma^-_\epsi 
h^\rho_\epsi
        }$
        \end{small}
        };          
\end{tikzpicture}
\caption{Alternative construction of   $V_{\eps}$  if \(R^+\not= \pm R^-\).}
\label{figi}
\end{figure}
\end{center}\vspace{-5mm}

\item[(ii)] 
If  \(R\) is constant,  i.e.,~\(R^+=  R^-\),  and  $\psi^+ - \psi^-$ is not parallel to $Re_1$,  the construction in Figure~\ref{figgen} can be replaced  by:
\begin{center}
\begin{figure}[H]
\begin{tikzpicture}[scale=1.5]
  
\draw[dashed, black] 
(0,0) -- (4,0) -- (4, -2) -- (0, -2)
 -- (0,0);

 \draw[Korange!20,fill=Korange!40] 
(0,0) -- (0,.35) -- (4,.35) -- (4,0)
 -- (0,0);
 
  \draw[Korange!20,fill=Korange!40] 
(0,-2) -- (0,-2.35) -- (4,-2.35) -- (4,-2)
 -- (0,-2);


\draw[dashed,black] 
(0,0.03) -- (0,.35) ;

\draw[dashed,black] 
(4,0.03) -- (4,.35) ;

\draw[dashed,black] 
(0,-2.03) -- (0,-2.35) ;

\draw[dashed,black] 
(4,-2.03) -- (4,-2.35) ;

\draw[dashed,black] (0,-.5) -- (4,-.5) ;

\draw[dashed,black] (0,-.5) -- (4,-1.4) ;

\draw[dashed,black] (0,-1.1) -- (4,-2) ;



\draw(-.2,-2) node     
        {\begin{small} $\textcolor{black}{0}$\end{small}
        };

\draw(-.3,-1.1) node     
        {\begin{small} $\textcolor{black}{\rho\epsi\lambda}$\end{small}
        };

\draw(-.52,-.5) node     
        {\begin{small} $\textcolor{black}{\epsi\lambda -h_\epsi^\rho }$\end{small}
        };

\draw(-.2,0) node     
        {\begin{small} $\textcolor{black}{\varepsilon\lambda}$\end{small}
        };

\draw(-.03,-2.5) node     
        {\begin{small} $\textcolor{black}{0}$\end{small}
        };
        
\draw(3.95,-2.5) node     
        {\begin{small} $\textcolor{black}{1}$\end{small}
        };  
        
         \draw(4.13,-1.4) node     
        {\begin{small} $\textcolor{black}{h_{\varepsilon}^{\rho}}$\end{small}
        };


\draw(1,-1.77) node     
        {\begin{small} $\textcolor{black}{R(\mathbb{I}
        + \gamma_\epsi^+ e_1 \otimes e_2)}$\end{small}
        };

\draw(1.85,-1.25) node     
        {\begin{small} $\textcolor{black}{S(\mathbb{I}
        + \gamma_\epsi^- e_1 \otimes
e_2)}$\end{small}
        };

\draw(2.95,-.75) node     
        {\begin{small} $\textcolor{black}{R(\mathbb{I}
        + \gamma_\epsi^+ e_1 \otimes e_2)}$\end{small}
        };

\draw(2,-.25) node     
        {\begin{small}  $\textcolor{black}{R(\mathbb{I}
        + \tilde\gamma_\epsi e_1 \otimes
e_2)}$\end{small}
        };       

 
\draw(2,.18) node     
        {\begin{small} $\textcolor{black}{R}$\end{small}
        };  
 
\draw(2,-2.17) node     
        {\begin{small} $\textcolor{black}{R}$\end{small}
        };   
%

\draw(7,0.1) node     
        {\begin{small} $\textcolor{black}{
        S\in SO(2)\!:  Re_1 \text{ and } Se_1
        \text{ are linearly independent}
        }$
        \end{small}
        };

\draw(7,-0.7) node     
        {\begin{small} $\textcolor{black}{
       \theta\in (-\pi,\pi) \setminus \{0\}  \text{ rotation angle of } R^T
S \color{black}
        }$
        \end{small}
        }; 
        
\draw(7,-0.3) node     
        {\begin{small} $\textcolor{black}{
         \psi^+ - \psi^- = \alpha Re_1 + \beta Se_1,\enspace \beta\not=0, \enspace \iota:={\rm sign}(\beta)
        }$
        \end{small}
        };         

\draw(7,-1.15) node     
        {\begin{small} $\textcolor{black}{
         \rho:=\tfrac{\iota}{2\beta +\iota}\in(0,1),
\enspace h_\eps^\rho:= \frac{\eps\lambda
- \rho\eps \lambda}{2}
        }$
        \end{small}
        };  
        
\draw(7,-1.6) node     
        {\begin{small} $\textcolor{black}{       
\gamma_\epsi^+:=\iota\frac{1}{\rho\epsi\lambda} + \tan(\frac{\theta}{2}
),\enspace
\gamma^-_\epsi:=\iota\frac{1}{\rho\epsi\lambda} - \tan(\frac{\theta}{2}
)
        }$
        \end{small}
        };                
               
\draw(7,-2) node     
        {\begin{small} $\textcolor{black}{
     \tilde \gamma_\epsi  \text{ satisfies } \alpha-\iota=\lim_{\epsi
\to 0} \tilde \gamma_\epsi
h^\rho_\epsi 
        }$
        \end{small}
        };

\end{tikzpicture}
\caption{Alternative construction of   $V_{\eps}$  if   \(R\) is constant and  $\psi^+ - \psi^-$
is not parallel to $Re_1$.}
\label{figii}
\end{figure}
\end{center}\vspace{-5mm}

\item[(iii)] 
If  \(R\) is constant,  i.e.,~\(R^+=  R^-\),  and  $\psi^+ - \psi^-$
is  parallel to $Re_1$, then  we can use the following construction in place of Figure~\ref{figgen}:
\begin{center}
\begin{figure}[H]
\begin{tikzpicture}[scale=1.5]
  
\draw[dashed, black] 
(0,0) -- (4,0) -- (4, -2) -- (0, -2)
 -- (0,0);

 \draw[Korange!20,fill=Korange!40] 
(0,0) -- (0,.35) -- (4,.35) -- (4,0)
 -- (0,0);
 
  \draw[Korange!20,fill=Korange!40] 
(0,-2) -- (0,-2.35) -- (4,-2.35) -- (4,-2)
 -- (0,-2);


\draw[dashed,black] 
(0,0.03) -- (0,.35) ;

\draw[dashed,black] 
(4,0.03) -- (4,.35) ;

\draw[dashed,black] 
(0,-2.03) -- (0,-2.35) ;

\draw[dashed,black] 
(4,-2.03) -- (4,-2.35) ;







\draw(-.2,-2) node     
        {\begin{small} $\textcolor{black}{0}$\end{small}
        };

\draw(-.2,-1.5) node     
        {\begin{small} $\textcolor{black}{}$\end{small}
        };

\draw(-.52,-.5) node     
        {\begin{small} $\textcolor{black}{}$\end{small}
        };

\draw(-.2,0) node     
        {\begin{small} $\textcolor{black}{\varepsilon\lambda}$\end{small}
        };

\draw(-.03,-2.5) node     
        {\begin{small} $\textcolor{black}{0}$\end{small}
        };
        
\draw(3.95,-2.5) node     
        {\begin{small} $\textcolor{black}{1}$\end{small}
        };  
        

\draw(2,-1) node     
        {\begin{small} $\textcolor{black}{R\big(\mathbb{I}
        + \frac{\alpha}{\epsi \lambda} e_1 \otimes e_2\big)}$\end{small}
        };

 
\draw(2,.18) node     
        {\begin{small} $\textcolor{black}{R}$\end{small}
        };  
 
\draw(2,-2.17) node     
        {\begin{small} $\textcolor{black}{R}$\end{small}
        };   
%

\draw(7,-.7) node     
        {\begin{small} $\textcolor{black}{
        \psi^+ - \psi^- = \alpha Re_1
        }$
        \end{small}
        };

\draw(7,-1.2) node     
        {\begin{small} $\textcolor{black}{
        \alpha = \iota
        |\psi^+ - \psi^-|,\enspace \iota:= {\rm sign}(( \psi^+
- \psi^-)\cdot
        Re_1)
        }$
        \end{small}
        }; 
\end{tikzpicture}
\caption{Alternative construction of  $V_{\eps}$  if   \(R\) is
constant and  $\psi^+ - \psi^-$
is  parallel to $Re_1$.}
\label{figiii}
\end{figure}%
\end{center}\vspace{-5mm}
\end{itemize}
Note that in case~(i), the slope $\rho$ of the interfaces can
attain any
value between $0$ and $1$, while in~(ii),  $\rho$
is determined
by the value of  $\beta$.  In terms of the energies, the construction
in case~(iii) provides an optimal approximation, which will be
detailed in Section~\ref{sect:regularization}.
\end{remark}

We proceed by extending Lemma~\ref{lem:existence_admissible1} to arbitrary
functions  \(u\in\Acal\cap SBV_\infty(\Omega;\RR^2)\).

\begin{proposition}\label{prop:SBVinfty}
Let $\Omega=(0,1)\times (-1,1)$.  Then, for every $u\in \Acal\cap SBV_{\infty}(\Omega;\R^2)$, there  exists a sequence $(u_\eps)_\eps\subset W^{1,1}(\Omega;\RR^2)$
with \(\int_\Omega u_\epsi\dd{x} = \int_\Omega u\dd{x} \) and $u_\epsi\in \Acal_\eps$ for all \(\epsi\), and such that   $u_\eps\weaklystar
u$ in $BV(\Omega;\R^2)$ or, in other words, 
\begin{align*}
\Acal\cap SBV_\infty(\Omega;\R^2) \subset \Acal_0,
\end{align*} 
cf.~\eqref{eq:def-A0}. 
\end{proposition}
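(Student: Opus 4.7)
The plan combines the structural decomposition from Remark~\ref{rk:subsets-A}(c) with the single-jump construction of Lemma~\ref{lem:existence_admissible1}. Writing $u(x) = R(x_2)x + \psi(x_2)$ with $R \in PC(-1,1;SO(2))$ and $\psi \in SBV_\infty(-1,1;\R^2)$ satisfying $\psi' \cdot R e_2 = 0$, I collect the (finitely many) jump points of $R$ and $\psi$ as $-1 =: t_0 < t_1 < \cdots < t_m < t_{m+1} := 1$. On each interval $(t_{k-1},t_k)$, $R$ equals a constant rotation $R_k \in SO(2)$ and $\psi$ is Lipschitz with $\psi' = \gamma_k R_k e_1$ for some $\gamma_k \in L^\infty$, so that the restriction of $u$ to each strip $S_k := (0,1)\times(t_{k-1},t_k)$ lies in $\Acal \cap W^{1,\infty}(S_k;\R^2)$.

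For each small $\eps > 0$ and each $k \in \{1,\ldots,m\}$, I pick $s_k^\eps \in \eps\Z$ close to $t_k$ so that the transition slab $N_k^\eps := (0,1) \times (s_k^\eps, s_k^\eps + \eps\lambda)$ lies in a single soft layer $\eps\Ysoft\cap\Omega$ adjacent to the jump line. On each bulk region $T_k^\eps := (0,1)\times(s_{k-1}^\eps + \eps\lambda, s_k^\eps)$, I construct an admissible Lipschitz approximation $\tilde{u}_\eps^k$ by zeroing out the slip on the rigid portion of each period and rescaling it on the soft portion to preserve period averages of $\gamma_k$; this is the familiar construction used in the $W^{1,1}$-recovery of \cite{ChK17}, yielding $\tilde{u}_\eps^k \to u$ in $L^1(T_k^\eps;\R^2)$ together with $\nabla \tilde{u}_\eps^k \weaklystar \nabla u|_{T_k^\eps}$ as measures. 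On each slab $N_k^\eps$, I apply a rescaled, translated copy of the construction from Lemma~\ref{lem:existence_admissible1} with $R^\pm := R_{k+1}, R_k$ and $\psi^\pm := \psi(t_k^+), \psi(t_k^-)$: the inclusion $N_k^\eps \subset \eps\Ysoft\cap\Omega$ makes the rigid-layer constraint vacuous inside the slab, the traces at $x_2 = s_k^\eps$ and $x_2 = s_k^\eps + \eps\lambda$ are affine of the form $R_k x + a_k^\eps$ and $R_{k+1} x + b_k^\eps$ respectively, and (arguing exactly as in \eqref{eq::rc_weakconv1}--\eqref{eq:limve6}) the gradient measure associated with $N_k^\eps$ concentrates on $(0,1)\times\{t_k\}$ with density $[(R_{k+1}-R_k)e_1 x_1 + \psi(t_k^+) - \psi(t_k^-)] \otimes e_2$ as $\eps \to 0$ (noting $s_k^\eps \to t_k$).

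Starting from $k = 0$ and iterating upward, I choose the additive constants in the bulk approximations $\tilde{u}_\eps^k$ so that at each interface $x_2 = s_k^\eps$ the trace of $\tilde{u}_\eps^k$ matches that of $v_k^\eps$ (the slab map), and then adjust the constant of $\tilde{u}_\eps^{k+1}$ so that its trace at $x_2 = s_k^\eps + \eps\lambda$ agrees with that of $v_k^\eps$. The resulting glued function is Lipschitz on $\Omega$, and after a global shift to enforce $\int_\Omega u_\eps\,dx = \int_\Omega u\,dx$ lies in $\Acal_\eps$. The $L^1$-convergence $u_\eps \to u$ follows from convergence on each bulk piece together with $|N_k^\eps| = O(\eps)$, while $Du_\eps \weaklystar Du$ combines the bulk gradient convergence with the controlled concentration on each slab. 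The main obstacle is the consistent affine matching at the interfaces $x_2 = s_k^\eps$: this rests on the observation that, on period boundaries, $\tilde{u}_\eps^k$ is affine in $x_1$ up to an $o(1)$-error (because the rescaled slip $\tilde{\gamma}_\eps^k$ has the same $\eps$-period averages as $\gamma_k$), together with the fact that the Lemma~\ref{lem:existence_admissible1} construction is affine at its top and bottom horizontal boundaries by design.
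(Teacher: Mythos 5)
Your proposal is correct and takes essentially the same route as the paper's proof: approximate the bulk by $R_k\big(\Ibb + \tfrac{\gamma}{\lambda}\mathbbm{1}_{\eps\Ysoft}e_1\otimes e_2\big)$ between jump lines, insert the Lemma~\ref{lem:existence_admissible1} piecewise-affine construction in a soft layer chosen (via an integer multiple of $\eps$) to abut each jump line, glue by additive constants, and shift to enforce zero mean. The paper merely phrases this by writing down the single global gradient field $V_\eps$ and invoking the rank-one-connection lemma once, rather than gluing strip by strip, but the constructions are the same.
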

\begin{proof}
In view of Remark~\ref{rk:subsets-A}~(c),  it holds that  \(J_u
= \bigcup_{i=1}^\ell (0,1) \times \{a_i\}\) for some \(\ell\in\NN\)
and \(a_i\in(-1,1)\) with \(a_1 < a_2<\dots<a_\ell\), and setting \(a_0:=-1\) and \(a_{\ell+1}:=1\),  gives 
\begin{align}
Du= &\,\sum_{i=0}^\ell R_i(\Ibb + \gamma  e_1\otimes e_2)\Lcal^2\lfloor  \bigl((0,1)\times (a_i, a_{i+1})\bigr)
\notag\\ &+ \sum_{i=1}^{\ell} [(R_{i} - R_{i-1}) x_1 e_1 +(R_ia_i
e_2 +\psi_i^+ - R_{i-1}a_i
e_2 -\psi_{i}^-)]\otimes e_2 \Hcal^1\lfloor \bigl((0,1)\times\{a_i\}\bigr),\label{eq:Du-Apc}
\end{align} 
where  \(\gamma\in L^1(-1,1)\), and  \(R_i\in SO(2)\) and \(\psi_i\in\RR^2\)
for \(i=0,...,\ell\). 

We now perform a similar construction  as in  Lemma~\ref{lem:existence_admissible1}
in a convenient  softer  layer \textit{near} each \(a_i\), accounting for the possibility
that one or more of the jump lines may not intersect \(\eps\Ysoft\cap
\Omega\), and replacing
\(R^+\) by \(R_{i}\),  \(R^-\) by \(R_{i-1}\), \(\psi^+\) by \(R_ia_i
e_2 +\psi_i^+ \), and \(\psi^-\) by
\(R_{i-1}a_i
e_2 +\psi_{i}^-\).

 To be precise,  fix  \(\epsi>0\) and \(i\in \{1, ..., \ell\} \).  Let \(S_i\in SO(2)\) be such that \textit{(i)} \(S_i\not\in\{R_{i-1},R_i\}\);
\textit{(ii)} \(S_ie_1 \) and \(R_{i}e_1\) are linearly independent;
\textit{(iii)}~%
\(\theta_{i}^-\), \(\theta_{i}^+ \in (-\pi,\pi)  \setminus\{0\}\)  are the rotation angles of \(S_i^T R_{i-1}\) and \(S_i^T R_{i}\), respectively. By \textit{(ii)},
there exist \(\alpha_i\), \(\beta_i\in \RR\) such that
\begin{equation}
\begin{aligned}
\label{eq:p+-p-Apc}
R_ia_i
e_2 +\psi_i^+ - R_{i-1}a_i
e_2 -\psi_{i}^- = \alpha_i R_ie_1 + \beta _iS_ie_1.
\end{aligned}
\end{equation}
 Moreover, we set 
\begin{equation*}
\begin{aligned}
\gamma^+_{\epsi,i}:= \frac{4\alpha_i}{\epsi\lambda},\enspace
\gamma_{\epsi,i}:= \frac{4\beta_i}{\epsi\lambda},\enspace
\mu^\pm_{\epsi,i}:=
\pm\frac{4}{\epsi\lambda} + \tan\Big( \frac{\theta^\pm_i}{2}\Big),
 \enspace \tilde \mu^\pm_{\epsi,i}:=
\pm\frac{4}{\epsi\lambda} - \tan\Big( \frac{\theta^\pm_i}{2}\Big),
\end{aligned}
\end{equation*}
and let \(\kappa_\epsi^i
\in \ZZ\) be the unique integer such that \(a_i \in \epsi[ \kappa_\epsi^i, \kappa_\epsi^i+1)\).
 Observing that   \(a_i\not = a_j\)  for $i, j\in \{1, \ldots, \ell\}$ with  \(i\not= j\) and
\(a_i\in(-1,1)\)  for all $i\in \{1, \ldots, \ell\}$,  we may assume
that the sets $\{\epsi[ \kappa_\epsi^i, \kappa_\epsi^i+1)\}_{ i=1, \ldots, \ell }$ are pairwise disjoint, and that  \(\bigcup_{i=1}^\ell \epsi[ \kappa_\epsi^i,
\kappa_\epsi^i+1] \subset(-1,1)\)  (this is true for sufficiently small \(\epsi>0\)).
Finally,  with  \( \kappa_\epsi^0:= -\lambda-\tfrac1\epsi\) and \( \kappa_\epsi^{\ell
+1}:= \tfrac1\epsi\), let  \( V_\epsi  \in L^1(\Omega;\RR^{2\times 2})\)
be the function defined by 
\begin{equation*}
\begin{aligned}
 V_\epsi(x)  := \begin{cases}
R_{i}(\Ibb + \tfrac{\gamma}{\lambda} \mathbbm{1}_{\eps\Ysoft}e_1 \otimes e_2) &\text{if }
x\in  (0,1) \times (
\epsi\lambda+\epsi\kappa_\epsi^i,\epsi\kappa_\epsi^{i+1} )\text{ for some } i\in\{0,
.., \ell\},\\
R_{i}(\Ibb +\gamma^+_{\epsi,i} e_1 \otimes e_2) &\text{if }
x\in  (0,1) \times (\frac{3\epsi\lambda}4+\epsi\kappa_\epsi^i, \epsi\lambda+\epsi\kappa_\epsi^i )\text{ for some } i\in\{1, .., \ell\},\\
R_{i}(\Ibb +  \mu^+_{\epsi,i} e_1 \otimes e_2) & \text{if
} x_1\in  (0,1) \text{ and } x_2\in (-\frac{\epsi\lambda}4
 x_1 +\frac{3\epsi\lambda}4 +\epsi\kappa_\epsi^i,\frac{3\epsi\lambda}4 +\epsi\kappa_\epsi^i)\\&\text{for some } i\in\{1, .., \ell\},\\ 
S_i(\Ibb +  \tilde\mu^+_{\epsi,i} e_1 \otimes e_2) & \text{if
} x_1\in  (0,1) \text{ and } x_2\in (\frac{\epsi\lambda}2+\epsi\kappa_\epsi^i,-\frac{\epsi\lambda}4
 x_1 +\frac{3\epsi\lambda}4 +\epsi\kappa_\epsi^i)\\&\text{for some }  i\in\{1, .., \ell\}, \\  
S_i(\Ibb +  \gamma_{\epsi,i} e_1 \otimes e_2) &\text{if
} x\in  (0,1) \times (\frac{\epsi\lambda}4 +\epsi\kappa_\epsi^i,\frac{\epsi\lambda}2
 +\epsi\kappa_\epsi^i)\text{ for some } i\in\{1, .., \ell\},\\
S_i(\Ibb +  \tilde\mu^-_{\epsi,i} e_1 \otimes e_2) &\text{if
}  x_1\in  (0,1) \text{ and } x_2\in(\frac{\epsi\lambda}4  x_1+\epsi\kappa_\epsi^i,\frac{\epsi\lambda}4+\epsi\kappa_\epsi^i)  \text{ for some } i\in\{1, .., \ell\},\\
R_{i-1}(\Ibb +  \mu^-_{\epsi,i} e_1 \otimes e_2) &\text{if
}  x_1\in  (0,1) \text{ and } x_2\in (\epsi\kappa_\epsi^i,\frac{\epsi\lambda}4  x_1+\epsi\kappa_\epsi^i)  \text{ for some } i\in\{1, .., \ell\}.
\end{cases}
\end{aligned}
\end{equation*}

As in the proof of Lemma~\ref{lem:existence_admissible1}, invoking \cite[Lemma~3.1]{ChK17}  on  rank-one connections in
\(\Mcal_{e_1}\), we find that  $V_\eps$ is a gradient field, meaning that  there is \(u_\epsi \in W^{1,  \infty }(\Omega;\RR^2)\)
such that  \(\nabla u_\epsi =V_\eps\).  Adding a suitable
constant  allows us to  assume that \(\int_\Omega u_\epsi\dd{x} = \int_\Omega
u\dd{x}\).  By construction,  $(u_\eps)_\eps$
is a uniformly
bounded sequence
in $W^{1,1}(\Omega;\R^2)$ such that $u_\epsi\in \Acal_\eps$ for
all \(\epsi\) (see \eqref{eq:def-Aepalt}).   To prove
that  
  $u_\eps\weaklystar
u$ in $BV(\Omega;\R^2)$, it suffices to show that%
\begin{equation}
\label{eq::rc_weakconv-Apc}
\begin{aligned}
Du_\eps\weaklystar Du \text{  in } \Mcal(\Omega;\R^{2\times 2}).
\end{aligned}
\end{equation}

The proof of \eqref{eq::rc_weakconv-Apc} follows along the lines of \eqref{eq::rc_weakconv}. For this reason, we only highlight the main differences.  First, note that  the conditions \(\epsi\kappa_\epsi^0=  - \epsi \lambda -1= -\epsi\lambda + a_0\), 
\(\epsi\kappa_\epsi^{\ell
+1}=1= a_{\ell+1}\), and  \(\epsi
\kappa_\epsi^i\leq a_i \leq 
\epsi(\kappa_\epsi^i+1)\) yield
\begin{equation*}
\begin{aligned}
\lim_{\epsi \to0} \epsi \kappa_\epsi^i = a_i\enspace \text{ for all } i\in\{0,...,\ell + 1\}.
\end{aligned}
\end{equation*}
Hence,   \(\mathbbm{1}_{(0,1)\times (
\epsi\lambda+\epsi\kappa_\epsi^i,\epsi\kappa_\epsi^{i+1} )} \to \mathbbm{1}_{(0,1)\times (a_i,a_{i+1} )}\) 
and \(\gamma\mathbbm{1}_{(0,1)\times (
\epsi\lambda+\epsi\kappa_\epsi^i,\epsi\kappa_\epsi^{i+1} )} \to
\gamma \mathbbm{1}_{(0,1)\times (a_i,a_{i+1} )}\) in \(L^1(\Omega)\)
for \(i\in\{0,...,\ell
+ 1\}\). 
On the other hand, by the Riemann--Lebesgue lemma, we have \(\mathbbm{1}_{\eps\Ysoft}
\weaklystar \lambda\) in \(L^\infty(\RR^2)\);   thus, 
\begin{equation*}
\begin{aligned}
\lim_{\epsi\to0} \int_{(0,1) \times (
\epsi\lambda+\epsi\kappa_\epsi^i,\epsi\kappa_\epsi^{i+1} )} \nabla u_\epsi(x) \ffi(x)\dd x &= \lim_{\epsi\to0} \int_{\Omega} R_{i}(\Ibb + \tfrac{\gamma}{\lambda} \mathbbm{1}_{\eps\Ysoft}e_1
\otimes e_2)\mathbbm{1}_{(0,1)\times (
\epsi\lambda+\epsi\kappa_\epsi^i,\epsi\kappa_\epsi^{i+1} )}
\ffi(x)\dd x\\&= \int_{(0,1) \times (a_i, a_{i+1})} R_i
(\Ibb +\gamma(x_2) e_1\otimes  e_2)\ffi(x)\dd x
\end{aligned}
\end{equation*}
for all \(i\in\{0,...,\ell
\}\) and \(\ffi\in C_0(\Omega)\). Arguing as in \eqref{eq:limve1} with the change of variables \(z= \tfrac4{\epsi\lambda}(x_2 - \epsi \kappa_\epsi^i)\), leads to 
\begin{align*}
&\lim_{\epsi\to0} \int_{(0,1) \times (\epsi\kappa_\epsi^i,\frac{\epsi\lambda}4  x_1+\epsi\kappa_\epsi^i)} \nabla
u_\epsi(x) \varphi(x)\dd x \notag\\
&\quad = \lim_{\epsi\to0} \int_0^1\!\! \int_{\epsi\kappa_\epsi^i}^{\frac{\epsi\lambda}4
 x_1 + \epsi\kappa_\epsi^i}  R_{i-1}  \big(\mathbb{I}
        + \tan\big( \tfrac{\theta^-_i}{2}\big)e_1 \otimes e_2- \tfrac{4}{\epsi\lambda}e_1
\otimes e_2\big) \varphi(x)\dd x_2\!\dd x_1
 \notag\\
&\quad = \lim_{\epsi\to0} \int_0^1\!\! \int_0^{
 x_1}  R_{i-1}  \big(\tfrac{\epsi\lambda}4\mathbb{I}
        + \tfrac{\epsi\lambda}4\tan\big( \tfrac{\theta^-_i}{2}\big)e_1 \otimes
e_2 -e_1
\otimes e_2 \big)
\varphi(x_1,\tfrac{\epsi\lambda}4z+ \epsi\kappa_\epsi^i)\dd z\!\dd x_1\notag\\
&\quad = - \int_0^1\!\! \int_0^{
 x_1}  R_{i-1}  e_1
\otimes e_2
\varphi(x_1,a_i)\dd z\!\dd x_1 = - \int_0^1  R_{i-1}  x_1e_1
\otimes e_2
\varphi(x_1,a_i)\dd x_1
\end{align*}
 for all \(i\in\{1,...,\ell
\}\) and \(\ffi\in C_0(\Omega)\).  Similarly,  one can calculate the counterparts to \eqref{eq:limve2}--\eqref{eq:limve6} in the present setting. In view of~\eqref{eq:Du-Apc} and \eqref{eq:p+-p-Apc},  we deduce \eqref{eq::rc_weakconv-Apc}, which ends the proof. 
\end{proof}

\begin{remark}[On the construction in Proposition~\ref{prop:SBVinfty}]\label{rmk:onSBVinfty}
We observe that the sequence  of Lipschitz functions  \((u_\epsi)_\epsi\) constructed
in Proposition~\ref{prop:SBVinfty}  to approximate a given $u\in \Acal\cap SBV_\infty(\Omega;\R^2)$  is such that
\begin{equation*}
\begin{aligned}
\lim_{\epsi\to0} \int_\Omega |\nabla u_\epsi|\dd x  \sim |D u|(\Omega) + 2\ell.
\end{aligned}
\end{equation*}
In other words, the asymptotic behavior of the total variation
of \((u_\epsi)_\epsi\) incorporates a positive term that is proportional
to the number of jumps of the  limit  function. This fact  prevents
us from bootstrapping the argument in Proposition~\ref{prop:SBVinfty}
to generalize it  to an arbitrary function in \(\Acal \cap SBV(\Omega;\RR^2)\). 
\end{remark}

 An analogous  statement to Proposition~\ref{prop:SBVinfty} holds in \(\Acal^\parallel\).

\begin{proposition}\label{prop:Aparallel}
 Let $\Omega=(0,1)\times (-1,1)$.
 If $u\in \Acal^\parallel$,   then there exists
a sequence $(u_\eps)_\eps\subset W^{1,1}(\Omega;\RR^2)$
such that $u_\epsi\in \Acal_\eps$ for all \(\epsi\) and  $u_\eps\weaklystar
u$ in $BV(\Omega;\R^2)$;  that is, 
\begin{align*}
\Acal^\parallel \subset \Acal_0.
\end{align*} 

\end{proposition}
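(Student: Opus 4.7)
The key observation is that within the subclass $\Acal^\parallel$, the rotation $R$ is a fixed element of $SO(2)$ and the entire $BV$-character of the deformation is carried by the scalar function $\vartheta$. This reduces the constraint-preserving approximation problem to a purely one-dimensional one for $\vartheta$, which has already been solved in Lemma~\ref{lem:approx_layers}. In particular, the complicated piecewise-affine interpolation of Lemma~\ref{lem:existence_admissible1} (needed to accommodate jumps in the rotation) is not required here, since every jump of $u\in\Acal^\parallel$ is parallel to $Re_1$ and can therefore be recovered by a single large slip concentrated on a softer layer, in the spirit of case~(iii) of Remark~\ref{rmk:alternativeconst}.

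Concretely, given $u\in\Acal^\parallel$, write $u(x)=Rx+\vartheta(x_2)Re_1+c$ for a.e.\! $x\in\Omega$ with $R\in SO(2)$, $\vartheta\in BV(a_\Omega,b_\Omega)$, and $c\in\R^2$. Applying Lemma~\ref{lem:approx_layers} with $d=1$ and $I=(a_\Omega,b_\Omega)$ to $w=\vartheta$ produces a sequence $(\vartheta_\eps)_\eps\subset W^{1,\infty}(a_\Omega,b_\Omega)$ such that $\vartheta_\eps\to\vartheta$ in $L^1(a_\Omega,b_\Omega)$, $\int_{a_\Omega}^{b_\Omega}|\vartheta_\eps'|\dd{t}\to |D\vartheta|(a_\Omega,b_\Omega)$, and $\vartheta_\eps'=0$ on $\eps I_{\rm rig}\cap(a_\Omega,b_\Omega)$. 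I then define
\[
u_\eps(x):=Rx+\vartheta_\eps(x_2)Re_1+c, \qquad x\in\Omega,
\]
which belongs to $W^{1,\infty}(\Omega;\R^2)$ with
\[
\nabla u_\eps(x)=R\bigl(\Ibb+\vartheta_\eps'(x_2)\,e_1\otimes e_2\bigr) \quad \text{for a.e.\! } x\in\Omega.
\]

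A short verification shows $u_\eps\in\Acal_\eps$: since $\det(\Ibb+\alpha\,e_1\otimes e_2)=1$ and $|(\Ibb+\alpha\,e_1\otimes e_2)e_1|=|e_1|=1$ for every $\alpha\in\R$, one has $\nabla u_\eps\in\Mcal_{e_1}$ a.e.\! in $\Omega$; moreover, on $\eps\Yrig\cap\Omega\subset\R\times(\eps I_{\rm rig}\cap(a_\Omega,b_\Omega))$ the derivative $\vartheta_\eps'$ vanishes, so $\nabla u_\eps=R\in SO(2)$ there, in accordance with \eqref{eq:def-Aepalt}.

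Finally, the weak$^\ast$ convergence $u_\eps\weaklystar u$ in $BV(\Omega;\R^2)$ follows from two observations. First, since $u_\eps-u=(\vartheta_\eps-\vartheta)\,Re_1$ depends only on $x_2$ and $\Omega$ is bounded, $u_\eps\to u$ in $L^1(\Omega;\R^2)$ by Fubini and the $L^1$-convergence of $\vartheta_\eps$. Second, the estimate $|\nabla u_\eps|\le|R|+|\vartheta_\eps'|$ together with property~$(ii)$ of Lemma~\ref{lem:approx_layers} yields $\sup_\eps\|\nabla u_\eps\|_{L^1(\Omega;\R^{2\times2})}<\infty$, so $(u_\eps)_\eps$ is uniformly bounded in $BV(\Omega;\R^2)$. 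Combining these two facts with the standard weak$^\ast$ compactness of bounded sequences in $BV$ and the uniqueness of the $L^1$-limit gives $u_\eps\weaklystar u$, which proves $\Acal^\parallel\subset\Acal_0$. The only genuinely delicate point is the $BV$-density argument inside Lemma~\ref{lem:approx_layers}, which has already been carried out; everything else in the present proposition is a direct product-type construction.
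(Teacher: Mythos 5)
Your proof is correct. The genuinely different feature is that you apply Lemma~\ref{lem:approx_layers} \emph{directly to the full scalar profile} $\vartheta$, whereas the paper first uses \eqref{1dsplitting} to split $\vartheta=\vartheta_a+\vartheta_s$ into a Sobolev part and a singular part, and then handles the two pieces with different mechanisms: the absolutely continuous part $\vartheta_a'$ is rescaled by $1/\lambda$ and concentrated on the soft layers via a Riemann--Lebesgue oscillation argument, while Lemma~\ref{lem:approx_layers} is invoked only for $\vartheta_s$. Your route is shorter and avoids the decomposition entirely, and the verification of $u_\eps\in\Acal_\eps$ and $u_\eps\weaklystar u$ is exactly as you argue (indeed, the $L^1$-convergence and the uniform $BV$-bound already force weak$^\ast$-convergence without even appealing to compactness, by testing $Du_\eps$ against $C^1_c$ and density). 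What the paper's split buys is not correctness but \emph{bookkeeping} in Step~3 of the proof of Theorem~\ref{thm:Gamma_convergence}: keeping the two contributions separate makes the matching between $\int_\Omega|\vartheta'|\,\mathrm{d}x$ and $|D^su|(\Omega)$ in $E^\delta(u)$ manifest. With your unified construction, the same recovery property still holds --- one gets $\int_\Omega|\gamma_\eps|=\int_{-1}^1|\vartheta_\eps'|\to|D\vartheta|(-1,1)=\int_\Omega|\vartheta'|\,\mathrm{d}x+|D^su|(\Omega)$ directly from property~$(ii)$ of Lemma~\ref{lem:approx_layers} --- so your sequence would serve equally well as a recovery sequence for $E^\delta$, just with the two terms appearing merged rather than tracked separately. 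The only cosmetic omission is the mean-zero normalization constant added in the paper's $w_\eps$, which is irrelevant for the statement of Proposition~\ref{prop:Aparallel} itself.
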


\begin{proof} 
Let $u\in \mathcal{A}^\parallel$.  Based on~\eqref{eq:def-Apar} and
 \eqref{1dsplitting}, we can split $u$ into $u=v+w,$ 
 where 
 \begin{align}\label{vw}
 v(x) :=  Rx +\vartheta_a(x_2)
Re_1+c  \quad \text{and} \quad w(x):=\vartheta_s(x_2)
Re_1 \qquad  \text{ for $x\in \Omega$, }
\end{align}
with $R\in SO(2)$,
\(c\in\RR^2\), \(\vartheta_a\in W^{1,1}(-1,1)\), and \(\vartheta_s\in BV(-1,1)\) such that \(\vartheta_s'=0\).
 By construction, we have  that $v\in W^{1,1}(\Omega;\R^2)$
with $\nabla v(x)= R(\Ibb +\vartheta_a'(x_2) e_1\otimes e_2)$.

 For every $\eps>0$, let  $v_\eps \in W^{1,1}(\Omega;\R^2)$ be the function satisfying $\int_{\Omega} v_\eps \dd{x} = \int_{\Omega}
v\dd{x}$ and 
\begin{equation}
\label{eq:grad-v-ep}
\nabla v_\eps (x)=R\Big(\mathbb{I}  + \frac{\vartheta_a' (x_2)}{\lambda}\mathbbm{1}_{\eps\Ysoft}(x)e_1\otimes
e_2\Big).
\end{equation} 
 By the  Riemann--Lebesgue lemma,
\begin{align}\label{conv56}
v_\eps\weakly v\quad\text{ in $W^{1,1}(\Omega;\R^{2\times 2})$.}
\end{align}

On the other hand, applying Lemma~\ref{lem:approx_layers} to $\vartheta_s$, we can find a  sequence $(\vartheta_\eps)_\eps\subset W^{1, \infty}(-1,1)$ such that \(\vartheta_\epsi \tostar \vartheta_s\) in \(BV(-1,1)\)
and $\vartheta_\epsi' = 0$ on $\eps I_{\rm rig}\cap  (-1,1)$.  Then, setting
$w_\eps(x):= \vartheta_\epsi(x_2) Re_1 +\int_\Omega (w-\vartheta_\epsi(x_2) Re_1)\dd{x}$  yields  
\begin{align}\label{iden77}
\nabla w_\epsi(x)= \vartheta_\epsi'(x_2) Re_1\otimes e_2 =
\vartheta_\epsi'(x_2) \mathbbm{1}_{\eps\Ysoft}Re_1\otimes
e_2
\end{align}
and  
\begin{align}\label{conv57}
w_\eps \tostar w\quad \text{ in $BV(\Omega;\R^2)$.}
\end{align}

 We define the maps  \(u_\eps
:= v_\eps+w_\eps \) in $W^{1,1}(\Omega;\RR^2)$ for every $\eps$,
  
 and infer from~\eqref{eq:grad-v-ep} and~\eqref{iden77}  that 
\begin{align*}
\nabla u_\epsi
=R(\mathbb{I}  + \gamma_\epsi e_1\otimes
e_2\big),
\end{align*} where \(\gamma_\epsi (x):=
\big(\frac{\vartheta_a'(x_2)}{\lambda} +\vartheta_\epsi'(x_2)\big)\mathbbm{1}_{\eps\Ysoft}(x) \)
 is a function in \(L^1(\Omega)\) satisfying  $\gamma_\epsi=0$ in $\eps\Yrig\cap\Omega$. In particular, $u_\eps\in \Acal_\eps$ for all $\eps$.

Combining~\eqref{conv56} and~\eqref{conv57} shows that
 $u_\eps \weaklystar v+w=u$ in $BV(\Omega;\R^2)$, which finishes the proof. 
\end{proof}

Finally, we prove Theorem~\ref{thm:SBVinfty}.

\begin{proof}[Proof of Theorem~\ref{thm:SBVinfty}]
In view of the discussion in Section~\ref{subs:geom}, it suffices
to prove the  statement  on a  rectangle  of the form  \((c_\Omega,d_\Omega) \times (a_\Omega,b_\Omega)\),
where we recall~\eqref{aOmega} and~\eqref{cOmega}. 
 A simple modification
of the proofs of Propositions~\ref{con:compactness},
\ref{prop:SBVinfty}, and~\ref{prop:Aparallel}
 shows that these results hold for any
such  rectangles,  from which  Theorem~\ref{thm:SBVinfty}
follows. 
\end{proof}

\section{  A lower bound on the homogenized energy  }\label{sec:par}


In this section, we present  partial results for the homogenization problem for layered composites with rigid components discussed in the Introduction. More precisely, we establish a lower bound estimate on the asymptotic behavior of the sequence of energies $(E_{\eps})_{\eps}$ (see \eqref{eq:defEeps}), and highlight the main difficulties in the construction of matching upper bounds. Note that the following analysis is restricted to the case $s=e_1$.

As a start, we first give alternative representations for the involved energies,  which will be useful in the sequel.

\begin{remark}[Equivalent formulations for \boldmath{$E_{\eps}$} and \boldmath{$E$}]
\label{rmk:formulationsofEe} 
 
 In view of the definition of \(\Acal_\epsi\) (see~\eqref{eq:def-Aep}), it is straightforward to check  that the functional $E_\eps$ in \eqref{eq:defEeps} satisfies
\begin{align*}
E_\eps(u)  & =\begin{cases} \displaystyle \int_\Omega \sqrt{\abs{\partial_2
u}^2-1}\dd{x} & \text{if $u\in \Acal_\eps$,}\\[0.1cm]
\infty & \text{otherwise,}
\end{cases} =\begin{cases} \displaystyle \int_\Omega \sqrt{|\nabla u|^2 -2\det
\nabla u}\dd{x} & \text{if $u\in \Acal_\eps$,}\\[0.1cm]
\infty & \text{otherwise,}
\end{cases}
\end{align*}
for \(u\in L^1_0(\Omega;\RR^2)\).  Similarly, according to Proposition~\ref{prop:charA},  the functional $E$ from \eqref{eq:defE} can be expressed as
\begin{align*}
E(u) & = \begin{cases}
\displaystyle \int_{\Omega} |\gamma| \dd{x} + |D^s u|(\Omega)& \text{if
$u\in \Acal$,}\\
\infty & \text{otherwise,}
\end{cases}\end{align*}
for \(u\in L^1_0(\Omega;\RR^2)\). 
\end{remark}

 We can now provide a bound from below on $\Gamma$-$\liminf_{\eps\to 0}E_{\eps}$ and prove Theorem~\ref{theo:Gamma_convergence_new}.

\begin{proof}[Proof of Theorem \ref{theo:Gamma_convergence_new}]
For clarity, we subdivide the proof into two steps. In the first one, we establish the compactness property. In the second step, we  provide two alternative proofs of \eqref{eq:GliminfEeps}. The first proof is based on
a Reshetnyak's lower semicontinuity result, while the second version is more elementary, relying 
on the weak$^\ast$ lower semicontinuity of the total variation of a measure. Either
of the arguments highlights a different feature of the representation of
$\Acal$.

\medskip

\noindent\emph{Step~1: Compactness}.
Assume that \((u_\epsi)_\epsi\subset L^1_0(\Omega;\RR^2)\) is such that \(\sup_\epsi E_\eps(u_\eps)<\infty\). Then, \(u_\epsi\in \Acal_\epsi\) and \(\sup_\epsi 
\norm{\nabla u_\eps}_{L^1(\Omega;\R^{2\times 2})}
<\infty\). Hence, using the Poincar\'e--Wirtinger inequality,  there exist a subsequence \((u_{\epsi_j})_{j\in\NN}\) and $u\in L^1_0(\Omega;\RR^2) \cap BV(\Omega;\R^2)$ such that $u_{\eps_j}\weaklystar u$ in $BV(\Omega;\R^2)$. By  Proposition~\ref{con:compactness}, we conclude that $u\in L^1_0(\Omega;\RR^2) \cap \Acal$. 
\medskip

\noindent\emph{Step~2:  Lower bound.} Let \((u_\epsi)_\epsi\subset L^1_0(\Omega;\RR^2)\) and  \(u\in L^1_0(\Omega;\RR^2)\)   be  such that $u_\eps\to u$ in $L^1(\Omega;\R^2)$. We want to  show  that 
\begin{equation}
\label{eq:lowerbounde1}
\begin{aligned}
\liminf_{\eps \to 0} E_\eps( u_\eps) \geq E(u). 
\end{aligned}
\end{equation}

To prove \eqref{eq:lowerbounde1}, one may assume without loss of generality that the limit inferior on the right-hand side of \eqref{eq:lowerbounde1} is actually a limit and that this limit is finite.  Then, \(u_\eps \in \Acal_\eps\) and $E_\eps(u_\eps)<C$  for all $\eps$, where  \(C>0\) is a constant independent of \(\eps\).
 Hence, by Step~1, \(u_\eps \weaklystar u\) in \(BV(\Omega;\R^2)\) and \(u\in\Acal\). 

 \emph{Step~2a: Version~I. }  We observe that the map 
$\R^{2\times 2} \ni F\mapsto \sqrt{|F|^2-2\det F} $ is convex (see \cite{CoT05}) and  one-homogeneous. Consequently,  it follows from Remark~\ref{rmk:formulationsofEe}   and Reshetnyak's
lower semicontinuity theorem (see \cite[Theorem~2.38]{AFP00}), under consideration of our notation for the polar decomposition  $Du =g_u|Du|$ introduced in Section~\ref{subs:BV}, that 
\begin{equation}\label{eq:lbe11}
\begin{aligned}
\liminf_{\eps\to 0} E_\eps(u_\eps) = \liminf_{\eps\to0}\int_\Omega \sqrt{|\nabla u_\eps|^2 -2\det
\nabla u_\eps}\dd{x} \geq\int_\Omega  \sqrt{|g_u|^2-2\det g_u}
\dd{|Du|}. 
\end{aligned}
\end{equation}

 Since  \(\nabla u=R(\Ibb+\gamma e_1\otimes e_2)  \) with \(R\in BV(\Omega;SO(2)) \) and \((D^s u)e_1=0\) (see~\eqref{eq:charAa}), we have \(|\nabla u|^2
-2\det
\nabla u = |\gamma|^2\) for \(\Lcal^2\)-a.e.~in \(\Omega\) and \(\det g_u
= 0\) for \(|D^su|\)-a.e.~in \(\Omega\). Thus, 
\begin{equation}\label{eq:lbe12}
\begin{aligned}
&\int_\Omega \sqrt{ |g_u|^2-2\det g_u}
\dd{|Du|}\\
& \quad = \int_\Omega \sqrt{|\nabla u|^2
-2\det
\nabla u}\dd{x} + \int_\Omega \sqrt{|g_u|^2-2\det g_u\color{black}}\dd{|D^su|}\\
&\quad = \int_\Omega |\gamma| \dd{x} + \abs{D^su}(\Omega) = E(u),
\end{aligned}
\end{equation}
where we also used that the relation \(|g_u| = 1\) holds \(|D^su|\)-a.e.~in \(\Omega\).

 From \eqref{eq:lbe11} and \eqref{eq:lbe12}, we deduce \eqref{eq:lowerbounde1}.
 
\medskip
 \noindent\textit{Step~2b: Version~II. }  By the definition of $\Acal_\eps$ and~\eqref{Me1}, 
\begin{align*}
\nabla u_\eps = R_\eps + \gamma_\eps R_\eps e_1\otimes e_2
\end{align*}
with $R_\eps\in L^\infty(\Omega;SO(2))$ and $\gamma_\eps\in L^1(\Omega)$. 
Since $|\gamma_\eps R_\eps e_1\otimes e_2| =|\gamma_\eps|$ due to  \(|R_\epsi e_1| =1\), the estimate \(E_\epsi(u_\epsi)=\int_\Omega |\gamma_\epsi|\dd x  < C\)  implies  that \((\gamma_\eps R_\eps e_1\otimes e_2)_\epsi\) is uniformly bounded in $L^1(\Omega;\R^{2\times
2})$. Hence, after extracting a subsequence if necessary (not relabeled), 
\begin{align*}
\big(\gamma_\eps R_\eps e_1 \otimes e_2\big) \Lcal^2\lfloor \Omega\weaklystar \nu \qquad \text{in $\Mcal(\Omega;\R^{2\times 2})$}\end{align*}
 for some $\nu\in \Mcal(\Omega;\R^{2\times 2})$. Note further that the convergence $\nabla u_{\eps}\Lcal^2\lfloor \Omega\weaklystar Du$ in $\Mcal(\Omega;\R^{2\times 2})$ along with \eqref{eq:charAa}  yields also  $R_\eps\weaklystar R$ in $L^\infty(\Omega;\R^{2\times 2})$,  where $R\in L^\infty(\Omega;SO(2))$ satisfies in particular that $(\nabla u)e_1=Re_1$.  
 Hence, we have  
\begin{align*}
\nu = Du-R\Lcal^2\lfloor \Omega = (\gamma Re_1  \otimes  e_2 )\Lcal^2\lfloor \Omega +\ D^su,
\end{align*}
where the last equality follows again from~\eqref{eq:charAa}, and by the lower semicontinuity of the total variation, 
\begin{align*}
 \liminf_{\eps\to 0} E_\epsi(u_\epsi) &= \liminf_{\eps\to 0}\int_{\Omega} |\gamma_\eps| \dd{x}= \liminf_{\eps\to 0}\int_\Omega |\gamma_\eps R_\eps e_1\otimes e_2|\dd{x}   \\ & \geq |\nu|(\Omega)=\int_{\Omega} | \gamma R e_1 \otimes  e_2 |\dd x + |D^su|(\Omega) = \int_{\Omega}|\gamma|\dd{x} + |D^su|(\Omega) = E(u).\qedhere
\end{align*}
\end{proof}

\begin{remark}[Discussion regarding optimality  of the lower bound]\label{rmk:optimalitylb}
(a) The  lower bound \eqref{eq:GliminfEeps}
is optimal in \(\Acal\cap W^{1,1}(\Omega;\R^2)\cap L^1_0(\Omega;\RR^2)\)
and, more generally  (cf.~also Remark~\ref{rk:subsets-A}),  in the set \(\Acal^{\parallel}\cap L^1_0(\Omega;\RR^2)\)
introduced in \eqref{eq:def-Apar}.
Precisely, we have 
\begin{align}
\Gamma(L^1)\text{-}\lim_{\eps\to 0} E_\eps(u)
= E(u)\label{eq:GlimsupEeps}
\end{align}
for all \(u\in \Acal^{\parallel}\cap L^1_0(\Omega;\RR^2)\). 
In view of  \eqref{eq:GliminfEeps},
the proof of \eqref{eq:GlimsupEeps} is  directly  
related to the ability to construct
a recovery sequence. We  detail 
two alternative constructions
 for \(u\in \Acal^{\parallel}\)
in Section~\ref{sect:regularization}
below.  For illustration, we treat here the simpler special case where \(u\in \Acal\cap W^{1,1}(\Omega;\R^2)\cap L^1_0(\Omega;\RR^2)\). 

If $u\in \Acal\cap W^{1,1}(\Omega;\R^2)\cap L^1_0(\Omega;\RR^2)$, then $\nabla u= R(\Ibb + \gamma e_1\otimes e_2)$ for some $R\in SO(2)$ and $\gamma\in L^1(\Omega)$ such that $\partial_1\gamma =0$  (see Remark~\ref{rk:subsets-A}\,(a)).  As in the proof of  Proposition~\ref{prop:Aparallel},
    we take $(u_\eps)_\eps\subset W^{1,1}(\Omega;\R^2)\cap L^1_0(\Omega;\R^2)$  such that  $\nabla u_\eps=R(\Ibb +\frac{\gamma}{\lambda}\mathbbm{1}_{\eps\Ysoft} e_1\otimes e_2)$  for all $\eps$. 
Then, by the Riemann--Lebesgue
lemma, \(u_\epsi \weaklystar\ u\) in
\(BV(\Omega;\RR^2)\) and \(\lim_{\epsi\to0}
E_\epsi(u_\epsi) = E(u)\).

(b) The question whether \eqref{eq:GlimsupEeps}
holds for a larger class than \(\Acal^\parallel\)
is open at this point. We observe that the gradient-based constructions in  Lemma~\ref{lem:existence_admissible1},
Remark~\ref{rmk:alternativeconst}~(i)--(ii), and
Proposition~\ref{prop:SBVinfty}  yield upper bounds on the $\Gamma\text{-}\limsup$, which, however, do not match  the lower bound of  Theorem~\ref{theo:Gamma_convergence_new}.  This  indicates
that,
in general, a more tailored approach
will be necessary. 

(c)  The upper bounds on the $\Gamma\text{-}\limsup$ of $(E_\eps)_\eps$ resulting from  
 Lemma~\ref{lem:existence_admissible1},
Remark~\ref{rmk:alternativeconst}~(i)--(ii), and
Proposition~\ref{prop:SBVinfty} can be quantified. As previously mentioned, the
constructions in Remark~\ref{rmk:alternativeconst}~(iii)  and
Proposition~\ref{prop:Aparallel} are even recovery sequences.
This is not the case for the general construction in Lemma~\ref{lem:existence_admissible1} and for those highlighted in Remark~\ref{rmk:alternativeconst}~(i)--(ii).  In the following, we suppose that \(u\in \Acal \cap SBV_\infty(\Omega;\R^2)\cap L^1_0(\Omega;\RR^2)\) has a single jump as in the statement of Lemma~\ref{lem:existence_admissible1}; i.e.,~
\begin{align*}
u(x)=\mathbbm{1}_{(0,1)\times (0,1)}(x)(R^+(x_2)x+\psi^+(x_2)) + \mathbbm{1}_{(0,1)\times (-1,0)} (x)(R^-(x_2)x+\psi^-(x_2))
\end{align*} with $R^{\pm}\in SO(2)$ and $\psi^\pm \in \R^2$.  Then,
\begin{equation*}
\begin{aligned}
E(u)=\int_0^1 |(R^+ - R^-)e_1 x_1 +  (\psi^+
- \psi^-)|\dd x_1,
\end{aligned}
\end{equation*}
 which can be estimated from above by 
\begin{align}\label{est6}
E(u)\leq  |R^+e_1-R^-e_1| \int_0^1 x_1 \dd{x_1} + |\psi^+-\psi^-|\leq 1 + |\psi^+-\psi^-|.
\end{align}

For the sequence \((u_\epsi)_\epsi\) constructed
in Lemma~\ref{lem:existence_admissible1}  (and Lemma~\ref{prop:SBVinfty}),  we obtain, recalling~\eqref{eq:p+-p-}, that
\begin{equation*}
\begin{aligned}
\lim_{\epsi\to0} E_\epsi(u_\epsi) = |\alpha| + |\beta| + 2 > |\alpha| + |\beta| +1
\geq    E(u).
\end{aligned}
\end{equation*}

\smallskip   

 Regarding the construction of \((u_\epsi)_\epsi\) in Remark~\ref{rmk:alternativeconst}~(i), it follows that 
\begin{equation*}
\begin{aligned}
\lim_{\epsi\to0} E_\epsi(u_\epsi) = |\alpha| + |\beta-1| + 1. 
\end{aligned}
\end{equation*}

This limit is strictly greater than $E(u)$ as we will show next. 
If $|\beta-1|>|\beta| $ (i.e.,~if $\beta<\frac{1}{2}$), this is an immediate consequence of~\eqref{est6}.
For \( \frac{1}{2}\leq \beta <1\), we use that \(\psi^+ - \psi^- = \alpha R^+e_1 + \beta R^-e_1\) 
yields
\begin{equation*}
\begin{aligned}
E(u)\leq \int_0^1|x_1 + \alpha| \dd x_1 +  \int_0^\beta (\beta-
x_1)  \dd x_1 +  \int_\beta^1 (x_1 -\beta)  \dd x_1 \leq 1 +
|\alpha| + \beta(\beta-1) <  1 + |\alpha| + |\beta-1|.
\end{aligned}
\end{equation*}
 If \(\beta \geq 1\), we note that $\lim_{\epsi\to0} E_\epsi(u_\epsi) = |\alpha| + \beta$,
and subdivide the estimate
of \(E(u)\) into three cases. Recalling the assumption \(R^+\not=\pm
R^- \),  we set \(c:=R^+e_1 \cdot R^-e_1 \in (-1,1)\)  to obtain 

\begin{equation*}
\begin{aligned}
E(u) =\int_0^1 \sqrt{(x_1+\alpha)^2 + (\beta - x_1)^2 + 2c(x_1+\alpha)(\beta - x_1) }\dd x_1.
\end{aligned}
\end{equation*}
Then,  we have for \(\alpha \geq 0\) that 
\begin{equation*}
\begin{aligned}
E(u) < \int_0^1 \sqrt{(x_1+\alpha)^2 + (\beta - x_1)^2 + 2(x_1+\alpha)(\beta
- x_1) }\dd x_1 = |\alpha + \beta| \leq  |\alpha| + \beta,
\end{aligned}
\end{equation*}
 for  \(\alpha \leq -1\) that 
\begin{equation*}
\begin{aligned}
E(u) &< \int_0^1 \sqrt{(x_1+\alpha)^2 + (\beta - x_1)^2 - 2(x_1+\alpha)(\beta
- x_1) }\dd x_1 = \int_0^1 (-2x_1-\alpha+\beta) \dd x_1\\ & = -1
-\alpha + \beta <-\alpha + \beta = |\alpha| + \beta, 
\end{aligned}
\end{equation*}
 and for  \(-1<\alpha <0\) that 
\begin{equation*}
\begin{aligned}
E(u) <\int_0^{-\alpha} (-2x_1-\alpha+\beta)\dd x_1 + \int_{-\alpha}^1
|\alpha+ \beta|\dd x_1 = \alpha+\beta +\alpha^2 <-\alpha + \beta = |\alpha| + \beta.
\end{aligned}
\end{equation*}

Summing up, we have shown that in the context of Remark~\ref{rmk:alternativeconst}\,(i),
\begin{equation*}
\begin{aligned}
\lim_{\epsi\to0} E_\epsi(u_\epsi) > E(u).
\end{aligned}
\end{equation*}

Finally, we consider  the sequence \((u_\epsi)_\epsi\)  constructed
in Remark~\ref{rmk:alternativeconst}~(ii).  Then,
\begin{equation*}
\begin{aligned}
\lim_{\epsi\to0} E_\epsi(u_\epsi) =   |\alpha - \iota| + |\beta| + 1, 
\end{aligned}
\end{equation*}
and since \(R^+=R^-\) in this case, 
\begin{equation*}
\begin{aligned}
E(u)= \sqrt{\alpha^2 +\beta^2 + 2\alpha\beta Re_1 \cdot
Se_1}.
\end{aligned}
\end{equation*}

Using the fact that \(Re_1 \cdot
Se_1 \in (-1,1)\), it can be checked that, also  here, 
we have%
\begin{equation*}
\begin{aligned}
\lim_{\epsi\to0} E_\epsi(u_\epsi) > E(u).
\end{aligned}
\end{equation*}

 \end{remark}

\section{ Homogenization of the regularized problem  }\label{sect:regularization}

This section is devoted to the proof of our main $\Gamma$-convergence result,  Theorem~\ref{thm:Gamma_convergence}.
We first provide an alternative characterization of the class
\(\Acal^\parallel\) of \emph{restricted asymptotically admissible deformations} introduced in \eqref{eq:def-Apar}.

\begin{lemma}\label{lem:charApar} 
Let \(\Omega=(0,1) \times (-1,1)\).
Then, \(\Acal^\parallel\) as in~\eqref{eq:def-Apar} admits the representation
\begin{align}\label{eq:def-Aparalt}
  \Acal^{\parallel}  = \{u\in BV(\Omega;\R^2)\!
:\, &\,  \nabla u=R(\Ibb+\gamma  e_1\otimes e_2) \text{ with
$R\in SO(2)$,  $\gamma\in L^1(\Omega)$ such that $\partial_1 \gamma=0$,} \notag\\ 
&  \,  D^su = (\varrho\otimes e_2)|D^su| \text{ with }\varrho\in L^1_{|D^su|}(\Omega;\mathbb{R}^2)
\text{ such that  } \\ &  |\varrho| =1 \text{ and } \varrho || Re_1 \text{ for } |D^su|\text{-a.e.\! in } \Omega\}. \nonumber %
\end{align}
\end{lemma}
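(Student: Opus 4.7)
The plan is to prove both inclusions in \eqref{eq:def-Aparalt} separately, using the explicit formula \eqref{eq:onDu1} for the distributional derivative of functions of the form \eqref{eq:form-u} together with the Du Bois-Reymond-type argument already exploited in the proof of Proposition~\ref{prop:charA}.

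\textbf{Forward inclusion (\(\Acal^\parallel\subset\) right-hand side).}  Fix \(u\in\Acal^\parallel\), so that \(u(x)=Rx+\vartheta(x_2)Re_1+c\) with \(R\in SO(2)\) constant, \(\vartheta\in BV(a_\Omega,b_\Omega)\), and \(c\in\R^2\). I apply \eqref{eq:onDu1} with the constant matrix \(R\) (so \(R'=0\) and \(D^sR=0\)) and \(\psi(x_2):=\vartheta(x_2)Re_1+c\), obtaining
\[
\nabla u = R+\vartheta'(x_2)Re_1\otimes e_2=R(\Ibb+\gamma(x_2)e_1\otimes e_2),\qquad \gamma(x_2):=\vartheta'(x_2),
\]
which depends only on \(x_2\) and hence satisfies \(\partial_1\gamma=0\); moreover, \(\gamma\in L^1(\Omega)\). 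For the singular part, \eqref{eq:onDu1} gives
\[
D^su=\bigl(\Lcal^1\lfloor(c_\Omega,d_\Omega)\boldsymbol\otimes D^s\psi\bigr)\otimes e_2=Re_1\otimes e_2\cdot\bigl(\Lcal^1\lfloor(c_\Omega,d_\Omega)\boldsymbol\otimes D^s\vartheta\bigr).
\]
Writing the one-dimensional polar decomposition \(D^s\vartheta=\sigma\,|D^s\vartheta|\) with \(\sigma\in\{-1,+1\}\) \(|D^s\vartheta|\)-a.e., I set \(\varrho(x):=\sigma(x_2)Re_1\), which satisfies \(|\varrho|=1\) and \(\varrho\,\|\,Re_1\) for \(|D^su|\)-a.e.\ \(x\), and yields the required representation.

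\textbf{Converse inclusion.} Let \(u\in BV(\Omega;\R^2)\) satisfy the conditions on the right-hand side of \eqref{eq:def-Aparalt}. The structural assumption \(D^su=(\varrho\otimes e_2)|D^su|\) implies that \((D^su)e_1=0\), while \((\nabla u)e_1=Re_1\). Consequently,
\[
D_1u=(Du)e_1=Re_1\,\Lcal^2\lfloor\Omega,
\]
and \(D_1(u-Re_1x_1)=0\). Invoking the Du Bois-Reymond argument exactly as in the proof of Proposition~\ref{prop:charA} (see~\cite{idczak}) produces \(\phi\in BV(a_\Omega,b_\Omega;\R^2)\) such that \(u(x)=Re_1x_1+\phi(x_2)\). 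Differentiating and comparing with \(\nabla u=R(\Ibb+\gamma e_1\otimes e_2)\) gives \(\phi'(x_2)=Re_2+\gamma(x_2)Re_1\), where the independence of \(\gamma\) on \(x_1\) is automatic from \(\partial_1\gamma=0\).

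It remains to treat the singular part: on the one hand, \(D^su=(\Lcal^1\lfloor(c_\Omega,d_\Omega)\boldsymbol\otimes D^s\phi)\otimes e_2\), and on the other, \(D^su=(\varrho\otimes e_2)|D^su|\) with \(\varrho\,\|\,Re_1\). Matching the \(e_2\)-columns and using that both measures are translation-invariant in \(x_1\), I obtain \(D^s\phi=\lambda_s Re_1\) for some scalar signed measure \(\lambda_s\) on \((a_\Omega,b_\Omega)\) (concretely, \(\lambda_s=\alpha\,|D^s\phi|\) for the sign \(\alpha\) given by \(\varrho=\alpha Re_1\)). Define \(\vartheta\in BV(a_\Omega,b_\Omega)\) as a primitive of the real-valued measure \(\gamma\,\Lcal^1+\lambda_s\). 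Then
\[
D\phi=\bigl(Re_2+\gamma Re_1\bigr)\Lcal^1+\lambda_s Re_1=Re_2\,\Lcal^1+(D\vartheta)Re_1=D\bigl(x_2Re_2+\vartheta(x_2)Re_1\bigr),
\]
so \(\phi(x_2)=x_2Re_2+\vartheta(x_2)Re_1+c'\) for some \(c'\in\R^2\), whence \(u(x)=Rx+\vartheta(x_2)Re_1+c'\in\Acal^\parallel\).

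The forward inclusion is an immediate computation from \eqref{eq:onDu1}; the main (but still routine) step is the converse, and within it the key structural observation is that the condition \(\varrho\,\|\,Re_1\) exactly encodes that the singular component of \(\phi\) lies in the one-dimensional subspace spanned by \(Re_1\), which is what allows its reconstruction from a single scalar BV-function \(\vartheta\).
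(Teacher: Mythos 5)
Your proof is correct and follows essentially the same route as the paper's: the forward inclusion is the direct computation of $\nabla u$ and $D^su$ via \eqref{eq:onDu1} combined with the polar decomposition of measures, and the converse rests on the Du Bois--Reymond argument already used to establish Proposition~\ref{prop:charA}. The only (minor) difference is that the paper deduces the converse by citing the characterizations \eqref{eq:charAa} and \eqref{charA}, whereas you re-derive the intermediate representation $u(x)=Re_1 x_1+\phi(x_2)$ from scratch; your version has the small virtue of making explicit how the condition $\varrho\,\|\,Re_1$ is used, namely to force $D^s\phi$ into the span of $Re_1$, a step the paper leaves implicit.
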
 

\begin{proof} 
Let \(\tilde \Acal^{\parallel}\) denote the set on the right-hand
side of \eqref{eq:def-Aparalt}. Arguing as in the beginning of
the proof of Proposition~\ref{prop:Aparallel} (precisely, with the notation of ~\eqref{eq:def-Apar}, we set $\gamma(x)=\vartheta_a'(x_2)$ for $x\in \Omega$, and observe that $(D^su)e_2=\mathcal{L}^1\lfloor{(0,1)} \boldsymbol\otimes D^s\vartheta_sRe_1$) 
and  exploiting  the polar
decomposition of measures (cf.~\eqref{eq:polarDj} and \eqref{eq:polarDc})
gives rise to $\Acal^{\parallel}\subset \tilde \Acal^{\parallel}$. 
  Conversely,  the inclusion \(\tilde \Acal^{\parallel}
  \subset \Acal\),  which follows from~\eqref{eq:charAa},  along with~\eqref{charA} yields that 
 $ \tilde \Acal^{\parallel}
 \subset \Acal^{\parallel}$. 
\end{proof}

We are now in a position to prove the $\Gamma$-convergence of the energies  $(E^{\eps}_{\delta})_\eps$ in 
\eqref{eq:def-E-ep-delta} as $\eps\to 0$. 

\begin{proof}[Proof of Theorem~\ref{thm:Gamma_convergence}]
As before  in the proofs of Theorems~\ref{thm:aymrigintro} and~\ref{thm:SBVinfty},  one may assume without loss
of generality that $\Omega=(0,1)\times (-1,1)$. We proceed in three steps.

\medskip 
\noindent\emph{Step~1: Compactness}. Let $(u_\eps)_\eps\subset W^{1,1}(\Omega;\mathbb{R}^2)  \cap L_0^1(\Omega;\R^2)$  be a  sequence  such that 
$E_\eps^{\delta}(u_\eps)\leq C$ for all $\eps>0$. Then,  because  $u_\eps\in \Acal_\eps$ for all $\eps$,
\begin{align}\label{nablaueps4}
\nabla u_\eps= R_\eps(\Ibb+\gamma_\eps e_1\otimes e_2)\in L^1(\Omega;\R^{2\times
2}),
\end{align}
and $\norm{\gamma_\eps}_{L^1(\Omega)}\leq
C$  for every $\eps>0$. Additionally, since  each map $R_\eps$ takes value in the set of proper rotations,  it holds that 
\(\Vert R_\epsi\Vert_{L^\infty(\Omega;\RR^{2\times
2})}^2=  2\)  for
all $\eps>0$. Consequently, 
  along with the Poincar\'e-Wirtinger inequality, 
\begin{align*}
\norm{ u_\eps}_{W^{1,1} (\Omega;\R^{2})}\leq C.
\end{align*} 
We further know that   $\norm{\partial_1 u_\eps}_{W^{1,
p}(\Omega;\R^2)}^p
=\norm{R_\eps e_1}_{W^{1, p}(\Omega;\R^2)}^p \leq  C/\delta$  for any $\eps$. 
Thus, after extracting subsequences
if necessary, one can find $u\in BV(\Omega;\R^2)$,  \(\gamma \in
\Mcal(\Omega)\), and   \(R\in
W^{1,p}(\Omega;\R^{2\times 2}) \)
such that  
\begin{align}
&u_\eps \weaklystar u \quad \text{ in $BV(\Omega;\R^2),$} \label{eq:uep-bv}\\
& \gamma_\eps \Lcal^{2}\weaklystar \gamma\quad\text{ in $\Mcal(\Omega)$,
} \notag
\\
&  R_\eps\weakly R\quad \text{  in $W^{1,p}(\Omega;\R^{2\times 2})$}.\label{eq:conv-w1p}
\end{align}

Recalling the compact embedding $W^{1,
p}(\Omega)\hookrightarrow\hookrightarrow C^{0, \alpha}(\overline{\Omega})$
for some $0<\alpha<1-\frac{2}{p}$, it follows even that $R\in W^{1,p}(\Omega;SO(2))\cap C^{0, \alpha}(\overline{\Omega};  \R^{2\times 2})$ and 
\begin{align}
R_\eps\to R\quad \text{ in $L^\infty(\Omega;\R^{2\times 2})$.}\label{strong_convergence}
\end{align}

  As a consequence of Proposition~\ref{con:compactness}, it holds that $u\in \Acal$. From  Proposition~\ref{prop:charA} and Alberti's rank one theorem (cf.~Section \ref{sec:prel}), we can further  infer  that  $R\in SO(2)$, $\gamma\in L^1(\Omega)$ with $\partial_1 \gamma=0$, and  that  $Du$ satisfies  
\begin{equation}\label{eq:almostAM}
\begin{aligned}
\nabla u=R(\Ibb+\gamma
e_1\otimes e_2)\quad \text{ and }\quad  D^su=(\varrho\otimes
e_2 )|D^su|,  
\end{aligned}
\end{equation}
 where $\varrho \in L_{|D^su|}(\Omega;\R^2)$ with  \(|\varrho|=1\) for \(|D^su|\)-a.e.~in $\Omega$.  To conclude that $u\in \Acal^\parallel$, in view of Lemma~\ref{lem:charApar}, it remains to   show  that
\begin{align}\label{Rconst}
 \varrho || Re_1\quad \text{\(|D^su|\)-a.e.~in $\Omega$.}
\end{align}

 To prove~\eqref{Rconst},  we first observe  that  for every $\eps$, 
the identity $(\nabla u_{\eps})e_2=R_{\eps}e_2+\gamma_{\eps}R_{\eps}e_1$, which follows from $u_\eps\in \Acal_\eps$,
yields 
\begin{align}\label{eq89}
\int_{\Omega}[(\nabla u_{\eps})e_2\cdot R_{\eps}e_2-1]\varphi
\dd{x}=0
\end{align}
for  all  $\varphi\in C^{\infty}_c(\Omega)$. Thus, 
by \eqref{eq:uep-bv}  and \eqref{strong_convergence} in combination with a weak-strong convergence argument, taking the limit $\eps\to 0$
in~\eqref{eq89} leads to 
\begin{align*}
\int_{\Omega}\varphi \dd{x} = \int_{\Omega}\varphi Re_2\cdot
\dd{((Du)e_2)} = \int_{\Omega}\varphi Re_2\cdot (\nabla u)e_2
\dd{x}+\int_{\Omega}
\varphi Re_2\cdot \dd{((D^su)e_2)}
\end{align*}
for every $\varphi\in C^{\infty}_c(\Omega)$.  Next, we plug in 
 the identities $(\nabla u)e_2=R e_2 +\gamma R e_1$ and \((D^su)e_2 = \varrho|D^s
u|\) (see \eqref{eq:almostAM}) to derive
that \begin{align*}
0=\int_{\Omega} \varphi Re_2\cdot \dd{((D^su)e_2)}=\int_{\Omega}\ffi
Re_2\cdot \varrho \dd{|D^su|}
\end{align*}
for every $\varphi\in C^{\infty}_c(\Omega)$,  which  completes the
proof of \eqref{Rconst}.

\medskip 
\noindent\emph{Step~2: Lower bound}.
Let \((u_\epsi)\subset L^1_0(\Omega;\RR^2)\)
and  \(u\in L^1_0(\Omega;\RR^2)\)  
be  such that $u_\eps\to u$ in $L^1(\Omega;\R^2)$.
We want to  show  that 
\begin{equation}
\label{eq:lowerbounddelta}
\begin{aligned}
E^\delta(u) \leq \liminf_{\eps \to 0} E^\delta_\eps(
u_\eps).
\end{aligned}
\end{equation}

  To prove \eqref{eq:lowerbounddelta}, we proceed as in the proof of \eqref{eq:lowerbounde1}, observing  in addition  that 
  \begin{align*}\liminf_{\epsi\to0} \delta\norm{\partial_1 u_\epsi}_{W^{1,p}(\Omega;\R^2)}^p = \liminf_{\epsi\to0} \delta\norm{R_\eps e_1}_{W^{1, p}(\Omega;\R^2)}^p \geq \delta\Vert Re_1\Vert^p_{W^{1,p}(\Omega;\R^2)} =   \delta |\Omega|
  \end{align*}
due to~\eqref{nablaueps4}  and \eqref{eq:conv-w1p} with \(R\in  SO(2)\).

\medskip 
\noindent\emph{Step~3: Upper bound}.
Let   \(u\in L^1_0(\Omega;\RR^2)\cap \Acal^\parallel\). We want to show that there is a sequence  \((u_\epsi)\subset L^1_0(\Omega;\RR^2)\)
  such that $u_\eps\to u$ in $L^1(\Omega;\R^2)$, and
\begin{equation}
\label{eq:upperbounddelta}
\begin{aligned}
E^\delta(u) \geq \limsup_{\eps \to 0} E^\delta_\eps(
u_\eps).
\end{aligned}
\end{equation}

Let $(u_{\eps})_{\eps}\subset W^{1,1}(\Omega;\R^2)\cap L_0^1(\Omega;\R^2)$ be the sequence constructed in the proof of  Proposition~\ref{prop:Aparallel},  that is, 
$u_\eps \in \Acal_\eps$ for every $\eps$ with
$$\nabla u_\eps  (x) = R\Big(\Ibb + \Big(\frac{\vartheta_a'  (x_2) }{\lambda} +\vartheta_\eps'(x_2)\Big) \mathbbm{1}_{\eps\Ysoft}(x) e_1\otimes e_2\Big),$$
 where $(\vartheta_{\eps})_{\eps}\subset W^{1,\infty}(-1,1)$  satisfies 
$$\lim_{\epsi\to0}\int_{-1}^1|\vartheta_{\eps}'|\,\dd{x_2} = |D^s\vartheta_s|(-1,1)=|D^s u|(\Omega),$$
and $u_\eps\weaklystar u$ in $BV(\Omega;\R^2)$. 
Recalling that  \(\vartheta'=\vartheta_a' + \vartheta_s'=\vartheta_a'\),
we have  
\begin{align*}
\limsup_{\eps\to 0} E_\eps^{\delta}(u_\eps)& \leq \lim_{\eps\to0}\bigg( \int_{\Omega} \frac{|\vartheta_a' (x_2)|}{\lambda} \mathbbm{1}_{\eps\Ysoft}(x)\dd{x} + \int_{-1}^1|\vartheta_{\eps}'(x_2)| \dd{x_2} + \delta \norm{Re_1}_{W^{1,p}(\Omega;\R^2)}^p  \bigg)  \\ &=  \int_{\Omega}|\vartheta'  (x_2)|\dd{x} + |D^s u|(\Omega) + \delta|\Omega|=E^\delta(u),
\end{align*} 
which proves \eqref{eq:upperbounddelta} and completes the proof of the theorem. 
\end{proof}

\begin{remark}[On compensated compacteness]\label{rk:cc}
We point out that if $u_\eps\in \Acal_\eps$, with  $\nabla u_{\eps}=R_{\eps}(\Ibb+\gamma_{\eps}e_1\otimes e_2)$  for $R_\eps\in L^\infty(\Omega;SO(2))$ and $\gamma_\eps\in L^1(\Omega)$ with $\gamma_\eps=0$ on $\eps\Yrig\cap \Omega$,  is such that $u_\eps\weaklystar u$ in $BV(\Omega;\R^2)$, and if in addition, 
\begin{align*}
R_\eps\to R \quad \text{in $C(\Omega;\R^{2\times 2})$,} 
\end{align*}
then  a weak-strong convergence argument implies  that 
\begin{align*}
\gamma_\eps  \Lcal^2\lfloor \Omega = \big[(\nabla u_\eps) e_2 \cdot R_\eps e_1\bigr] \Lcal^2\lfloor \Omega  \weaklystar (Du)e_2\cdot Re_1 \qquad \text{in $\Mcal(\Omega)$.}
\end{align*}

However, if continuity and uniform convergence of $R_\eps$ fail, the  limit representation above  may no longer be true in general, even if $R\in C(\Omega;SO(2))$.  To see this, let us consider the  basic  construction in Remark~\ref{rmk:alternativeconst}~(ii). 
In this case, 
\begin{align}
\label{eq:1st-quant}
\gamma_\eps  \Lcal^2\lfloor \Omega \weaklystar (\alpha +  \beta )\Hcal^1\lfloor\big((0,1)\times \{0\}\big) \qquad \text{in $\Mcal(\Omega)$,}
\end{align}
 whereas 
\begin{equation}
\label{eq:2nd-quant}(Du)e_2\cdot Re_1=[(\psi^+-\psi^-)\cdot Re_1]\Hcal^1\lfloor \big((0,1)\times \{0\}\big).
\end{equation} 
Recalling that $\psi^+-\psi^-=\alpha  Re_1+\beta  Se_1 $, the quantities in \eqref{eq:1st-quant} and \eqref{eq:2nd-quant} can only match if $Re_1 || Se_1$,  which  contradicts  the assumption  that $Re_1$ and $Se_1$ are linearly independent.

The role of the higher-order regularization in \eqref{eq:def-E-ep-delta} is exactly that  it helps overcome the issue discussed above. In fact, it guarantees the desired compactness properties for sequences of deformations with equibounded energies.
\end{remark}

 To conclude, we present an alternative construction for the recovery sequence in Step~3 of the proof  of  Theorem~\ref{thm:Gamma_convergence}.

\begin{proof}[Alternative proof of Theorem~\ref{thm:Gamma_convergence}]
As before, we may assume without loss
of generality that $\Omega=(0,1)\times (-1,1)$. Moreover, the compactness property and lower bound can be studied exactly
as in the proof of Theorem~\ref{thm:Gamma_convergence} above.

We are then left to show that given   \(u\in L^1_0(\Omega;\RR^2)\cap \Acal^\parallel\),   there exists a sequence  \((u_\epsi)_\epsi\subset L^1_0(\Omega;\RR^2)\)
  satisfying $u_\eps\to u$ in $L^1(\Omega;\R^2)$  and 
   \eqref{eq:upperbounddelta}.
We will proceed in three steps, building  up  complexity.

\medskip
\noindent
\textit{Step~1.} We assume  first  that $u\in L^1_0(\Omega;\R^2)\cap \Acal^{\parallel}$ is an \(SBV\)-function
with a single, constant jump line at $x_2=0$. 

This case can be treated as highlighted in Remark~\ref{rmk:alternativeconst}~(iii).
Let $R\in SO(2)$, $\gamma\in  L^1(\Omega)$ with $\partial_1\gamma=0$, and  $\psi^+,\, \psi^-\in \R^2$   with  $(\psi^+ - \psi^-)|| Re_1$ be such that
\begin{align*}
Du = R(\Ibb + \gamma e_1\otimes e_2) \Lcal^2\lfloor\Omega + (\psi^+ - \psi^-)\otimes e_2\Hcal^1\lfloor ((0,1)\times \{0\}).
\end{align*}
Note that setting 
\({\iota:= {\rm sign} ((\psi^+
-\psi^-)\cdot
Re_1) \in \{\pm1\}}\), we have   \(\psi^+ - \psi^- =\iota |\psi^+ - \psi^-| Re_1\) and  
\begin{align*}
|Du|(\Omega) = |D^au|(\Omega)| + |D^su|(\Omega)| = |D^a u|(\Omega) + |D^ju|(\Omega) = \int_\Omega |R(\Ibb + \gamma e_1\otimes e_2)| \dd{x} + |\psi^+-\psi^-|.
\end{align*} 

For each \(\epsi>0\), set
$\tau_\eps:=
\iota\frac{|D^ju|(\Omega)}{\lambda\eps} =\iota \frac{|\psi^+-\psi^-|}{\lambda\eps}$. Arguing as, for instance, in the proof of Lemma~\ref{lem:existence_admissible1}, we can find \(u_\epsi
\in L^1_0(\Omega;\R^2)\cap  \Acal_\eps\)  such that
\begin{align*}
\nabla u_\eps = \begin{cases}
R(\Ibb+\tau_\eps e_1\otimes e_2)&\text{if $x\in (0,1)\times (0, \lambda\eps)$,}\\
R(\Ibb + \frac{\gamma}{\lambda} \mathbbm{1}_{\eps\Ysoft\cap \Omega}e_1\otimes e_2) & \text{otherwise,} 
\end{cases}
\end{align*}
and  $u_\eps\weaklystar u$ in $BV(\Omega;\R^{2})$. 
 Next, we show that this construction
yields convergence of energies. Indeed, we have 
\begin{align*}
\lim_{\eps\to 0} E_\eps^{\delta}(u_\eps) & = \lim_{\eps\to 0} \bigg(\int_{(0,1)\times (0, \lambda\eps)}|\tau_\eps| \dd{x} + \int_{\Omega\setminus (0,1)\times (0, \lambda\eps)}\Big|\frac{\gamma}{\lambda}\Big|\mathbbm{1}_{\eps\Ysoft} \dd x +  \delta\norm{Re_1}_{W^{1,p}(\Omega;\R^2)}^p \bigg) \\ &= |\psi^+
-\psi^-| + \int_{\Omega}|\gamma|\dd{x+ \delta|\Omega|} =  |D^su|(\Omega)
+ \int_{\Omega} |\gamma|\dd{x}+ \delta|\Omega|= E^{\delta}(u).
\end{align*}

\noindent
\textit{Step~2.} We assume  next  that $u\in L^1_0(\Omega;\R^2)\cap \Acal^{\parallel}$ is an \(SBV\)-function
with a  finite number
of horizontal  jump lines and with constant upper and
lower approximate limits on each jump line. 

In this setting, \(\nabla u = R(\Ibb+\gamma
e_1\otimes e_2)\) with $R\in SO(2)$ and \(\gamma\in L^1(\Omega)\)  with $\partial_1 \gamma=0$, 
\(J_u=\bigcup_{i=1}^\ell  (0,1)\times \{a_i\} \) with \(\ell\in\NN\)
and \(-1< a_1 <a_2 < \cdots < a_\ell <1\),
\(D^ju= \sum_{i=1}^\ell(\psi^+_i - \psi^-_i)\otimes e_2 \Hcal^1\lfloor((0,1)\times \{a_i\}) \) with   \(\psi^\pm_i\in\RR^2\) 
 satisfying  \((\psi^+_i - \psi^-_i) || Re_1\) for all \(i\in\{1,...,\ell\}\),
and \(D^c u =0\).
Hence, 
\begin{align}\label{eq:DuSBVfinitejumps}
Du = R(\Ibb + \gamma e_1\otimes e_2) \Lcal^2\lfloor\Omega +  \sum_{i=1}^\ell(\psi^+_i
- \psi^-_i)\otimes e_2 \Hcal^1\lfloor((0,1)\times \{a_i\}) 
\end{align} 
and 
\begin{equation*}
\begin{aligned}
  |D^su|(\Omega)  =   \sum_{i=1}^\ell|\psi^+_i      - \psi^-_i|.
\end{aligned}
\end{equation*} 

As in the proof of Proposition~\ref{prop:SBVinfty}, the idea is to  perform  a construction similar to
that in Step~1 around each jump line but accounting for the possibility
that one or more of the jump lines may not intersect \(\eps\Ysoft\cap
\Omega\).

Fix \(i\in \{1, ..., \ell\} \) and \(\epsi>0\), and  let \(\kappa_\epsi^i
\in \ZZ\)  be the  integer such that \(a_i \in \epsi[ \kappa_\epsi^i, \kappa_\epsi^i+1)\).
 Since   \(a_i\not = a_j\) if \(i\not= j\), we may assume that the sets 
  $\{\epsi[ \kappa_\epsi^i, \kappa_\epsi^i+1)\}_{i}$  are pairwise disjoint for  all \(\epsi>0\) (this is true for all \(\epsi>0\)
sufficiently small).
Then,  we take $u_\eps\in L_0^1(\Omega;\R^2)  \cap \Acal_\eps$ 
such that  
\begin{align*}
\nabla u_\eps = \begin{cases}
R(\Ibb+\tau_\eps^i e_1\otimes e_2)&\text{in $ (0,1)\times \epsi(
\kappa_\epsi^i, \kappa_\epsi^i+\lambda)$,}\\
R(\Ibb + \frac{\gamma}{\lambda} \mathbbm{1}_{\eps\Ysoft\cap \Omega}e_1\otimes
e_2) & \text{otherwise,} 
\end{cases}
\end{align*}
where  $\tau_\eps^i={\iota_i} \frac{|\psi^+_i
- \psi^-_i|}{\lambda\eps}$ with
\({\iota_i:= {\rm sign} ((\psi^+_i
-\psi^-_i)\cdot
Re_1) \in \{\pm1\}}\).  As  in the proof of Proposition~\ref{prop:SBVinfty},
we  obtain that  
\begin{equation}
\label{eq:DuetoDu}
\begin{aligned}
\lim_{\eps\to 0} \int_{\Omega} \nabla  u_\eps \varphi \dd{x}
=\sum_{i=1}^\ell  \int_0^1 {\iota_i}|\psi^+_i
- \psi^-_i|( Re_1\otimes
e_2)\ffi(x_1,a_i)\dd
x_1  +
\int_\Omega R(\Ibb+\gamma e_1\otimes e_2) \varphi\dd{x}
\end{aligned}
\end{equation}
for all $\varphi\in C_0(\Omega)$. Recalling \eqref{eq:DuSBVfinitejumps} and the equalities \(\psi^+_i
- \psi^-_i={\iota_i}|\psi^+_i - \psi^-_i|
Re_1\)  for \(i\in\{1,...,\ell\}\),
\eqref{eq:DuetoDu} shows that   $Du_\eps\weaklystar Du$ in $\Mcal(\Omega;\R^{2\times
2})$. Hence,   $u_\eps\weaklystar u$ in $BV(\Omega;\R^{2})$. 

Finally, proceeding exactly as in Step~1, we conclude that this
construction also yields  convergence of
the energies. This ends Step~2.

\textit{Step~3.} We consider  now  the general case $u\in L^1_0(\Omega;\R^2)\cap \Acal^{\parallel}$.

 Similarly to  the beginning of the proof
of Proposition~\ref{prop:Aparallel}  (see~\eqref{vw}), 
 we can write
\begin{align*}
u(x) =x_1 Re_1 + \phi_a(x_2) + \phi_s(x_2), \quad x\in \Omega,
\end{align*} 
where \(\phi_a(x_2):=x_2 Re_2 + \vartheta_a(x_2)Re_1
+ c \) and \(\phi_s(x_2) := \vartheta_s(x_2)
Re_1.\) Note that \(\phi_a\in W^{1,1}(-1,1;\RR^2)\)
and \(\phi_s\in BV(-1,1;\R^2)\)
is the
sum  of  a jump
function and a Cantor function; in particular,    \(\vartheta'=\vartheta_a'\) and
\(D\phi_s = D^s\phi_s\) (see \eqref{1dsplitting}). Moreover,
\begin{equation}\label{eq:DsuDsphi}
\begin{aligned}
&\nabla u =Re_1\otimes e_1 + \nabla \phi_a \otimes e_2  = R(\Ibb +\vartheta'_a e_1\otimes
e_2)= R(\Ibb
+\vartheta' e_1\otimes
e_2), 
\\
& D^su  = \Lcal^1\lfloor(0,1){ \boldsymbol\otimes }  D\phi_s  
 ,\\
& |D^su|\lfloor\Omega  = \Lcal^1\lfloor(0,1){ \boldsymbol\otimes }
 |D\phi_s|.  
\end{aligned}
\end{equation}
By Lemma~\ref{lem:charApar}, there exists \(\varrho\in
L^1_{|D^su|}(-1,1;\mathbb{R}^2)\) with
\(|\varrho| =1\) such that 
\begin{equation}
\begin{aligned}\label{eq:Dsuvarrho}
D^su =(\varrho\otimes e_2)|D^s u| \enspace \text{
and } \enspace \varrho=(\varrho\cdot
Re_1)
Re_1.
\end{aligned}
\end{equation}
Let \(\varrho_h \in C^\infty([-1,1])\)
be such that 
\begin{align}\label{rhoh}
\lim_{h\to\infty}\int_\Omega |\varrho_h(x_2) - \varrho(x_2)|\dd |D^su|(x)
=0.
\end{align}  
Since  \(|\varrho|=1\), we  can choose  such a sequence so that \(|\varrho_h|\leq
1\).

 Due to  the properties of good representatives (see \cite[(3.24)]{AFP00})
and  \cite[Lemma~3.2]{CrDe11}, for each \(n\in\N\),  there exists
a piecewise constant function \(
\phi_n \in  BV(-1,1;\R^2)\), of the form 
\begin{equation*}
\begin{aligned}
 \phi_n  = \sum_{i=0}^{\ell_n } b_i^n  \chi_{A_i^n},  
\end{aligned}
\end{equation*}
where \(\ell_n\in\NN\), \((b_i^n)_{i=0}^{\ell_n}\subset
\RR^2\), and \((A_i^n)_{i=0}^{\ell_n}\) is a partition of \((-1,1)\)
 into  intervals with 
 \(\sup
A^n_i
= \inf A^n_{i+1}\), satisfying
\begin{align}
& J_{\phi_n}=\bigcup_{i=1}^{\ell_n} \{a_i^n\} \text{ with
} a_i^n:=\sup A^n_{i-1},\nonumber\\
& \lim_{n\to\infty} \Vert \phi_n -  \phi_s\Vert_{L^1(-1,1;\RR^2)}
= 0, \label{eq:vntobarv}\\
& \lim_{n\to\infty} |D\phi_n| (-1,1)= \lim_{n\to\infty} |D^j\phi_n|
(-1,1) = |D  \phi_s|(-1,1) =|D^su|(\Omega). \label{eq:DvntoDbarv}
\end{align}
 Indeed,  \eqref{eq:vntobarv} and \eqref{eq:DvntoDbarv} mean
that \((\phi_n)_{n\in\NN}\) converges strictly to \( \phi_s\)
in
\(BV(-1,1;\R^2)\), which implies that 
\begin{equation}
\label{eq:strctconv}
\begin{aligned}
|D\phi_n| \weaklystar |D
\phi_s|\quad  \text{ in } \Mcal(-1,1),
\end{aligned}
\end{equation}
see \cite[Proposition~3.5]{AFP00}.

Finally, for \(n\in\NN\), we define
\begin{equation*}
\begin{aligned}
 u_n(x):= x_1 Re_1 + \phi_a(x_2) + \phi_n(x_2) +c_n, \enspace
x\in\Omega, \end{aligned}
\end{equation*}
where \(c_n\in \RR^2\) are constants chosen  so  that \(\int_\Omega
u_n\dd x =0\). Note that \(c_n\to0\) as \(n\to\infty\) by \eqref{eq:vntobarv}.
Moreover, for each \(n\in\NN\),  the map  \(u_n \in L^1_0(\Omega;\R^2)\)
  has the same structure as in
Step~2 apart from the condition \((u_n^+ - u_n^-) || Re_1\)  on $J_{u_n}$,  which a priori is not  satisfied.  
Choosing \(\iota_i^n:=\varrho_h(a_i^n)\cdot
Re_1\), we can invoke Step~2 up to, and including, \eqref{eq:DuetoDu}
  to construct  a  sequence \((u^{n,h}_\epsi)_{\epsi}\subset
L^1_0(\Omega;\R^2)\cap W^{1,1}(\Omega;\RR^2)\)  that satisfies  for all \(\ffi\in C_0(\Omega)\),
\begin{equation}
\label{eq:almstwc}
\begin{aligned}
 \lim_{\eps\to 0} \int_{\Omega} \nabla  u_\eps^{n,h} \varphi
\dd{x}
&=\sum_{i=1}^{\ell_n}  \int_0^1 
{ (\varrho_h(a_i^n)\cdot Re_1) }|b^n_{i}-b^n_{i-1}|(
Re_1\otimes
e_2)\ffi(x_1,a_i^n)\dd
x_1  \\
&\qquad+
\int_\Omega R(\Ibb+  \vartheta'_a(x_2) e_1\otimes e_2) \varphi\dd{x}.
\end{aligned}
\end{equation}
 We conclude from  ~\eqref{eq:DsuDsphi}, \eqref{eq:Dsuvarrho},  \eqref{rhoh},  \eqref{eq:vntobarv},  \eqref{eq:strctconv},
and the
 Lebesgue dominated convergence theorem that
\begin{equation}
\label{eq:DuentoDu}
\begin{aligned}
&{\lim_{h\to\infty}}\lim_{n\to\infty}
\sum_{i=1}^{\ell_n}  \int_0^1
{ (\varrho_h(a_i^n)\cdot
Re_1)}|b^n_{i}-b^n_{i-1}|( Re_1\otimes
e_2)\ffi(x_1,a_i^n)\dd
x_1\\
&\quad={\lim_{h\to\infty}} \lim_{n\to\infty}\int_0^1\int_{-1}^{1}
{(\varrho_h(x_2)\cdot
Re_1)}(
Re_1\otimes
e_2)\ffi(x_1,x_2) \dd |D\phi_n|(x_2)
\dd x_1\\
&\quad={\lim_{h\to\infty}} \int_0^1\int_{-1}^{1}
{(\varrho_h(x_2)\cdot
Re_1)}(
Re_1\otimes
e_2)\ffi(x_1,x_2) \dd |D\phi_s|(x_2)
\dd x_1\\&\quad= \int_\Omega
{(\varrho(x_2)\cdot
Re_1)}(Re_1\otimes
e_2)\ffi\dd |D^s u|= \int_\Omega
{(\varrho(x_2)}\otimes
e_2)\ffi\dd |D^s u|= \int_{\Omega} \varphi \dd{D^su.}\\
\end{aligned}
\end{equation}

Recalling that \(|\varrho_h(a_i^n)\cdot
Re_1|\leq 1\), we can further  argue as in Steps~1 and ~2
 regarding the convergence
of the energies
to get 
\begin{align}\label{eq:Euepsin}
{\limsup_{\epsi\to 0}} \,E_\epsi^\delta
(u_\epsi^{n,h}) 
&  \leq  E^\delta(u_n) = \int_{\Omega}
|\vartheta'_a  (x_2) |\dd{x}+ |D^s\phi_n|(-1,1)+\delta|\Omega| \\ &= \int_{\Omega}
|\vartheta'  (x_2) |\dd{x}+ |D^j\phi_n|(-1,1)
+\delta|\Omega|. 
\end{align}

Letting \(n\to\infty\)  and $h\to \infty$  in \eqref{eq:almstwc} and \eqref{eq:Euepsin},
from \eqref{eq:DuentoDu}, \eqref{eq:DvntoDbarv}, and  
\eqref{eq:DsuDsphi}, we conclude that for all \(\ffi\in C_0(\Omega)\), %
\begin{align}
&{\lim_{h\to\infty}}\lim_{n\to\infty}
 \lim_{\epsi\to 0} \int_{\Omega} \nabla  u_\eps^{n,h} \varphi
\dd{x}= \int_{\Omega} \varphi \dd{Du}, \label{eq:limnepsi1}\\
& {\limsup_{h\to\infty} \limsup_{n\to\infty}
\lim_{\epsi\to 0}} E_\epsi^{\delta} (u_\epsi^{n,h}) \leq
 \int_{\Omega}
|\vartheta'  (x_2) |\dd{x}+ |D^s u|(\Omega) +\delta|\Omega|= E^{\delta}(u). \label{eq:limnepsi2}
\end{align}

 Owing to  the separability of \(C_0(\Omega) \)  and 
\eqref{eq:limnepsi1}--\eqref{eq:limnepsi2}, we
can use a diagonalization argument as that in \cite[proof of
Proposition~1.11 (p.449)]{FeFo120}
to find sequences   {\( (h_\epsi)_\epsi\)
 and} \( (n_\epsi)_\epsi\)  such that \({h_\epsi},\,n_\epsi\to\infty\)
as \(\epsi \to 0\) and \(\tilde u_\epsi:= u_{\epsi}^{n_\epsi,
 {h_\epsi}}
\in  L_0^1(\Omega;\R^2)\cap  W^{1,1}(\Omega;\RR^2) \)  has all the desired properties. 
\end{proof}

\subsection*{Acknowledgements} 
The work of Elisa Davoli has been funded by the Austrian Science Fund (FWF) project F65 ``Taming complexity in partial differential systems".   Carolin Kreisbeck gratefully acknowledges the support by a Westerdijk Fellowship from Utrecht University.  The research of Elisa Davoli and Carolin Kreisbeck  was  supported by the Mathematisches Forschungsinstitut Oberwolfach through the program ``Research in Pairs'' in 2017. The hospitality of King Abdullah University of Science and Technology, Utrecht University, and of the University of Vienna is acknowledged. All authors are thankful to the Erwin Schr\"odinger Institute in Vienna, where part of this work  was  developed during the workshop ``New trends in the variational modeling of failure phenomena". 


\bibliographystyle{abbrv}
\bibliography{Homogenization20Nov}
\end{document}